\newtheorem{defi}{\bf D\scriptsize EFINITION \normalsize}
\newtheorem{theorem}[defi]{\bf T\scriptsize HEOREM \normalsize}
\newtheorem{lm}[defi]{\bf L\scriptsize EMMA \normalsize}
\newtheorem{dk}[defi]{\bf C\scriptsize OROLLARY \normalsize}
\newtheorem{rem}[defi]{\bf R\scriptsize EMARK \normalsize}
\newtheorem{exa}[defi]{\bf E\scriptsize XAMPLE \normalsize}
\newtheorem{pro}{\bf P\scriptsize ROBLEM \normalsize}
\newtheorem{prop}[defi]{\bf P\scriptsize ROPOSITION \normalsize}
\newtheorem{no}{\bf N\scriptsize OTE \normalsize}
\def\kopr{\hfill\raisebox{3pt}{\framebox{$\star$}}}
\newenvironment{example}{\begin{exa}\rm}{$\kopr$\end{exa}}
\newenvironment{remark}{\begin{rem}\rm}{$\kopr$\end{rem}}
\newenvironment{definition}{\begin{defi}\rm}{\end{defi}}
\newenvironment{lemma}{\begin{lm}\it}{\end{lm}}
\newenvironment{corollary}{\begin{dk}\it}{\end{dk}}
\newenvironment{proposition}{\begin{prop}\it}{\end{prop}}
\newcommand{\nadsebou}[2]{\begin{array}{c} #1 \\ #2 \end{array}}
\newcommand{\zav}[1]{\left( #1 \right)}
\newcommand{\szav}[1]{\left\{ #1 \right\}}
\newcommand{\imag}{{\rm i}}
\newcommand{\abs}[1]{\left| #1 \right|}
\renewcommand\Re{\operatorname{Re}}
\renewcommand\Im{\operatorname{Im}}
\newcommand{\deter}[1]{\det\zav{#1}}
\newcommand{\tr}{{\rm tr}\,}
\newcommand{\trunc}[1]{\left\lfloor #1 \right\rfloor}
\begin{document}
\title{Coval description of the boundary of a numerical range and the secondary values of a matrix}
 \author{Petr Blaschke}
 \thanks{}
\address{ Mathematical Institute, Silesian University in Opava, Na Rybnicku 1, 746 01 Opava, Czech Republic}
\keywords{Numerical range; matrix; plane curves, secondary numbers; Kippenhahn curve; ovals;covals}
\subjclass[2010]{	15A60,47A12, 53A04}
\email{Petr.Blaschke@math.slu.cz}
\begin{abstract}
The boundary of a numerical range of a finite matrix is always a nice curve (algebraic, closed and simple), but the equation it satisfies is often very complicated. We will show that, furthermore, there is no hope of describing these curves in terms of distances from the eigenvalues -- as the dimension~2, where the numerical range is just an ellipse, would suggest. But, as we will show, there is a remarkably simple ``coval'' description in terms of distances to \textit{tangent lines}. Provided that one measures these distances not only to the eigenvalues but also to additional points, the most important of which are \textit{secondary values} -- which we will define and describe their algebraic and geometric properties.
\end{abstract}
\maketitle

\section{Introduction}
For a $n$-dimensional square complex matrix $A\in \mathbb{C}^{n\times n}$ its \textit{numerical range} $W(A)\subset \mathbb{C}$ is the set
$$
W(A):=\szav{\frac{u^\star A u}{u^\star u};\ u\in \mathbb{C}^n\setminus \szav{0}},
$$
which is invariant under unitary transformations, i.e.
$$
W(A)=W(U^\star A U),\qquad U^\star U=UU^\star =I,
$$  
and contains the eigenvalues
$$
\sigma(A)\subset W(A).
$$
The result of the famous Hausdorff-Toeplitz theorem states that $W(A)$ is always convex, and therefore its boundary $\partial W(A)$ is a simple closed curve. It is these curves that we are interested in. See Figure~\ref{gallery} for a small gallery.
\begin{figure}[ht] 
\centering 
\includegraphics[scale=0.5,trim=1cm 4.0cm 1cm 5cm,clip]{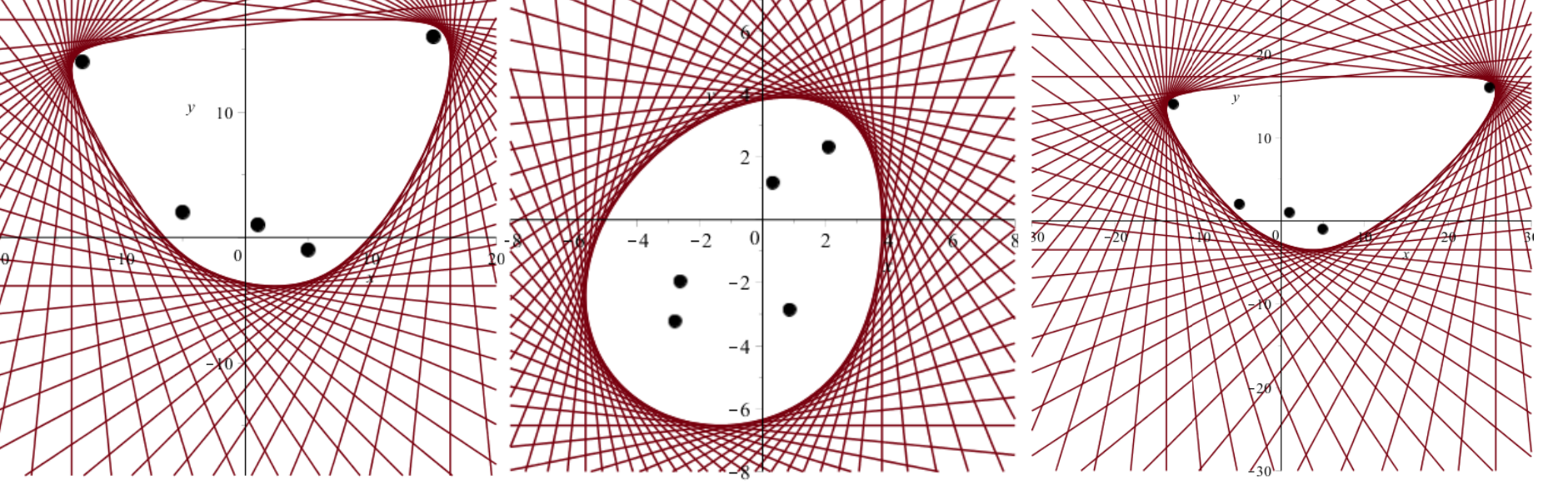}
\caption{Gallery of numerical ranges for various matrices. Here, the set is depicted as a locus of lines. The dots represents the eigenvalues.}
\label{gallery}
\end{figure}

Despite the elegant beauty of these curves, there is no simple algebraic description for them. It is known that $W(A)$ is the convex hull of an algebraic curve $C(A)$ called Kippenhahn curve of the \textit{class} number $n$ and the degree $\leq n(n-1)$. But this algebraic curve can be quite complicated. 

For instance, consider
$$
A:=\left( \begin {array}{ccccc} 1+i&5&-4&50&-5+i\\ \noalign{\medskip}0&5
-i&2-6\,i&5&25\\ \noalign{\medskip}0&0&-13+14\,i&-1+i&14
\\ \noalign{\medskip}0&0&0&15+16\,i&36\\ \noalign{\medskip}0&0&0&0&-5+
2\,i\end {array} \right).
$$
It can be shown that the corresponding Kippenhahn curve $C(A)$ is given by the following polynomial of degree 20
\begin{align*}
C(A)&=4696285754466050602115637473045464\,{x}^{20}+
9517781493448847593551172849185272\,{x}^{19}y\\
&+49331785388107432468516318314527096\,{x}^{18}{y}^{2}+
71723836435184918473021867224615504\,{x}^{17}{y}^{3}\\
&+\dots -53038022858170458767920887262741022050829752284474220\,x\\
&-137173718629830309296618124748301692267121186443002464\,y\\
&+85877448668995016962695243211402395052991126935768556,
\end{align*}
where the three dots represent an additional 220 terms. It is clear that a more efficient description is needed.
In dimension 2 there is a very simple one due to Toeplitz: 
\begin{proposition}\label{n2prop}(\cite{Toeplitz})
The boundary of a numerical range for $2\times 2$ matrix is an ellipse with foci located at the eigenvalues of $A$, i.e. it holds
\begin{equation}\label{n2case}
\forall z\in\partial W(A):\qquad |z-a|+|z-b|=C,
\end{equation}
where $a,b$ are eigenvalues of $A$ and $C>0$ is some constant.
\end{proposition}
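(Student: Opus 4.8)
The plan is to normalize $A$ using the two invariances recorded in the introduction, then describe $\partial W(A)$ through its supporting lines (which also fits the tangent-line, ``coval'' spirit of the paper), and finally recognize the resulting support function as that of an ellipse whose foci fall exactly on the eigenvalues. By unitary invariance together with Schur triangularization, $A$ is unitarily similar to an upper-triangular matrix; and since $W(A+\lambda I)=W(A)+\lambda$ and $W(e^{\ii\theta}A)=e^{\ii\theta}W(A)$, while the asserted property is preserved under translations and rotations of the plane, I may move the midpoint $(a+b)/2$ to the origin and rotate so that the eigenvalues become $\pm d$ with $d\geq 0$ real. A final conjugation by a diagonal unitary $\mathrm{diag}(1,e^{\ii\alpha})$ scales the off-diagonal entry by a unimodular factor without disturbing the diagonal, so it suffices to treat $A=\begin{pmatrix} d & \gamma \\ 0 & -d \end{pmatrix}$ with $d,\gamma\geq 0$ and eigenvalues $\pm d$.

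Next I would pass to the support function. As $W(A)$ is convex and compact, it equals the intersection of its supporting half-planes, and the supporting line orthogonal to the direction $e^{\ii\theta}$ lies at signed distance
$$
h(\theta)=\max_{z\in W(A)}\Re\zav{e^{-\ii\theta}z}=\lambda_{\max}\zav{\Re\zav{e^{-\ii\theta}A}},
$$
where $\Re M:=\tfrac12\zav{M+M^\star}$; the second equality holds because $\Re\zav{u^\star Bu}=u^\star(\Re B)u$ and the numerical range of a Hermitian matrix is the segment between its extreme eigenvalues, applied to $B=e^{-\ii\theta}A$. For the normalized matrix, $\Re\zav{e^{-\ii\theta}A}$ is the Hermitian matrix
$$
\begin{pmatrix} d\cos\theta & \tfrac{\gamma}{2}e^{-\ii\theta} \\ \tfrac{\gamma}{2}e^{\ii\theta} & -d\cos\theta \end{pmatrix},
$$
whose eigenvalues are $\pm\sqrt{d^2\cos^2\theta+\gamma^2/4}$, so that $h(\theta)=\sqrt{d^2\cos^2\theta+\gamma^2/4}$.

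It then remains to identify this as an ellipse and locate its foci. For the origin-centered ellipse with semi-axes $A_e$ along the real axis and $B_e$ along the imaginary axis, the support function is exactly $\sqrt{A_e^2\cos^2\theta+B_e^2\sin^2\theta}$; matching with
$$
h(\theta)^2=d^2\cos^2\theta+\tfrac{\gamma^2}{4}=\zav{d^2+\tfrac{\gamma^2}{4}}\cos^2\theta+\tfrac{\gamma^2}{4}\sin^2\theta
$$
yields $A_e=\sqrt{d^2+\gamma^2/4}$ and $B_e=\gamma/2$. Since a convex body is determined by its support function, $W(A)$ is precisely this elliptical disk and $\partial W(A)$ the corresponding ellipse. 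The distance from its center to each focus is $\sqrt{A_e^2-B_e^2}=d$ and its major axis is the real axis, so the foci sit at $\pm d$, i.e. exactly at the eigenvalues (when $d=0$ the ellipse degenerates to the circle $\abs{z}=\gamma/2$ and the coincident foci are the repeated eigenvalue). The focal characterization of the ellipse then gives $\abs{z-d}+\abs{z+d}=2A_e$ on the boundary, which is \eqref{n2case} with $C=2\sqrt{d^2+\gamma^2/4}>0$.

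The computation of $h(\theta)$ and the matching of semi-axes are routine; the substance of the statement is the two facts that the support function has the special elliptic form $\sqrt{A_e^2\cos^2\theta+B_e^2\sin^2\theta}$ at all, and that the center-to-focus distance collapses to exactly $\sqrt{A_e^2-B_e^2}=d$, pinning the foci to the eigenvalues rather than to unrelated points. This last coincidence is the crux of Toeplitz's result and is precisely what fails to persist in higher dimensions, as the rest of the paper explains. (Alternatively one could avoid support functions and argue directly from the parametrization $z=d\cos\psi+\tfrac{\gamma}{2}\sin\psi\,e^{\ii\phi}$, $\psi\in[0,\pi]$, $\phi\in[0,2\pi)$, of $u^\star Au$ over unit vectors $u=\zav{\cos\tfrac\psi2,\ \sin\tfrac\psi2\,e^{\ii\phi}}$, checking that this locus fills the same disk; the support-line route is cleaner and better suited to the present setting.)
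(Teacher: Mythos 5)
Your proof is correct, but it takes a genuinely different route from the paper's. Both arguments ultimately rest on the same mechanism — the supporting line of $W(A)$ with normal direction $e^{\imag\theta}$ sits at distance $\lambda_{\max}\zav{\Re\zav{e^{-\imag\theta}A}}$ from the origin — but they exploit it differently. The paper never solves the eigenvalue problem explicitly: it takes the Schur form with diagonal $\lambda_1,\lambda_2$ and off-diagonal $b$, computes the determinant of its matrix pedal coordinate, $\deter{p(A)}=p(\lambda_1)p(\lambda_2)-\abs{b}^2/4$, so that the boundary is the coval curve $p(\lambda_1)p(\lambda_2)=\abs{b}^2/4$, and then invokes its Proposition on the coval representation of an ellipse (product of distances from the foci to any tangent is the square of the semi-minor axis) to identify the curve and its foci at once; no normalization beyond Schur is needed, the foci appear at the eigenvalues automatically, and the computation is literally the $n=2$ case of the paper's main theorem. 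You instead normalize aggressively (translation, rotation, diagonal unitary conjugation) to reach $\mathrm{diag}$-free form with eigenvalues $\pm d$ and $\gamma\geq 0$, solve the $2\times 2$ Hermitian eigenvalue problem to get the support function $h(\theta)=\sqrt{d^2\cos^2\theta+\gamma^2/4}$ explicitly, match it against the known support function $\sqrt{A_e^2\cos^2\theta+B_e^2\sin^2\theta}$ of an axis-aligned ellipse, and only then recover the foci through $\sqrt{A_e^2-B_e^2}=d$. What your version buys is self-containedness: it needs neither the paper's Proposition~\ref{numrangecopolar} nor the coval characterization of the ellipse, only standard convexity facts (a compact convex set is determined by its support function). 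What the paper's version buys is that the foci emerge without any computation of eigenvalues of $\Re\zav{e^{-\imag\theta}A}$ or any normalization, and the argument visibly generalizes to the higher-dimensional machinery that is the point of the paper — the "coincidence" you correctly flag as the crux (foci landing on eigenvalues) is, in the coval formulation, not a coincidence to be checked but the immediate content of the factored determinant.
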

\begin{remark}
We are going to give an elegant alternative proof of (\ref{n2case}) later.
\end{remark}
This is a very neat description, since it not only elegantly describes $\partial W(A)$, but also gives meaning to the eigenvalues.

Is there a similar description in higher dimensions? For the $3\times 3$ case, one conjecture might be that the boundary points $z$ of $W(A)$, where $A\in \mathbb{C}^{3\times 3}$, satisfy
\begin{equation}\label{trialtrillipse}
|z-a|+|z-b|+|z-c|\stackrel{?}{=}L,
\end{equation}
where $a,b,c\in \sigma(A)$ and $L>0$. In other words that a \textit{trillipse} is the answer.

Unfortunately, this is not true. Consider the following matrix.
$$
A_0:=\left( \begin {array}{ccc} 1+i&2+i&1\\ \noalign{\medskip}0&5-i&3+i
\\ \noalign{\medskip}0&0&-1+2\,i\end {array} \right).
$$
Its corresponding Kippenhahn curve reads:
\begin{align}
C(A_0):=\nonumber&60752\,{x}^{6}+233872\,{x}^{5}y+613524\,{x}^{4}{y}^{2}+1025696\,{x}^{3
}{y}^{3}+1206840\,{x}^{2}{y}^{4}+893200\,x{y}^{5}+384500\,{y}^{6}\\
\nonumber &-566848\,{x}^{5}-2408352\,y{x}^{4}-5636272\,{x}^{3}{y}^{2}-8074720\,{y}
^{3}{x}^{2}-6812400\,x{y}^{4}-2728000\,{y}^{5}\\
\nonumber &+1891120\,{x}^{4}+
9243072\,y{x}^{3}+18686944\,{x}^{2}{y}^{2}+18747840\,x{y}^{3}+7597600
\,{y}^{4}-4248260\,{x}^{3}\\
\nonumber&-16941432\,{x}^{2}y-23257860\,x{y}^{2}-
11055800\,{y}^{3}+4897364\,{x}^{2}+12045760\,xy+7823900\,{y}^{2}\\
\label{n3example}&-1615440\,x-1538400\,y-483825=0.
\end{align}
There is only one free parameter in (\ref{trialtrillipse}) -- the constant $L$, and after fixing it with a single boundary point of $W(A_0)$ we will see that the curves do not match. See Figure~\ref{trillipsefig}. 
\begin{figure}[ht] 
\centering 
\includegraphics[scale=0.4,trim=2cm 9cm 2cm 2cm,clip]{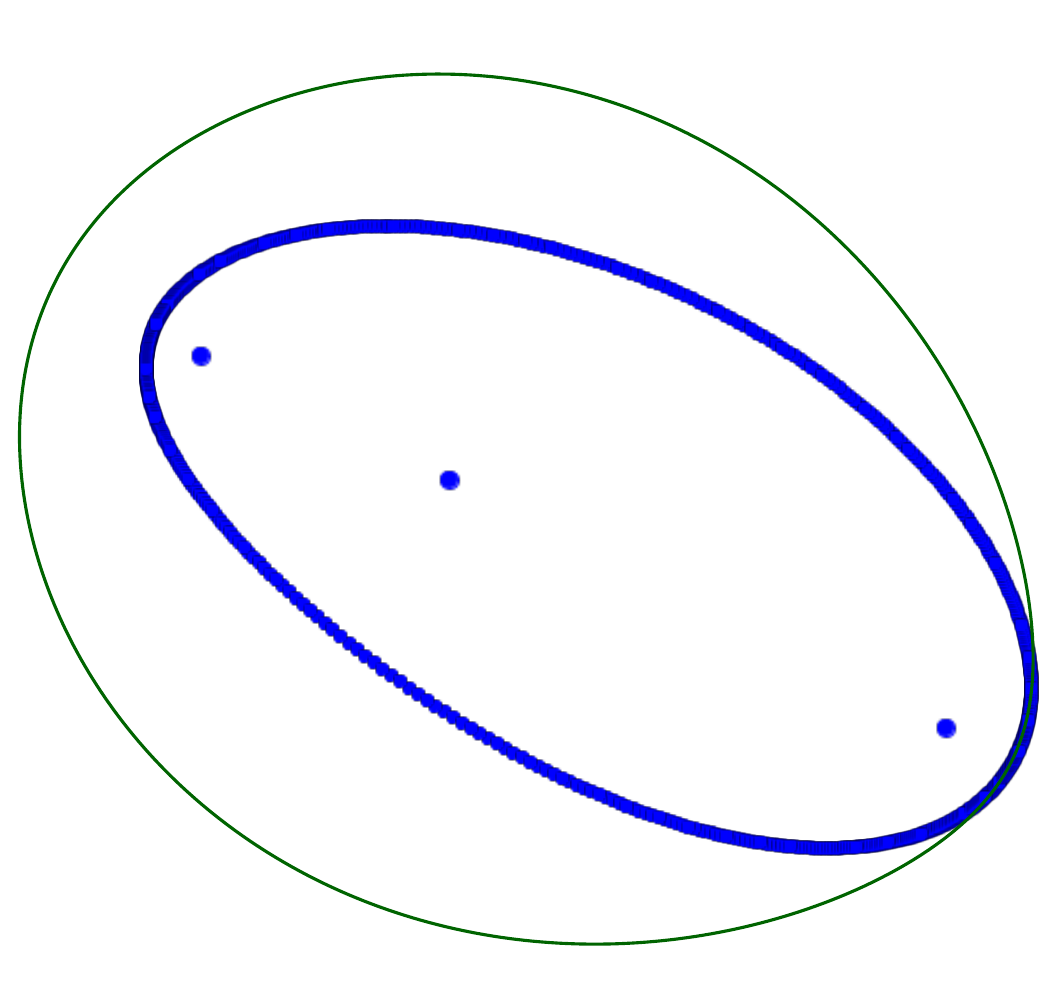}
\caption{Numerical range of $A_0$ with its eigenvalues and the trial trillipse.}
\label{trillipsefig}
\end{figure}
Hence numerical ranges of $3\times 3$ matrices are not trillipses in general. What about other ovals?

Let $r(a)$ denote the distance from a point on a given curve to point $a$, which we will call the ``pole''. By an oval we mean any curve such that there is an (algebraic) relation between the distances to different poles, similar to an ellipse $r(a)+r(b)=L$ or a trillipse $r(a)+r(b)+r(c)=L$. The precise definition of an oval will be given in Section~\ref{ovalsec}. 

The notion of \textit{focus} can be generalized to arbitrary algebraic curves. It is known that the only (real) foci of the Kippenhahn curve of an $n$-dimensional matrix are the eigenvalues (see \cite[6.1]{WuGau}). The eigenvalues are therefore natural candidates for poles. Does the curve $\partial W(A)$ have a nice oval description with eigenvalues as poles?

We are going to show the following 
\begin{proposition}\label{P2}
Let $A\in\mathbb{C}^{3\times 3}$ with distinct linearly independent eigenvalues $a,b,c\in\mathbb{C}$. There exists a polynomial $f$ in three variables such that the Kippenhahn curve of $A$ satisfies
\begin{equation}
f(r(a),r(b),r(c))=0.
\end{equation}
\end{proposition}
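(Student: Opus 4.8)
The plan is to eliminate the plane coordinates $x,y$ from the defining equation of the Kippenhahn curve by promoting the three squared distances to new coordinates. Write the Kippenhahn curve of $A$ as the real plane curve $C(A)\colon P(x,y)=0$, with $P$ a polynomial of degree at most $n(n-1)=6$, and set $z=x+\ii y$. For each eigenvalue put
\[
\rho_a:=r(a)^2=\abs{z-a}^2=(x-\Re a)^2+(y-\Im a)^2,
\]
and likewise $\rho_b,\rho_c$. Each is quadratic in $(x,y)$, but all three share the same second-order part $x^2+y^2$; this single observation drives the whole argument.

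Consequently the two differences
\[
\rho_a-\rho_b=-2(\Re a-\Re b)x-2(\Im a-\Im b)y+\abs{a}^2-\abs{b}^2,
\]
\[
\rho_a-\rho_c=-2(\Re a-\Re c)x-2(\Im a-\Im c)y+\abs{a}^2-\abs{c}^2,
\]
are \emph{affine-linear} in $(x,y)$. I would read them as a $2\times2$ linear system for $(x,y)$, whose coefficient matrix has determinant proportional to
\[
\det\begin{pmatrix}\Re a-\Re b&\Im a-\Im b\\ \Re a-\Re c&\Im a-\Im c\end{pmatrix},
\]
i.e. to the signed area of the triangle $a,b,c$. The hypothesis that the eigenvalues are distinct and ``linearly independent'' amounts precisely to $a,b,c$ being affinely independent, i.e. not collinear, so this determinant is nonzero and the system inverts. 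Solving it expresses $x=X(\rho_a,\rho_b,\rho_c)$ and $y=Y(\rho_a,\rho_b,\rho_c)$ as affine-linear functions of the three squared distances. The crucial point is that these are identities valid at \emph{every} point of the plane, not merely on $C(A)$, since they follow from the definition of distance alone.

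It then remains only to substitute back. Define
\[
f(r_a,r_b,r_c):=P\bigl(X(r_a^2,r_b^2,r_c^2),\,Y(r_a^2,r_b^2,r_c^2)\bigr).
\]
Since $X,Y$ are affine-linear in the squared variables and $P$ is a polynomial, $f$ is a polynomial in $r_a^2,r_b^2,r_c^2$, hence a genuine polynomial in the three variables $r_a,r_b,r_c$ (of degree at most $2\deg P\le 12$, with only even powers). For any $z\in C(A)$ the identities give $X=x$ and $Y=y$, so $f(r(a),r(b),r(c))=P(x,y)=0$, which is the desired relation. That $f$ is not the zero polynomial follows because the affine map $(\rho_a,\rho_b,\rho_c)\mapsto(X,Y)$ is onto $\R^2$, whence $P\circ(X,Y)\not\equiv0$; indeed the same substitution into one of the original quadratics produces a second polynomial $Q$ vanishing on the whole image of the plane, and the image of the curve is carved out on the surface $Q=0$ exactly by $f=0$.

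I expect the logical argument to be essentially routine, the only genuine input being the non-collinearity of the poles, which the linear-independence hypothesis secures and without which the $2\times2$ system degenerates. The part that deserves care — and which I regard as the real content hiding behind this short existence proof — is \emph{qualitative} rather than formal: the polynomial $f$ produced this way carries no visible structure (it is neither symmetric in its arguments nor of the simple additive shape $r(a)+r(b)+r(c)=L$), so while it certifies that $\partial W(A)$ is technically an oval with the eigenvalues as poles, it supplies no usable geometric description. This gap is precisely what motivates passing from point poles to tangent-line distances and the secondary values in the sequel. Finally, the construction uses nothing about $a,b,c$ beyond non-collinearity, so the same $f$ exists for any three non-collinear poles; the interest lies entirely in choosing them to be the eigenvalues.
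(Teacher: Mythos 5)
Your proposal is correct and takes essentially the same route as the paper: the paper expresses $x,y$ as affine functions of the squared distances $r(a)^2,r(b)^2,r(c)^2$ (via its formulas (\ref{Cartesiantooval}) together with the ``principle of triangulation'' determinant (\ref{triangp}), requiring exactly the non-collinearity you identify) and then substitutes into the Kippenhahn polynomial, which is precisely your elimination of $(x,y)$. The only differences are cosmetic --- you invert the $2\times 2$ system of differences of squared distances directly rather than passing through the four standard poles $\pm 1,\pm\imag$, and you add the welcome extra observation that the resulting $f$ is not the zero polynomial.
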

Sadly, this polynomial $f$ is not pretty in general.
Returning back to our previous example $A_0$, its oval description is even lengthier than its the Kippenhahn curve (\ref{n3example})!
\begin{align}
\nonumber &{\frac {16614359891\, {r(a)} ^{10}}{600000
}}+{\frac {100955575169\, {r(a)} ^{8}}{
720000}}+{\frac {7039216247\, {r(a)} ^{6}
}{27000}}+{\frac {264989089\, {r(a)} ^{4}
}{25920}}-{\frac {759925079\, {r(a)} ^{2}
}{1620}}\\
\label{n3oval}&+\dots-{\frac {1156665161\, {r(c)} ^{2
} {r(a)} ^{6} r \left( b \right)^{2}}{810000}}-{\frac {168829903\, r \left( c \right)^{2} {r(a)} ^{4} r \left( 
b \right)^{6}}{82012500}}-{\frac {175168565}{486}}=0,
\end{align}
where $a=1+\imag$, $b=5-\imag$, $c=2\imag-1$ are the eigenvalues of $A_0$ 
and the dots represents additional 70 terms.

In conclusion, numerical ranges of even 3 by 3 matrices are ovals but not, in general, simple ones.
\begin{remark}
At least if only the eigenvalues are considered. The question remains whether the above formula can be simplified by choosing other poles than just the eigenvalues or by adding more poles to the mix. As we will see, it is often advantageous to choose more poles than the absolute minimum. However, we will not attempt to answer this question here. 
\end{remark}

There are other types of ``ovals'', but instead of points, the distances to given poles are measured to \textit{tangents}. Let us denote by $p(a)$ the so-called \textit{pedal coordinate} -- the distance of a tangent of a given curve to the point $a$, called ``pole''. Since the quantity $p(a)$ is in a sense dual to the quantity $r(a)$, any curve that obeys an (algebraic) relation between distances to a finite number of poles will be called ``coval''. The precise definition is given in Section~\ref{covalsec}.

A simplest non-trivial coval is an ellipse, which can be described as a locus of lines such that the \textit{product} of distances to two foci is constant. Specifically
$$
p(a)p(b)=L^2,
$$
where $a,b$ are the foci of the ellipse and $L$ is its semi-minor axis. This geometrical gem known from antiquity (see \cite{lockwood}) is relatively forgotten; the author only recently learned of it from a lecture by Vladimir Dragovi\'c, who proves it in \cite[Lemma 3.1.]{Dragovic}. In fact, this description of an ellipse, for which we give an alternative proof in Proposition~\ref{ellipsecovalP}, sparked the whole idea for this paper.  

An ellipse is a numerical range of a matrix in dimension 2. What about dimension 3? As we have seen, the Kippenhahn curve for our example matrix $A_0$ (\ref{n3example}) enjoys an algebraic equation with $28$ terms, or the oval description with 78 terms (\ref{n3oval}). But the corresponding coval representation has only \textit{two} terms: 
\begin{equation}\label{n3coval}
p(a)p(b)p(c)=4p(\chi),\qquad \chi:=\frac{5+14\imag}{16},
\end{equation} 
involving only distances of tangents to the eigenvalues $a,b,c$ and to the additional number $\chi$ which, interestingly, is a member of the numerical range, i.e. $\chi\in W(A_0)$. We call this number \textit{the secondary value} of $A_0$.

This is not a happy coincidence, since -- as we will show in Corollary~\ref{3by3cor} -- the Kippenhahn curves for \textit{all} 3 by 3 matrices enjoy a two-term coval representation involving only eigenvalues and an additional number as poles exactly in the same form as (\ref{n3coval}).

In general, it seems that a coval description is natural for the boundary curve of a numerical range in all dimensions. In Proposition~\ref{numrangecopolar} we will show that the curve $\partial W(A)$ satisfies
\begin{equation}
\deter{p(A)}=0,
\end{equation} 
where $p(A)$ is a matrix analogue of the pedal coordinate $p(a)$. The precise definition will be given in (\ref{pAdef}). 

The main result of this paper is the following:
\begin{theorem}\label{maintint}
For any matrix $A\in\mathbb{C}^{n\times n}$, $n\geq 2$ it holds
\begin{equation}\label{covalpaint}
\deter{p(A)}=\sum_{k=0}^{\trunc{\frac{n}{2}}}\gamma_k\prod_{j=1}^{n-2k}p\zav{\lambda^{(k+1)}_{j}}.
\end{equation}
\end{theorem}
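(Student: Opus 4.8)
The plan is to recognise $\det p(A)$ as the restriction to the unit circle of a homogeneous \emph{Kippenhahn form}, and to prove the identity by a descent that repeatedly factors out the conic $x^2+y^2$. Writing $A=H+\ii K$ with $H,K$ Hermitian and using $\Re(e^{-\ii\theta}A)=\cos\theta\,H+\sin\theta\,K$, we have $p(A)=\Re(e^{-\ii\theta}A)-tI$ and $p(a)=\Re(e^{-\ii\theta}a)-t$. First I would introduce the ternary form
\[
F(x,y,z):=\det\zav{xH+yK+zI},
\]
homogeneous of degree $n$ with \emph{real} coefficients, so that $\det p(A)=F(\cos\theta,\sin\theta,-t)$, while for a pole $a\in\C$ one has $p(a)=L_a(\cos\theta,\sin\theta,-t)$ with the real linear form $L_a(x,y,z):=\Re(a)\,x+\Im(a)\,y+z$. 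Since $\cos^2\theta+\sin^2\theta=1$, the theorem is precisely the restriction to $Q:=x^2+y^2=1$ of the homogeneous identity
\[
F=\sum_{k=0}^{\trunc{\frac{n}{2}}}\gamma_k\,Q^{k}\prod_{j=1}^{n-2k}L_{\lambda^{(k+1)}_{j}},\qquad \gamma_k\in\R.
\]

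The engine is the substitution $w=x-\ii y$, $\bar w=x+\ii y$, under which $Q=w\bar w$ and $xH+yK=\tfrac{w}{2}A+\tfrac{\bar w}{2}A^\star$. Restricting $F$ to the isotropic line $\bar w=0$ gives $F|_{\bar w=0}=\det(zI+\tfrac{w}{2}A)=\prod_{j}(z+\tfrac{w}{2}\lambda_j)$, a binary form in $(w,z)$ whose roots are the eigenvalues; this fixes $\gamma_0=1$ (the monic coefficient of $z^n$) and the top poles $\lambda^{(1)}_{j}=\lambda_j$. I would then establish the following descent step for any real homogeneous $\Phi$ of degree $d$. Factor the binary form $\Phi|_{\bar w=0}=c\prod_{j=1}^{d}(z+\tfrac{w}{2}\mu_j)$, where $c$ is the coefficient of $z^{d}$, hence real. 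The reality of $\Phi$ (the coefficient of $w^p\bar w^qz^r$ is the conjugate of that of $w^q\bar w^pz^r$) forces $\Phi|_{w=0}=c\prod_{j}(z+\tfrac{\bar w}{2}\bar\mu_j)$, so that $\Psi:=\Phi-c\prod_{j}L_{\mu_j}$ vanishes on both $\bar w=0$ and $w=0$. The crucial point is that $c$ is \emph{real}: this is exactly what makes $\Psi$ divisible by $w$ as well as by $\bar w$, hence by $Q$. As each $L_{\mu_j}$ has real coefficients, $\Psi$ is real, so $\Phi':=\Psi/Q$ is again a real homogeneous form, now of degree $d-2$, to which the step applies verbatim.

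Iterating this descent on $\Phi=F$ produces the displayed decomposition: the $k$-th peeling contributes $\gamma_kQ^k\prod_{j=1}^{n-2k}L_{\lambda^{(k+1)}_{j}}$, where the $\lambda^{(k+1)}_{j}$ are, by definition, the roots of the binary form obtained at that stage -- the eigenvalues at $k=0$, the secondary values at $k=1$, and so on -- and the recursion terminates at degree $n-2\trunc{\frac{n}{2}}\in\{0,1\}$. Setting $Q=1$ yields the stated formula. The identification $F|_{\bar w=0}=\prod_j(z+\tfrac{w}{2}\lambda_j)$ needs only the characteristic polynomial (no diagonalisability), and the reality propagates trivially down the recursion since $Q$ and all the $L_{\mu_j}$ are real, so these are routine. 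The main obstacle is the degenerate case in which the leading coefficient $\gamma_k$ of some intermediate form vanishes (equivalently, $A$ is normal, or some secondary values escape to infinity): then $\Phi|_{\bar w=0}$ drops its $z$-degree and acquires a spurious factor of $w$, so the clean ``factor out $Q$'' step fails and must be replaced by a genericity/limiting argument or by explicit bookkeeping of poles at infinity. I expect this degeneracy analysis, rather than the descent itself, to be where the real work lies.
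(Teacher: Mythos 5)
Your proposal is correct and is essentially the paper's own proof in different coordinates: your real ternary form $F$, the isotropic variables $w,\bar w$, and the division by $Q=w\bar w$ correspond, under the linear substitution identifying $(w,\bar w)$ with the paper's variables $(z,y)$ (up to scaling), exactly to the paper's $f(x,y,z)=\det\left(xI-yA^\star-zA\right)$, its conjugate symmetry $\overline{f(x,y,z)}=f(x,z,y)$, and the division by $yz$ in its ``greedy algorithm'' lemma, with your peeled-off products of linear forms $L_{\mu_j}$ matching the paper's factors $x-\bar\lambda_j y-\lambda_j z$. The degenerate case you flag (dropped $z$-degree of the restricted binary form, i.e.\ poles at infinity) is precisely Case 3 of that lemma, and the paper resolves it by exactly the interpolation-and-limit argument you anticipate, so your outline is the same proof with one step left to execute rather than a different route.
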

See Theorem~\ref{maint} for all the details.

This result shows that the boundary curve of the numerical range of an $n\times n$ complex matrix can be described by a $\trunc{n/2}+1$ term coval equation involving $(\trunc{n/2}+1)(n-\trunc{n/2})$ poles -- \textit{provided} that we allow some of the poles to be "at infinity". The definition of a pole at infinity is given in Section~\ref{covalsec}. 

Now -- \textit{typically} -- all the numbers $\lambda^{(k)}_j$ are finite. In fact, the author is yet to find a specific example of a matrix with a pole at infinity. Such a matrix would have to have a dimension of at least 5. 
 
The first $n$ poles $\lambda_j^{(1)}$ are always finite and coincide with the eigenvalues of $A$. The next $n-2$ poles $\lambda^{(2)}_j$ exists only for non-normal matrices and are also always finite and seem to play an important role in the subject. We call them \textit{secondary values} of $A$. Similarly to eigenvalues they are solutions of a simple algebraic equation and their centroid is always a member of the numerical range. 

Furthermore, in Theorem~\ref{T2} we show that a necessary condition for a given matrix $A$ to decompose into a direct sum of a number and a lower dimensional matrix, is that a secondary value coincides with an eigenvalue. We also show some geometric properties of secondary values in dimensions 3 and 4. Finally, some open problems are presented at the end.

\section{Ovals}\label{ovalsec}

\begin{definition}For any $z,a\in\mathbb{C}$ denote the distance between $z$ and $a$ by
$$
r(a):=|z-a|.
$$
\end{definition}
\begin{remark}
The \textit{function} $r(a)$ (depending on $z$) can be thought as a \textit{coordinate}, the same way the polar distance $r$ is. If $a=0$ we usually equate
$$
r(0)\equiv r.
$$
\end{remark}
\begin{definition}
A plane curve $\gamma:[a,b]\to \mathbb{C}$ is an \textit{oval} if there exists a function $f:U\subset\mathbb{R}^m\to \mathbb{R}$ and $m$ complex numbers $a_1,\dots, a_m$ called ``poles'' such that
\begin{equation}
\forall z\in \gamma([a,b]):\qquad f(r(a_1),\dots, r(a_m))=0.
\end{equation}
\end{definition}
\begin{remark}
In other words, an \textit{oval} is the locus of points such that a relation between distances to a number of poles holds.
\end{remark}
Here is few examples of well-known ovals. In the following list, the arguments of $r$ functions  are complex numbers; the remaining constants are real.
\begin{align}
&r(a)=0, & \text{Point.}\\
&r(a)=R, & \text{Circle of radius $R$.}\\
&r(a)+r(b)=C, & \text{Ellipse.}\\
&r(a)-r(b)=C, & \text{Hyperbola.}\\
&r(a)=r(b), & \text{Line.}\\
&\frac{r(a)}{r(b)}=C, & \text{Circle.}\\
&r(a)r(-a)=a, & \text{Lemniscate of Bernoulli.}\\
&r(a)r(b)=C, & \text{Cassini oval.}\\
&\alpha r(a)+\beta r(b)=C, & \text{Cartesian Oval.}\\
&r(a)+r(b)+r(c)=C, & \text{Trillipse.}
\end{align}
See e.g. \cite{Lawrence, Zwikker}.
\begin{example}
The oval notation permits one to make deep geometrical implications based on a simple algebra. For instance, it is known that a curve which keeps a ratio of distances to two points constant is a circle.

What is the curve which keeps the ratio of distance to two \textit{circles}? Distance to a circle with the center at $a$ and radius $R_a$ is given by $|r(a)-R_a|$. The 
equation for our constant ratio oval is therefore
$$
\frac{r-R}{r(a)-R_a}=\alpha.
$$
Rearranging yields
$$
r-\alpha r(a)=R-\alpha R_a.
$$ 
A Cartesian oval.
\end{example}

\begin{remark}
Since the usual Cartesian coordinates $x,y$ can be written
\begin{equation}\label{Cartesiantooval}
x=\frac{r(-1)^2-r(1)^2}{4},\qquad y=\frac{r(-\imag)^2-r(\imag)^2}{4},
\end{equation}
it follows that every algebraic curve, i.e. a curve given implicitly by $p(x,y)=0$ where $p\in\mathbb{R}[x,y]$, is an oval with only four poles. We are going to show that the number of poles can be always reduced to three.
\end{remark}

\begin{lemma} (Principle of triangulation)
Let $a,b,c,d\in\mathbb{C}$ such that $a,b,c$ are not co-linear. Then
\begin{equation}\label{triangp}
\deter{\begin{array}{ccccc}
r(a)^2 & r(b)^2 & r(c)^2 & r(d)^2 & 1 \\
1 & 1 & 1 & 1 & 0 \\
\Re a & \Re b & \Re c & \Re d & 0 \\
\Im a & \Im b & \Im c & \Im d & 0 \\
|a|^2 & |b|^2 & |c|^2 & |d|^2 & 1 
\end{array}}=0,
\end{equation}
i.e. every distance $r(d)^2$ is an affine combination of three other distances. 
\end{lemma}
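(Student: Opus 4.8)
My plan rests on a single elementary identity. Writing $z=x+\ii y$ for the point carrying the distances and $a=\Re a+\ii\,\Im a$, expansion of the squared distance gives
\[
r(a)^2=|z-a|^2=|z|^2-2x\,\Re a-2y\,\Im a+|a|^2 .
\]
The point is that the coefficients $|z|^2,-2x,-2y,1$ here are the \emph{same} for every pole. First I would read this as a statement about the rows of the matrix: in each of the first four columns the top entry $r(\cdot)^2$ is exactly
\[
|z|^2\cdot 1+(-2x)\cdot\Re(\cdot)+(-2y)\cdot\Im(\cdot)+1\cdot|\cdot|^2,
\]
so that the first row equals $|z|^2(\text{row }2)-2x(\text{row }3)-2y(\text{row }4)+(\text{row }5)$ throughout columns one to four.

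The step I expect to matter most is the fifth, augmented column $(1,0,0,0,1)^{T}$, because it is precisely the bordering that is engineered to make the relation hold everywhere. Applying the same combination of rows $2$--$5$ to that column yields $|z|^2\cdot 0-2x\cdot 0-2y\cdot 0+1\cdot 1=1$, which matches the top entry $1$ of the fifth column. Hence the first row is a genuine linear combination of the other four across all five columns, the rows are linearly dependent, and the determinant vanishes. I would stress that this part uses no hypothesis on $a,b,c,d$ at all; the non-colinearity condition is not yet needed.

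Finally I would invoke non-colinearity to upgrade ``the determinant is zero'' to the stated affine relation. Expanding along the column belonging to $d$, the cofactor multiplying $r(d)^2$ is, up to sign, the $3\times 3$ determinant
\[
\det\begin{pmatrix}1&1&1\\ \Re a&\Re b&\Re c\\ \Im a&\Im b&\Im c\end{pmatrix},
\]
which is twice the signed area of the triangle $abc$ and is nonzero exactly when $a,b,c$ are not co-linear; dividing by it isolates $r(d)^2$. To see that the combination is affine I would argue on columns instead: affine independence of $a,b,c$ lets me write $d=\alpha a+\beta b+\gamma c$ with unique weights $\alpha+\beta+\gamma=1$, and feeding these into the fundamental identity cancels the $|z|^2$, $x$ and $y$ contributions, leaving
\[
r(d)^2=\alpha\,r(a)^2+\beta\,r(b)^2+\gamma\,r(c)^2+\kappa,\qquad \kappa:=|d|^2-\alpha|a|^2-\beta|b|^2-\gamma|c|^2 .
\]
The three weights sum to $1$, so $r(d)^2$ is an affine combination of the remaining squared distances up to the explicit additive constant $\kappa$, which completes the argument.
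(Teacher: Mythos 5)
Your proof is correct and complete. The paper offers no argument of its own here --- its proof of this lemma is literally the single word ``Exercise'' --- so there is no authorial approach to compare against; your argument is the natural one. The key identity $r(w)^2=|z|^2-2x\,\Re w-2y\,\Im w+|w|^2$ exhibits row one as $|z|^2(\text{row }2)-2x(\text{row }3)-2y(\text{row }4)+(\text{row }5)$, your separate check of the bordered fifth column $(1,0,0,0,1)^{T}$ is exactly the point that makes the dependence hold across the whole matrix, and your two closing observations are both accurate and worth having: the vanishing of the determinant requires no hypothesis on $a,b,c,d$ at all, while non-colinearity is precisely what makes the cofactor of $r(d)^2$ (the doubled signed area of the triangle $abc$) nonzero, equivalently what makes the barycentric weights $\alpha,\beta,\gamma$ with $\alpha+\beta+\gamma=1$ exist and be unique, so that $r(d)^2$ can genuinely be solved for as an affine combination of the other three squared distances plus the explicit constant $\kappa$.
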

\begin{proof}
Exercise.
\end{proof}
\begin{remark}
If the poles $a,b,c$ are co-linear the equation (\ref{triangp}) become trivial.
\end{remark}
\begin{corollary}\label{C1}
Every algebraic curve has a three poles oval description, provided that the three poles are not co-linear.
\end{corollary}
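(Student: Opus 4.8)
The plan is to chain the two reductions supplied in this section: the four-pole Cartesian parametrization of the preceding Remark, which realizes any algebraic curve as an oval with the fixed poles $1,-1,\imag,-\imag$, and the Principle of triangulation, which lets me trade any fourth pole for three chosen non-co-linear ones. Composing them collapses four poles down to three.

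First I would start from an arbitrary algebraic curve $p(x,y)=0$ with $p\in\R[x,y]$ and insert the identities (\ref{Cartesiantooval}),
$$x=\frac{r(-1)^2-r(1)^2}{4},\qquad y=\frac{r(-\imag)^2-r(\imag)^2}{4}.$$
Because $x$ and $y$ are then linear in the squared distances and $p$ is polynomial, the composition is a genuine polynomial relation $Q\zav{r(1)^2,r(-1)^2,r(\imag)^2,r(-\imag)^2}=0$, i.e. a four-pole oval description; this is precisely the content of the Remark.

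Next I would fix any three non-co-linear poles $a,b,c$ and use the Lemma to eliminate $1,-1,\imag,-\imag$ in their favour. Expanding the determinant (\ref{triangp}) along its first row yields a relation among $r(a)^2,r(b)^2,r(c)^2,r(d)^2$ and $1$ whose coefficients are the $4\times4$ cofactors, all independent of the running point $z$. The cofactor multiplying $r(d)^2$ equals, up to sign, $\deter{\begin{array}{ccc} 1 & 1 & 1 \\ \Re a & \Re b & \Re c \\ \Im a & \Im b & \Im c \end{array}}$, twice the signed area of the triangle $abc$, which is nonzero exactly when $a,b,c$ are not co-linear. This is the single place where the hypothesis is used. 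Under it I can solve for each squared distance as an affine-linear expression $r(d)^2=\lambda_a r(a)^2+\lambda_b r(b)^2+\lambda_c r(c)^2+\mu$, with real coefficients depending only on $a,b,c,d$.

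Finally I would apply this elimination to each $d\in\szav{1,-1,\imag,-\imag}$ and substitute the resulting expressions into $Q$. Since every substitution is affine-linear in $r(a)^2,r(b)^2,r(c)^2$, polynomiality is preserved, so I land on a single polynomial identity $\tilde{P}\zav{r(a)^2,r(b)^2,r(c)^2}=0$ which, viewed as a function of $r(a),r(b),r(c)$, is the desired three-pole oval description. I do not expect a serious obstacle, as the statement is a formal consequence of the two lemmas; the only points demanding care are the invertibility step -- verifying that non-co-linearity keeps the cofactor of $r(d)^2$ nonzero so the triangulation relation can actually be solved for it -- and the harmless bookkeeping observation that the elimination produces an additive constant $\mu$, which costs nothing since an oval relation need not be homogeneous.
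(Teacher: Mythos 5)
Your proposal is correct and follows exactly the route the paper intends: the corollary is the composition of the Remark (equation (\ref{Cartesiantooval}), giving a four-pole oval with poles $1,-1,\imag,-\imag$) with the Principle of triangulation (\ref{triangp}), which is precisely how the paper's subsequent Example carries it out for the matrix $A_0$. Your additional verification that the cofactor of $r(d)^2$ reduces to the $3\times 3$ determinant measuring the signed area of the triangle $abc$ -- hence nonzero exactly under the non-co-linearity hypothesis -- is a worthwhile detail the paper leaves as an exercise.
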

\begin{remark}
Corollary~\ref{C1} proves Proposition~\ref{P2}.
\end{remark}
\begin{example}
Consider the matrix $A_0$ from the introduction
$$
A_0:=\left( \begin {array}{ccc} 1+i&2+i&1\\ \noalign{\medskip}0&5-i&3+i
\\ \noalign{\medskip}0&0&-1+2\,i\end {array} \right),
$$
and its Kippenhahn curve (\ref{n3example}).
Using formulas (\ref{Cartesiantooval}) we can replace algebraic equation (\ref{n3example}) with an oval with poles $1,-1,\imag,-\imag$. The Principle of triangulation (\ref{triangp}) further allows us to replace quantities
$
r(1)^2,r(-1)^2,r(\imag)^2,r(-\imag)^2
$
with
$
r(a)^2,r(b)^2,r(c)^2,
$
where $a=1+\imag$,$b=5-\imag$,$c=2\imag-1$ are the eigenvalues of $A_0$. Specifically, we get the following formulas
$$
x=\frac{13 r(c)^2-r(b)^2-27 r(a)^2+15}{90},\qquad y=\frac{r(b)^2-9r(a)^2-4 r(c)^2+12}{36}.
$$
Substituting these into (\ref{n3example}) thus gives an oval description of $\partial W(A_0)$ with eigenvalues as poles in the form (\ref{n3oval}), showing that Kippenhahn curves of 3 by 3 matrices are not simple ovals.  
\end{example}

\section{Covals}\label{covalsec}
\begin{definition}
For any piece-wise differentiable plane curve $\gamma: [\alpha,\beta]\in \mathbb{R}\to \mathbb{C}$ parameterized by arc-length $\gamma(s)=x+\imag y$, i.e. $\gamma'(s)=x'+\imag y'$: $x'^2+y'^2=1$, and for any complex number $a$, we define the following quantities
\begin{align}
p(a)&:=-(y-\Im a) x'+(x-\Re a) y', & \text{pedal coordinate,}\\
p_c(a)&:=(x-\Re a) x'+(y-\Im a) y', & \text{contrapedal coordinate.}\\
(x',y')&=:(\cos \theta, \sin \theta), & \text{$\theta$ tangential angle.}
\end{align}
\end{definition}
\begin{remark}
Geometrically, the pedal coordiante $p(a)$ is the distance of $a$ to the tangent of $\gamma$ at the point $(x,y)$ -- or rather the absolute value $|p(a)|$. The quantity $p(a)$ can be negative depending on the curve's orientation (for our purposes the orientation will be unimportant). 

In the same way, the absolute value of the contrapedal coordinate $p_c(a)$ is the distance of $a$ to the \textit{normal} instead of tangent. The contrapedal coordiante $p_c(a)$ can be also sometimes computed from the following equation
$$
p(a)^2+p_c(a)^2=r(a)^2,
$$
linking the three quantities together.

The remaining variable $\theta$ is the angle between the tangent and $x$-axis and its known in the literature as the tangential angle. Similarly to arc-length $s$ and curvature $\kappa$ this quantity is ``intrinsic'' to the curve in the sense that $\theta$ does not change performing translation or scaling. It does change when rotations are taken into the account but predictably. The tangential angle is used in the so-called Whewell coordinate system $(s,\theta)$. The relation ship to the other intrinsic coordinates is as follows
$$
\frac{\partial \theta}{\partial s}=\kappa.
$$
Another important differential relation between all the quantities we have introduced is the following.
\begin{equation}\label{diffrules}
r(a)\frac{\partial r(a)}{\partial s}=\frac{\partial p(a)}{\partial \theta}=p_c(a).
\end{equation}

\end{remark} 

\begin{definition}
We will call a plane curve $\gamma:[a,b]\to \mathbb{C}$ a \textit{coval} if there exists a function $f:U\subset\mathbb{R}^m\to \mathbb{R}$ and $m$ complex numbers $a_1,\dots, a_m$ called ``poles'' such that
\begin{equation}
\forall x+\imag y\in \gamma([a,b]):\qquad f(p(a_1),\dots, p(a_m))=0.
\end{equation}
\end{definition}
\begin{remark}
In other words, a \textit{coval} is the locus of points such that a relation between distances from given points to any tangent, holds. In this paper, we will mostly deal with the case when $f$ is a polynomial.
\end{remark}
We will now prove the coval representation of an ellipse.
\begin{proposition}\label{ellipsecovalP}
Any ellipse with foci at $a,b\in\mathbb{C}$ and the semi-minor axis $L$ satisfies
\begin{equation}\label{ellipsecoval}
p(a)p(b)=L^2,
\end{equation}
that is the product of distances from foci to any tangent is constant.
\end{proposition}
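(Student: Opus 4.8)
The plan is to combine the focal-sum characterization of the ellipse (Proposition~\ref{n2prop}) with the differential bookkeeping of the intrinsic coordinates, so that the product $p(a)p(b)$ collapses to a constant after a single application of the law of cosines. First I would record the geometric meaning of the intrinsic coordinates at a point $P=x+\imag y$ of the ellipse: writing $\psi_a$ for the angle between the tangent and the focal radius joining $P$ to $a$, the definitions give
\begin{equation*}
p(a)=r(a)\sin\psi_a,\qquad p_c(a)=r(a)\cos\psi_a,
\end{equation*}
which is consistent with the identity $p(a)^2+p_c(a)^2=r(a)^2$ from the remark following (\ref{diffrules}), and likewise for the pole $b$.

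Second, I would extract the reflection property of the ellipse from the paper's own machinery rather than quoting it. By Proposition~\ref{n2prop} the ellipse is the locus $r(a)+r(b)=2\alpha$, where $\alpha$ is the semi-major axis. Differentiating this constant relation along the curve and using $r(a)\,\partial r(a)/\partial s=p_c(a)$ from (\ref{diffrules}) yields
\begin{equation*}
\frac{p_c(a)}{r(a)}+\frac{p_c(b)}{r(b)}=0,
\end{equation*}
that is $\cos\psi_a+\cos\psi_b=0$, so $\psi_b=\pi-\psi_a$ and in particular $\sin\psi_a=\sin\psi_b=:\sin\psi$. Hence $p(a)/r(a)=p(b)/r(b)=\sin\psi$, and the two focal radii meet at $P$ under the internal angle $\angle aPb=\pi-2\psi$.

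Third, I would apply the law of cosines to the triangle with vertices $a$, $b$, $P$, whose side lengths are $r(a)$, $r(b)$ and $|a-b|=2\sqrt{\alpha^2-L^2}$ (the focal distance, as $L$ is the semi-minor axis). This gives
\begin{equation*}
|a-b|^2=r(a)^2+r(b)^2+2r(a)r(b)\cos 2\psi=(r(a)+r(b))^2-4r(a)r(b)\sin^2\psi.
\end{equation*}
Substituting $r(a)+r(b)=2\alpha$ and $|a-b|^2=4(\alpha^2-L^2)$ and simplifying leaves $r(a)r(b)\sin^2\psi=L^2$. Since $p(a)p(b)=r(a)r(b)\sin^2\psi$ by the first step, the claim follows.

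The routine parts are the two short algebraic simplifications; the only genuine care is with signs. The pedal coordinate $p$ is signed, so I would fix the orientation of the ellipse once and note that both foci lie on the same side of every tangent, making $p(a)$ and $p(b)$ carry the same sign and the product genuinely equal to $+L^2$. The main obstacle, such as it is, is making the angle relation $\angle aPb=\pi-2\psi$ precise: one must track which rays the tangent direction is measured against, so that $\psi_a$ and $\psi_b$ truly combine into $\pi-2\psi$ rather than merely being supplementary in cosine. A fully computational alternative -- parametrize as $(\alpha\cos t,\,L\sin t)$, write the tangent $\tfrac{x\cos t}{\alpha}+\tfrac{y\sin t}{L}=1$, compute both perpendicular distances to $(\pm\sqrt{\alpha^2-L^2},0)$ and multiply -- bypasses the sign subtleties and produces $L^2$ directly, and I would keep it in reserve as a verification.
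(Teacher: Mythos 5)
Your proof is correct, but it takes a partially different route from the paper's. Both arguments start from the same two ingredients: the focal-sum description $r(a)+r(b)=\mathrm{const}$ and the reflection law $p(a)/r(a)=p(b)/r(b)$. The difference is in the third ingredient and in how the reflection law is obtained. The paper quotes the reflection law as the ellipse's optical property (with a one-line geometric justification) and then invokes the \emph{pedal equation} $L^2/p(a)^2=M/r(a)-1$, cited from the literature (Yates, Edwards), so that the proof is a pure elimination of $r(a),r(b)$ from three equations. You instead derive the reflection law internally from the paper's own differential identities (\ref{diffrules}) by differentiating $r(a)+r(b)=2\alpha$ along the curve, and you replace the pedal equation entirely by the law of cosines on the focal triangle together with the focal-distance relation $\abs{a-b}^2=4(\alpha^2-L^2)$. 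What this buys you is self-containedness: the pedal equation is itself a nontrivial quoted fact, and your argument shows it is not needed — indeed, your computation $r(a)r(b)\sin^2\psi=L^2$ combined with the paper's elimination would \emph{reprove} the pedal equation rather than assume it. What the paper's route buys is brevity and a thematic point: it deliberately showcases how the three representations (oval, pedal, coval) of one curve interlock, which is the motif generalized in the proposition on $f'(r^2)=g'(p)$ immediately afterwards. Your handling of the sign issues (both foci on the same side of every tangent, hence $\sin\psi_a=\sin\psi_b$ rather than opposite signs) is the right thing to pin down, and your parametric fallback computation is a valid independent verification.
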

\begin{figure}[h] 
\centering 
\includegraphics[scale=1,trim=0.1cm 0.1cm 0.1cm 0.1cm,clip]{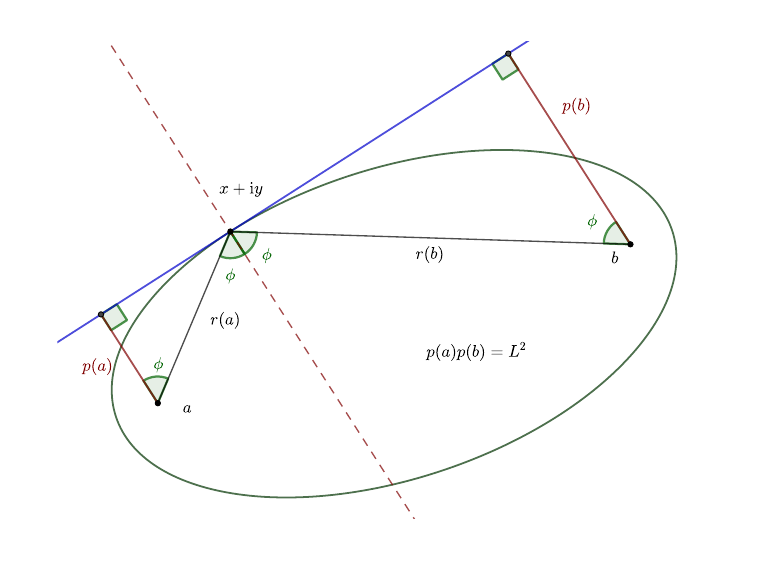}
\caption{Coval description of an ellipse.}
\label{ellipsecovalfig}
\end{figure}
\begin{proof}
We are going to use three fundamental fact about ellipses. Assuming that $M$ is the major axis of our ellipse (and $2L$ is the minor axis) we have:
\begin{align}
r(a)+r(b)&=M, & &\text{Oval description,}\\
\frac{p(a)}{r(a)}&=\frac{p(b)}{r(b)}, & &\text{law of reflection,}\\
\frac{L^2}{p(a)^2}&=\frac{M}{r(a)}-1, & &\text{pedal equation.}
\end{align}
The first equation is the defining property. 

The second one refers to the ellipse's optical quality  -- a beam of light emitted from one focus gets reflected into the other one. Denoting the angle of incidence by $\phi$, it is easy to see that 
$$
\frac{p(a)}{r(a)}=\cos \phi =\frac{p(b)}{r(b)},
$$
by the law of reflection. See Figure~\ref{ellipsecovalfig}.

The pedal equation can be found e.g. \cite{Yates, Edwards}. It is also a fundamental property since, as explained in \cite{lockwood,Blaschke6}, the form of the pedal equation tells us that an ellipse is a solution of \textit{Kepler problem} -- i.e. it is the trajectory of a test particle under the influence of a gravitational force from point mass object (located in the point $a$). In this context, $M$ is proportional to the mass of the attracting body and $L$ is proportional to the particle's angular momentum.

Eliminating $r(a)$ and $r(b)$ from these equations yields the result. 
\end{proof}
Here is a short list of covals
\begin{align}
\nonumber \text{Equation} && &\text{a solution}\\
p(a)&=0 & &\text{every line passing } a.\\ 
p(a)&=R & &\text{a circle with radius $R$ and center $a$.}\\
p(a)p(b)&=L & &\text{an ellipse with foci $a$, $b$.}\\ 
(p(a)-p(c))^2&=p(c)^2-L^2 & &\text{an ellipse with focus $a$ and center $c$.}\\ 
\label{covalcardioid}27a^2p&=4p(a)^3 & &\text{Cardioid with the cusp at $0$ and center at $a$.}
\end{align}
\begin{remark}\label{remark10}
It is important to stress that a coval representation does not define the curve uniquely. 
Consider for some function $f$ a coval:
$$
f(p(a_1),\dots, p(a_m))=0.
$$
It is obvious that any tangent line of a solution is also a solution. Therefore this equation does not describe a single curve but additional solutions can be obtained attaching a tangent at some point(s).  The resulting patchwork curves are also differentiable and we will call them \textit{comets}. See Figure~\ref{ellipticalcometfig}.
\end{remark}
\begin{figure}[h] 
\centering 
\includegraphics[scale=1,trim=0.1cm 0.1cm 0.1cm 0.1cm,clip]{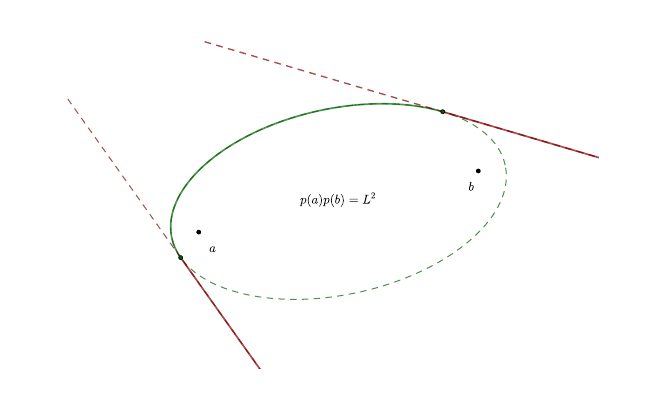}
\caption{Elliptical comet. Also a solution of $p(a)p(b)=L^2$.}
\label{ellipticalcometfig}
\end{figure}
\subsection{Singular solution}
Observe that for any $a\in\mathbb{C}$ we can write the pedal coordinate $p(a)$ as follows:
\begin{equation}\label{difsubs}
p(a)=(x-a_1)y'-(y-a_2)x'=\frac{(x-a_1)y'_x-(y-a_2)}{\sqrt{1+{y'_x}^2}}, \qquad y'_x:=\frac{{\rm d} y(x)}{{\rm d} x}.
\end{equation}
Substituting this into a coval equation
$$
f(p(a_1),\dots, p(a_m))=0
$$
we obtain a first order (in general \textit{nonlinear}) differential equation
$$
f(p(a_1),\dots, p(a_m))=\tilde f\zav{x,y,y'_x}=0.
$$
Eliminating $y'_x$ from the following set of equations
\begin{equation}\label{singsoleq}
\tilde f\zav{x,y,y'_x}=0,\qquad \frac{\partial\tilde f(x,y,y'_x)}{\partial y'_x}=0,
\end{equation}
we obtain what is known as \textit{singular solution} or \textit{discriminant curve} of the original equation. The singular solution has the property that touches any ``regular'' solutions. But the precise definition of a singular solution (and a regular one for that matter) is kind of difficult. See \cite{izumiya} for a discussion on the subject. It is these singular solutions that we are usually interested in.
\begin{example}
What is the singular solution of the coval
$$
p(a)=0?
$$
We have
$$
p(a)=\frac{(x-a_1)y'_x-(y-a_2)}{\sqrt{1+{y'_x}^2}}=0
$$
thus we can take
$$
\tilde f(x,y,y'_x)=\frac{(x-a_1)y'_x-(y-a_2)}{\sqrt{1+{y'_x}^2}}.
$$
Now
$$
\frac{\partial\tilde f}{\partial y'_x}=\frac{(x-a_1)+(y-a_2)y'_x}{\zav{1+{y'_x}^2}^\frac32}=0
$$
implies 
$$
y'_x=-\frac{x-a_1}{y-a_2}.
$$
Substituting this yields
$$
\sqrt{(x-a_1)^2+(y-a_2)^2}=0
$$
Therefore $x+\imag y=a$. A point. 
(A point is, indeed, a curve with the agreement that any line passing through this point is a tangent; we can think of this as a circle with zero radius).  
\end{example}

\begin{example}
Starting with the equation of an ellipse
$$
p(a)p(b)=L^2,\qquad a=a_1+\imag a_2,\ b=b_1+\imag b_2,\qquad L>0,
$$
we obtain the following differential equation
$$
\tilde f:=\zav{(x-a_1)y'_x-(y-a_2)}\zav{(x-b_1)y'_x-(y-b_2)}-L^2\zav{1+{y'_x}^2}=0.
$$
Differentiating with respect to $y'_x$ yields
$$
\frac{\partial \tilde f}{\partial y'_x}= 2y'_x\zav{(x-a_1)(x-b_1)-L^2}-(x-a_1)(y-b_2)-(x-b_1)(y-a_2).
$$
Eliminating $y'_x$ from $\tilde f$, $\frac{\partial \tilde f}{\partial y'_x}$ gives us the following singular solution:
$$
4\zav{(x-a_1)(x-b_1)-L^2}\zav{(y-a_2)(y-b_2)-L^2}=\zav{(x-a_1)(y-b_2)+(x-b_1)(y-a_2)}^2,
$$
which simplifies to a quadric:
\begin{align*}
&\zav{4L^2+(a_2-b_2)^2}x^2-2(a_2-b_2)(a_1-b_1)xy+\zav{4L^2+(a_1-b_1)^2}y^2\\
&-2\zav{2(a_1+b_1)L^2-(a_2-b_2)(a_1b_2-a_2b_2)}x
-2\zav{2(a_2+b_2)L^2+(a_1-b_1)(a_1b_2-a_2b_2)}y\\
&-4L^4+4(a_1b_1+a_2b_2)L^2+(a_1b_2-a_2b_1)^2=0,
\end{align*}
with negative discriminant
$$
\Delta:=-16L^2\zav{4L^2+(a_1-b_1)^2+(a_2-b_2)^2}< 0.
$$
Indeed an ellipse.
\bigskip

On the other hand, looking for line solutions, i.e. functions $y$ in the form
$$
y_l=\alpha x+\beta,\qquad \alpha,\beta\in\mathbb{R},
$$
we have
$$
p(a)=\frac{(x-a_1)y'_x-(y-a_2)}{\sqrt{1+{y'_x}^2}}=\frac{a_1-\alpha a_1-\beta}{\sqrt{1+\alpha^2}}.
$$
And the equation
$$
p(a)p(b)=L^2,
$$
boils down to
$$
(a_1b_1-L^2)\alpha^2+((a_1+b_1)\beta-a_2b_1-a_1b_2)\alpha+(a_2-\beta)(b_2-\beta)-L^2=0,
$$
reducing the number of free parameters to one. Thus $y_l$ is one parameter family of solutions which makes it a \textit{regular} solution.
\end{example}
\begin{example}
The differential equation corresponding to
$$
27a^2p=4p(a)^3,\qquad a\in\mathbb{R},
$$
is as follows
$$
(4a-x)(2x+a)^2 {y'_x}^3-3y(2x+a)(5a-2x){y'_x}^2+(27 a^2+12 y^2(a-x))y'_x-(27a^2-4y^2)y=0.
$$
Differentiation with respect to $y'_x$ yields
$$
3(4a-x)(2x+a)^2 {y'_x}^2-6y(2x+a)(5a-2x){y'_x}+(27 a^2+12 y^2(a-x))=0.
$$
Finally, eliminating $y'_x$ from both equation we obtain
$$
(x^2+y^2)^2-4ax(x^2+y^2)-4a^2y^2=0,
$$
which is an implicit equation of the cardioid.
\end{example}
As we saw with the example of an ellipse, its three representation that we have discussed so far -- i.e. oval, coval and pedal -- are linked together. The following result generalizes this fact.
\begin{proposition}
Consider a curve that has both two-poles oval and coval representations, i.e. it solves
$$
r(a)^2=f(r^2),\qquad \text{and} \qquad p(a)=g(p).
$$ 
Then it also satisfies a pedal equation in the form
$$
f'(r^2)=g'(p).
$$
\end{proposition}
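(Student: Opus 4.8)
The plan is to differentiate both defining relations along the curve and feed the results into the differential rule (\ref{diffrules}), which couples $r$, $p$ and $p_c$. The key observation is that the oval relation and the coval relation, though living in different coordinates, each collapse upon differentiation into an expression for the same contrapedal quantity $p_c(a)$ as a multiple of $p_c := p_c(0)$; comparing the two multipliers is exactly the asserted pedal equation. Throughout I regard all of $r,r(a),p,p(a),p_c,p_c(a),\theta$ as functions of arc-length $s$ along the curve, so that the functional identities $r(a)^2=f(r^2)$ and $p(a)=g(p)$ hold identically in $s$ and may be differentiated freely.

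First I would differentiate the oval relation with respect to $s$. Applying the chain rule to $r(a)^2=f(r^2)$ gives $2\,r(a)\,\partial_s r(a)=f'(r^2)\cdot 2\,r\,\partial_s r$, and since (\ref{diffrules}) asserts $r(a)\,\partial_s r(a)=p_c(a)$ together with the $a=0$ instance $r\,\partial_s r=p_c$, this reduces immediately to
$$
p_c(a)=f'(r^2)\,p_c .
$$
Next I would differentiate the coval relation, but now with respect to the tangential angle $\theta$, since (\ref{diffrules}) also records $\partial_\theta p(a)=p_c(a)$ and $\partial_\theta p=p_c$. The chain rule applied to $p(a)=g(p)$ then yields $\partial_\theta p(a)=g'(p)\,\partial_\theta p$, that is
$$
p_c(a)=g'(p)\,p_c .
$$
Equating the two expressions for $p_c(a)$ gives $f'(r^2)\,p_c=g'(p)\,p_c$, and cancelling $p_c$ produces the claimed identity $f'(r^2)=g'(p)$.

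The only point that needs a word beyond routine calculus — and hence the main (mild) obstacle — is the cancellation of $p_c$. The contrapedal coordinate $p_c=p_c(0)$ vanishes precisely where the origin lies on the normal line of $\gamma$, i.e. at the extremal points of $r$, which for a non-degenerate curve form a discrete set; the differentiation in $\theta$ likewise presupposes that $\theta$ is a legitimate local parameter, valid wherever the curvature $\kappa\neq 0$. On the open, dense set where $p_c\neq 0$ and $\kappa\neq 0$ the argument above goes through verbatim and delivers $f'(r^2)=g'(p)$. Since both $f'(r^2)$ and $g'(p)$ are continuous functions along $\gamma$, the identity then extends to the whole curve by continuity, completing the proof.
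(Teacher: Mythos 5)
Your proof is correct and is essentially identical to the paper's: the paper likewise differentiates the oval relation in $s$ and the coval relation in $\theta$, uses (\ref{diffrules}) to identify both results with $p_c(a)$, and divides by $p_c$. The only difference is that you explicitly justify the cancellation of $p_c$ (vanishing only on a discrete set, then continuity), a point the paper passes over silently.
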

\begin{proof}
Differentiating the oval equation with respect to the arc-length $s$ we obtain
$$
p_c(a)=f'(r^2)p_c,
$$
see the rules for differentiation (\ref{diffrules}).

Similarly, differentiating the coval equation but with respect to $\theta$ yields
$$
p_c(a)=g'(p)p_c.
$$ 
Therefore
$$
f'(r^2)=\frac{p_c(a)}{p_c}=g'(p).
$$
Which is what we want.
\end{proof}
\begin{example}
An ellipse with foci at $0$ and $a$ satisfy:
$$
r+r(a)=M,\quad pp(a)=L^2. 
$$
Thus
$$
f(x)=(M-\sqrt{x})^2,\qquad g(x)=\frac{L^2}{x},
$$
in our case with
$$
f'(x)=-\frac{M-\sqrt{x}}{\sqrt{x}}=1-\frac{M}{\sqrt{x}},\qquad g'(x)=-\frac{L^2}{x^2}.
$$
And therefore the pedal equation
$$
1-\frac{M}{r}=f'(r^2)=g'(p)=-\frac{L^2}{p^2},
$$
or
$$
\frac{L^2}{p^2}=\frac{M}{r}-1,
$$
as claimed.
\end{example}
\begin{example}
For the cardioid we have the coval representation
$$
27a^2p=4p(a)^3,\qquad \Longrightarrow \qquad p(a)=\zav{\frac{27 a^2}{4}p}^{\frac13}=:g(p),
$$
and also, its cartesian equation
$$
(x^2+y^2)^2-4ax(x^2+y^2)-4a^2y^2=0,
$$
is equivalent to 
$$
r(a)^2=2a r+a^2=:f(r^2).
$$
Thus, the resulting pedal equation takes form
$$
\frac{a}{r}=f'(r^2)=g'(p)=\frac{9a^2}{4} \zav{\frac{27 a^2}{4}p}^{-\frac23}.
$$
Simplifying we get the well-known pedal equation
$$
r^3=4a p^2.
$$
\end{example}
\subsection{Principle of triangulation.}
Similarly as with ovals, any coval can be described using only three poles.
\begin{proposition}
For any complex numbers $a,b,c,d\in\mathbb{C}$ it holds
\begin{equation}\label{triangprincgen}
\deter{
\begin{array}{cccc}
p(a) & p(b) & p(c) & p(d)  \\
1 & 1 & 1 & 1  \\
a & b & c & d  \\
\bar a & \bar b & \bar c & \bar d 
\end{array}}=0.
\end{equation}
\end{proposition}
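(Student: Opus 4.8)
The plan is to exploit the fact that the pedal coordinate $p(a)$ is an \emph{affine} function of the pole $a$, with coefficients that depend only on the point of the curve and its tangent direction, not on $a$ itself. Expanding the definition and writing $a=\Re a+\imag\,\Im a$,
$$
p(a)=-(y-\Im a)x'+(x-\Re a)y'=(xy'-yx')-(\Re a)\,y'+(\Im a)\,x',
$$
I would first observe that the quantity $xy'-yx'$ and the factors $x',y'$ all depend solely on the curve point $(x,y)$ and its unit tangent $(x',y')$, and not at all on the pole. Substituting $\Re a=\tfrac12(a+\bar a)$ and $\Im a=\tfrac{1}{2\imag}(a-\bar a)$, I can therefore write
$$
p(a)=\alpha\cdot 1+\beta\cdot a+\gamma\cdot\bar a,\qquad
\alpha:=xy'-yx',\quad \beta:=-\tfrac12 y'+\tfrac{1}{2\imag}x',\quad \gamma:=-\tfrac12 y'-\tfrac{1}{2\imag}x',
$$
where crucially the very same $\alpha,\beta,\gamma$ serve for \emph{every} pole.

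With this in hand the determinant identity is immediate. The four entries of the top row of the matrix in (\ref{triangprincgen}) are obtained by evaluating this single affine expression at the four poles $a,b,c,d$. Hence the top row equals $\alpha$ times the second row $(1,1,1,1)$, plus $\beta$ times the third row $(a,b,c,d)$, plus $\gamma$ times the fourth row $(\bar a,\bar b,\bar c,\bar d)$:
$$
\bigl(p(a),p(b),p(c),p(d)\bigr)=\alpha\,(1,1,1,1)+\beta\,(a,b,c,d)+\gamma\,(\bar a,\bar b,\bar c,\bar d).
$$
A matrix one of whose rows is a (complex) linear combination of the others is singular, so its determinant vanishes, which is exactly (\ref{triangprincgen}).

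I do not anticipate any genuine obstacle here. In contrast to the oval case (\ref{triangp}), where the triangulation rests on a squared distance $r(d)^2$ being an affine combination of three \emph{other} squared distances, and which degenerates when the poles are co-linear, the coval identity holds for \emph{all} complex $a,b,c,d$ with no genericity hypothesis, precisely because $p(a)$ is truly affine in $(\Re a,\Im a)$ rather than quadratic. The only subtlety is bookkeeping with the conjugate: one must include the $\bar a$-row so that the three structural rows $1,a,\bar a$ span the full two-dimensional affine span of the real and imaginary parts of the pole. Once that span is identified, linear dependence of the rows delivers the result with essentially no computation.
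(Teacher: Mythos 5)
Your proof is correct and is essentially the paper's own argument (which the paper states in one line: the identity "follows directly from the definition of $p(a)$ and properties of determinant"); you have simply made explicit the linear dependence $p(a)=\alpha+\beta a+\gamma\bar a$ with pole-independent coefficients, so that the first row is a combination of the other three and the determinant vanishes. No gap, and no difference in method beyond the level of detail.
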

\begin{proof}
Follows directly from the definition of $p(a)$ and properties of determinant.
\end{proof}
If the three points $a,b,c$ are colinear, the equation (\ref{triangprincgen}) becomes trivial; in this case, only two foci are really necessary. 
\begin{proposition}
Assume that the complex numbers $a,b,c\in\mathbb{C}$ are colinear, i.e. it holds
$$
b=\beta a ,\qquad c=\gamma a,
$$
for some real numbers $\beta,\gamma\in\mathbb{R}$. Then we have
\begin{equation}\label{triangprinc}
\deter{
\begin{array}{ccc}
p(a) & p(b) & p(c)   \\
1 & 1 & 1   \\
a & b & c 
\end{array}}=0.
\end{equation}
\end{proposition}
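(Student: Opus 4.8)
The plan is to exploit the \emph{affine} dependence of the pedal coordinate on its pole, which is the same mechanism underlying the general principle (\ref{triangprincgen}). First I would expand the definition of $p(a)$ directly. Writing $p(0)=xy'-yx'$ for the pedal coordinate with pole at the origin, a one-line computation from the definition yields
\begin{equation*}
p(a)=-(y-\Im a)x'+(x-\Re a)y'=p(0)+\Im(a)\,x'-\Re(a)\,y'.
\end{equation*}
Thus $p(a)-p(0)$ depends \emph{linearly} on the real and imaginary parts of the pole, and this is all the structure the proof will need.

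The decisive step uses the hypothesis that $\beta,\gamma$ are \emph{real}. Because of this, $\Re(\beta a)=\beta\Re a$ and $\Im(\beta a)=\beta\Im a$, so the linear part scales by the same real factor:
\begin{equation*}
p(b)=p(\beta a)=p(0)+\beta\bigl(\Im(a)x'-\Re(a)y'\bigr)=(1-\beta)p(0)+\beta\,p(a),
\end{equation*}
and likewise $p(c)=(1-\gamma)p(0)+\gamma\,p(a)$. I expect this to be the crux of the argument, and the only place where the reality of $\beta,\gamma$ is indispensable: had the scalars been genuinely complex we would have $\Re(\beta a)\neq\beta\Re a$, the coefficients of $x'$ and $y'$ would no longer align, and the row would instead require the conjugate row $(\bar a,\bar b,\bar c)$ — which is precisely why the general statement (\ref{triangprincgen}) needs four poles rather than three.

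With these two identities in hand the conclusion is immediate. The first row of the determinant factors as
\begin{equation*}
\bigl(p(a),p(b),p(c)\bigr)=p(0)\,(1,1,1)+\bigl(p(a)-p(0)\bigr)\,(1,\beta,\gamma),
\end{equation*}
while the second row is $(1,1,1)$ and the third row is $(a,b,c)=a\,(1,\beta,\gamma)$. Hence, as long as $a\neq 0$, the first row equals $p(0)$ times the second row plus $\tfrac{p(a)-p(0)}{a}$ times the third row, so the three rows are linearly dependent and the determinant in (\ref{triangprinc}) vanishes. The degenerate case $a=0$ forces $b=c=0$ as well, and then the bottom row is identically zero, so the determinant again vanishes trivially. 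No serious obstacle remains beyond carefully tracking the reality of $\beta,\gamma$ in the second step; everything else is bookkeeping already licensed by the linearity of $p$ in its pole.
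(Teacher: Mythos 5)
Your proof is correct, and it is exactly the argument the paper leaves implicit: the paper states this colinear proposition without proof (its four-pole predecessor is dispatched with ``follows directly from the definition of $p(a)$ and properties of determinant''), and your identity $p(\beta a)=(1-\beta)p(0)+\beta\,p(a)$ for real $\beta$, followed by the observation that the first row is a $\mathbb{C}$-linear combination of the rows $(1,1,1)$ and $(a,b,c)=a(1,\beta,\gamma)$, is precisely that argument spelled out. Your treatment of the degenerate case $a=0$ and your remark explaining why genuinely complex scalars would force the conjugate row $(\bar a,\bar b,\bar c)$ (hence four poles in the general statement) are both sound.
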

\begin{example}
Principle of triangulation allows us to change the position of foci in the given coval formula. Take, for instance, an ellipse
$$
p(1)p(-1)=L,
$$
with foci located at $-1$ and $1$. How does this formula change if we consider other points? Another natural point for our ellipse is it's center $c:=0$.

Since $1$, $-1$ and $0$ are colinear, we have
$$
\deter{
\begin{array}{ccc}
p(1) & p(-1) & p   \\
1 & 1 & 1   \\
1 & -1 & 0 
\end{array}}=0,\qquad \Longrightarrow \qquad p(-1)=2p-p(1).
$$
Substituting this we obtain a coval representation of an ellipse with it's center in the origin and one focus located at point $1$.
$$
2pp(1)-p(1)^2=L.
$$
\end{example}

\begin{example}
Other interesting points on our ellipse $p(1)p(-1)=L$ are the periapsum $x_0$ and the apoapsum $x_0$, i.e. closest and the farthest points from the focus $-1$ respectively (also known as vertexes). The tanget line at $x_{1}$ is vertical (the same is true for $x_0$). Denoting the distance of this tangent from the origin $x$ we have
$$
p(1)=x-1,\qquad p(-1)=x+1,
$$
thus
$$
p(1)p(-1)=L \qquad \Longrightarrow \qquad x^2-1=L,
$$
and therefore
$$
x_0=-\sqrt{L+1},\qquad x_1=\sqrt{1+L}.
$$
What is the coval equation with these points? Using (\ref{triangprinc}) we have
$$
\deter{
\begin{array}{ccc}
p(1) & p(x_0) & p(x_1)   \\
1 & 1 & 1   \\
1 & x_0 & x_1 
\end{array}}=0,\qquad \Longrightarrow \qquad p(1)=\frac{p(x_0)(x_1-1)-p(x_1)(x_0-1)}{x_1-x_0},
$$
and
$$
\deter{
\begin{array}{ccc}
p(-1) & p(x_0) & p(x_1)   \\
1 & 1 & 1   \\
-1 & x_0 & x_1 
\end{array}}=0,\qquad \Longrightarrow \qquad p(-1)=\frac{p(x_0)(x_1+1)-p(x_1)(x_0+1)}{x_1-x_0}.
$$
Our answer is therefore
$$
\zav{p(x_0)(x_1-1)-p(x_1)(x_0-1)}\zav{p(x_0)(x_1+1)-p(x_1)(x_0+1)}=L(x_1-x_0)^2=4L(1+L),
$$
which can be simplified to
\begin{equation}
\zav{p(x_0)+p(x_1)}^2L+4p(x_0)p(x_1)=4L(1+L).
\end{equation}

\end{example}
\begin{example}
Generally, we can choose any three non-colinear points $a,b,c\in\mathbb{C}$ as poles to describe our ellipse. Using (\ref{triangprincgen}) and some algebra we discover that our original simple formula
$$
p(1)p(-1)=L,
$$ 
transforms into an expression too lengthy to write in full. The condensed version is
$$
a_1 p(a)^2+a_2 p(b)^2+a_3 p(c)^2+a_4 p(a)p(b)+a_5p(a)p(c)+a_6p(b)p(c)=a_7,
$$
where $a_1,\dots, a_7$ are some real numbers (depending on $a,b,c$ and $L$).

This example shows that one must be very careful in choosing the right points for the poles.
\end{example}
\subsection{Poles at infinity}
As we saw, the coval equation for an ellipse is
$$
p(a)p(b)=L,\qquad L>0,
$$
where $a,b$ are its foci.

In the same way we can show that
$$
p(a)p(b)=-L,\qquad L>0,
$$
corresponds to a hyperbola.

What about a parabola? The case $L=0$ does not work. In fact, the singular solution of $p(a)p(b)=0$ is just a line segment connecting $a$ and $b$.

But a parabola can be thought as an ellipse with one of its focus "at infinity", that is we sent, say, $b$ to infinity along some direction while appropriately scaling the curve down so it does not blow up.

Specifically, we will use the following definition.
\begin{definition} Let $z\in \mathbb{C}$. We define the pedal coordinate $p(z\infty)$ with respect to a pole \textit{at infinity} as follows 
\begin{align}
p\zav{z\infty}:=\lim_{s\nearrow \infty}\frac{p\zav{sz}}{s}=p\zav{z}-p.
\end{align}
Furthermore, we will denote the space of complex numbers and points at infinity by
\begin{align}
\mathbb{C}^\star:=\mathbb{C}\cup \szav{z\infty|z\in\mathbb{C}}.
\end{align}
\end{definition}
\begin{remark}
The symbol $p(z\infty)$ is thus just a shorthand for $p(z)-p$ and any coval with poles at infinity is, actually, just an ordinary coval with all its poles finite. 
\end{remark}
Obviously, for $z\not= 0$ it holds
$$
p(z\infty)=|z|p\zav{\frac{z}{|z|}\infty},
$$ 
therefore only the phase of $z$ really matters.

\begin{example}
We now show that the singular solution of
$$
p(a)p\zav{e^{\imag\varphi}\infty}=L,
$$
is, indeed, a parabola.
 
Using the fact that 
$$
p(a)p\zav{e^{\imag\varphi}\infty}=p(a)(p(e^{\imag\varphi})-p),
$$
and using the formula (\ref{difsubs}) we obtain a second order differential equation of the form
$$
f(x,y,{y'}_x)={{y'}_x}^2 P-y'_x Q +R=0,
$$
where
\begin{align*}
P&:=L+\cos\varphi(x-a_1),\\
Q&:=\cos\varphi(y-a_2)+\sin\varphi(x-a_1),\\
R&:=L+\sin\varphi(y-a_2).
\end{align*}
Therefore
$$
\frac{\partial f(x,y,y'_x)}{\partial y'_x}=2y'_x P- Q,
$$
Eliminating $y'_x$ from both equation we obtain an algebraic second order curve
$$
Q^2-4PR=0,
$$
with second degree terms
$$
Q^2-4PR=\cos^2\varphi y^2+2\cos\varphi\sin\varphi xy+\sin^2\varphi x^2+\dots
$$
and, thus, zero discriminant. A parabola.
\end{example}
\begin{example}
Interestingly, an ellipse with both poles at infinity, i.e. the curve
$$
p\zav{e^{\imag\alpha}\infty}p\zav{e^{\imag\beta}\infty}=L,
$$
does not posses any singular solution, but the regular solutions are pairs of lines -- i.e. a degenerate conic.
\end{example}

\section{Coval description of the boundary of a numerical range.}\label{Sec3}
Looking at the definition of $p(a)$ it holds:
\begin{equation}
p(a)=p-\frac{e^{\imag\theta}}{2\imag} \bar a+\frac{e^{-\imag \theta}}{2\imag} a= p+\Im \zav{e^{-\imag \theta}a}=p+\cos\theta\Im(a)-\sin\theta \Re(a).
\end{equation}
We will see that it is natural to consider a matrix analog of this quantity.
\begin{definition}
For any matrix $A\in\mathbb{C}^n$ we define
\begin{equation}\label{pAdef}
p(A):=p I-\frac{e^{\imag\theta}}{2\imag} A^\star+\frac{e^{-\imag \theta}}{2\imag} A= pI+\Im \zav{e^{-\imag \theta}A}=pI+\cos\theta\Im(A)-\sin\theta \Re(A).
\end{equation}
\end{definition}

\begin{proposition}\label{numrangecopolar}
Let $A\in\mathbb{C}^{n\times n}$. Then the curve $\partial W(A)$ satisfies
\begin{equation}\label{copolareq}
\deter{p(A)}=0,
\end{equation}
with the condition that the matrix 
$$
p(A)\geq 0,
$$
is positive semi-definite.
\end{proposition}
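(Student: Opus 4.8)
The plan is to exploit the convexity of $W(A)$ by describing its boundary through the support function, and then to recognize $p(A)$ as a shifted Hermitian matrix whose smallest eigenvalue vanishes exactly on $\partial W(A)$. The whole argument reduces to a single eigenvalue observation once the pedal data is translated into the Kippenhahn language already hinted at in the introduction.

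First I would record the support-function formula for the numerical range. For an outward unit normal $e^{\imag\phi}$ the support function is
$$
h(\phi)=\max_{z\in W(A)}\Re\zav{e^{-\imag\phi}z}=\max_{\norm{u}=1}\Re\zav{e^{-\imag\phi}u^\star A u}=\max_{\norm{u}=1}u^\star \Re\zav{e^{-\imag\phi}A}u=\lambda_{\max}\zav{\Re\zav{e^{-\imag\phi}A}},
$$
where $\Re(e^{-\imag\phi}A)=\tfrac12(e^{-\imag\phi}A+e^{\imag\phi}A^\star)$ is Hermitian, so its numerical range is the real interval between its extreme eigenvalues. The supporting line with outward normal $e^{\imag\phi}$ is then $\szav{(X,Y):X\cos\phi+Y\sin\phi=h(\phi)}$.

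Next I would translate the pedal data into this setting. At a smooth boundary point with tangential angle $\theta$ the tangent line is $X\sin\theta-Y\cos\theta=p$, whose outward normal (for $\partial W(A)$ traversed counter-clockwise) points in the direction $\phi=\theta-\tfrac{\pi}{2}$, and the signed distance of the origin to it is exactly $p=h(\phi)$. Writing $H_\phi:=\Re(e^{-\imag\phi}A)$ and using $e^{-\imag(\theta-\pi/2)}=\imag e^{-\imag\theta}$ together with the matrix identity $\Re(\imag B)=-\Im(B)$, a direct computation gives $H_{\theta-\pi/2}=-\Im\zav{e^{-\imag\theta}A}$, so that comparison with the definition (\ref{pAdef}) yields
$$
p(A)=pI+\Im\zav{e^{-\imag\theta}A}=pI-H_{\theta-\pi/2}.
$$
The conclusion is then immediate: at a boundary point $p=h(\theta-\tfrac{\pi}{2})=\lambda_{\max}(H_{\theta-\pi/2})$, so if $\mu_1\geq\dots\geq\mu_n$ are the real eigenvalues of the Hermitian matrix $H_{\theta-\pi/2}$ then $p=\mu_1$ and $p(A)=\mu_1 I-H_{\theta-\pi/2}$ has eigenvalues $\mu_1-\mu_j\geq 0$. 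Hence $p(A)\geq 0$, and since the value $\mu_1-\mu_1=0$ occurs among them the matrix is singular, giving $\deter{p(A)}=0$.

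The main obstacle I expect is bookkeeping rather than depth: pinning down the orientation of $\partial W(A)$ and the resulting phase relation $\phi=\theta-\tfrac{\pi}{2}$ so that $p$ really equals $h(\phi)$ (and not $-h(\phi)$ or $h(\phi+\pi)$), and dealing with boundary points where the tangent is not unique. The latter is harmless because $\partial W(A)$ is a simple closed algebraic curve, so the identity $\deter{p(A)}=0$ holds on the dense set of smooth points and extends by continuity; equivalently one checks directly that at any corner each supporting line still satisfies $p=\lambda_{\max}(H_\phi)$ for its own $\phi$.
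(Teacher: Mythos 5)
Your proof is correct and follows essentially the same route as the paper's: both reduce to the fact that the numerical range of a Hermitian matrix is the interval between its extreme eigenvalues, identify the pedal coordinate $p$ at tangential angle $\theta$ with $\lambda_{\max}\zav{-\Im\zav{e^{-\imag\theta}A}}$, and conclude that $p(A)$ is a positive semi-definite singular matrix. Your support-function phrasing and the explicit phase bookkeeping $\phi=\theta-\tfrac{\pi}{2}$ merely make precise what the paper handles with its rotation argument ("a little bit of imagination"), so there is nothing substantive to add.
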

\begin{remark}
The condition of positive semi-definiteness implies the following set of $n-1$ inequalities
\begin{equation}\label{copolarineq}
\deter{p(A)_i}\geq 0,\qquad i=1\dots n-1,
\end{equation}
where, for any matrix $X$, $X_i$ denotes the $i$-th principal minor of $X$.

The equation (\ref{copolareq}) together with these inequalities (\ref{copolarineq}) gives what is known as \textit{semidefinite representation} of $\partial W(A)$ (albeit in copolar coordinates). For more info on semidefinite curves see e.g.~\cite{kellipse}.
\end{remark}
\begin{proof}
It can be shown by an optimization argument that the numerical range of a Hermitian matrix is the interval between the smallest and the largest eigenvalue, i.e.
$$
W(A)=[\min \sigma(A),\max \sigma(A)],\qquad A^\star=A.
$$
From this it is clear that
$$
\Re(W(A))=W(\Re(A))=[\min \sigma(\Re A),\max\sigma(\Re A)], \qquad \Re A:=\frac{A+A^\star}{2}
$$
for any matrix $A$. The vertical line $x=\max\sigma(\Re A)$ is actually \textit{tangent} of $\partial W(A)$ with the tangential angle $\theta=\frac{\pi}{2}$. The corresponding pedal coordinate is 
$$
p=\max\sigma(\Re A).
$$
Generally, we can rotate the matrix $A$ by an angle $\phi\in\mathbb{R}$ and compute 
$$
W(\Re e^{\imag\phi}A)=[\min \sigma(\Re e^{\imag\phi} A),\max\sigma(\Re e^{\imag\phi} A)].
$$
A little bit of imagination is needed to understand that the above gives a distance to the tangent $p$ and the tangential angle $\theta$ as follows:
$$
p=\max\sigma(\Re e^{\imag\phi} A),\qquad \theta=\frac{\pi}{2}-\phi.
$$
See Figure~\ref{numconstruction}.
\begin{figure}[h] 
\centering 
\includegraphics[scale=1,trim=1cm 1cm 1cm 0.5cm,clip]{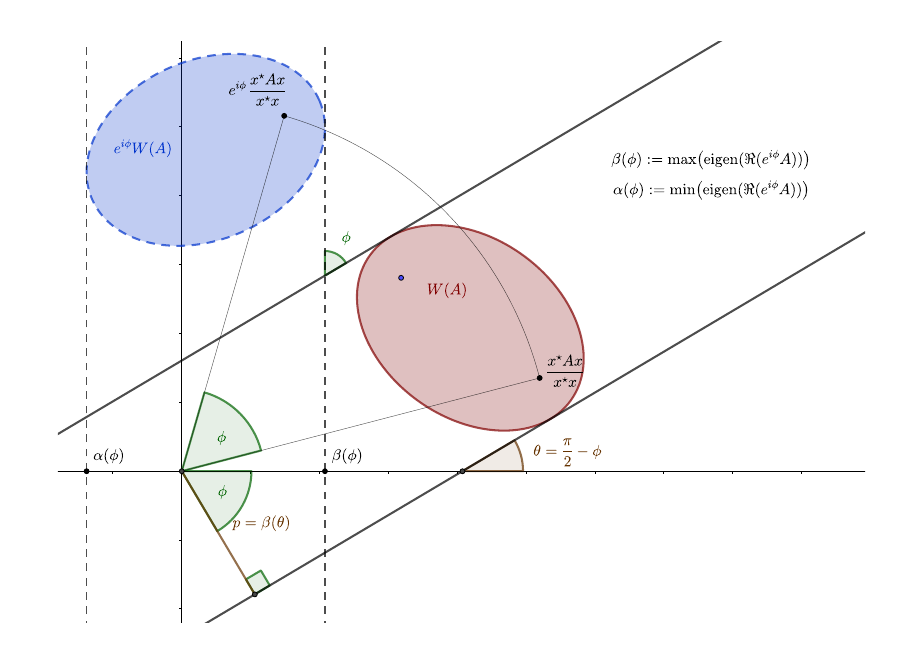}
\caption{Construction of tangents to $\partial W(A)$.}
\label{numconstruction}
\end{figure}
From this it is clear that the co-polar equation of $\partial W(A)$ is
$$
p=\max\sigma(-\Im e^{-\imag\theta} A).
$$
or, using the fact that any eigenvalue solves the characteristic equation, 
$$
|pI+\Im e^{-\imag\theta} A|=0,
$$
which proves (\ref{copolareq}).

Since $p$ is the \textit{maximal} eigenvalue of $-\Im e^{-\imag\theta} A$, the matrix
$$
p(A)=pI+\Im e^{-\imag\theta} A,
$$
is necessarily positive semi-definite. 
\end{proof}

\begin{remark}
The equation (\ref{copolareq}) is actually equivalent to the well known result due to Kipenhahn \cite{Kippenhahnorig,Kippenhahn} that the dual curve of $\partial W(A)$ is given by Kippenhahn polynomial, i.e. algebraic curve given implicitly by
$$
\deter{I-x\Re A-y\Im A}=0
$$
For the sake of brevity, we will not give any details here.
\end{remark}
We are going to show the following elegant proof of Proposition \ref{n2prop}, i.e. that the boundary of the numerical range of a 2 by 2 matrix is an ellipse.
\begin{proof}
The Shur's theorem says that every square matrix is unitary similar to an upper triangular matrix. Since the numerical range is invariant to unitary transformation,  we can work only with upper triangular matrices. Let us, therefore, assume that 
$$
A=\zav{\nadsebou{\lambda_1}{0}\nadsebou{b}{\lambda_2}}.
$$  
It is easy to see that
$$
p(A)=\zav{\nadsebou{p(\lambda_1)}{-\alpha \bar b}\nadsebou{-\bar \alpha b}{p(\lambda_2)}},\qquad \alpha:=\frac{e^{\imag \theta}}{2\imag}.
$$
Thus
$$
\deter{p(A)}=p(\lambda_1)p(\lambda_2)-\alpha\bar\alpha b\bar b=p(\lambda_1)p(\lambda_2)-\frac{|b|^2}{4}.
$$
Therefore the curve $\partial W(A)$ is described by
$$
p(\lambda_1)p(\lambda_2)=\frac{|b|^2}{4},
$$
which is an ellipse with foci at $\lambda_1$, $\lambda_2$. Furthermore the number 
$$
|b|=\sqrt{\tr (A^\star A) -|\lambda_1|^2-|\lambda_2|^2};
$$ is the minor axis.
\end{proof}
For dimensions greater than 2 we are going to need a more effective method of computation. We are going to present a kind of ``greedy algorithm''.
But first, let us prove some simple properties of the matrix $p(A)$.
\begin{proposition}
Let $A\in\mathbb{C}^{n\times n}$ with (not necessarily distinct) eigenvalues $\lambda_1,\dots \lambda_n$. Then the following holds:
\begin{align}
\frac{u^\star p(A) u}{u^\star u}&=p\zav{\frac{u^\star A u}{u^\star u}}.\\
W(p(A))&=p(W(A)):=\szav{p(z); z\in W(A)}.\\
\tr \zav{p(A)}&=n p\zav{\frac{\tr A}{n}}.
\end{align}
\end{proposition}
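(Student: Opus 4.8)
The plan is to reduce all three identities to a single elementary observation: for fixed real $p$ and $\theta$, the scalar $p(a)$ is an affine function of $a$, namely $p(a)=p+\Im\zav{e^{-\imag\theta}a}$, and the matrix $p(A)=pI+\Im\zav{e^{-\imag\theta}A}$ is obtained from $A$ by exactly the same affine recipe, with the scalar imaginary part replaced by the matrix imaginary part $\Im M:=\frac{M-M^\star}{2\imag}$. Once this parallel structure is made explicit, each claim becomes a matter of transporting the scalar identity through a standard operation: the Rayleigh quotient for the first, the numerical range (which is just the range of Rayleigh quotients) for the second, and the trace for the third. So I would prove the first identity directly and then obtain the other two as essentially immediate consequences.

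For the first identity I would start from the definition
$$
p(A)=pI+\frac{e^{-\imag\theta}}{2\imag}A-\frac{e^{\imag\theta}}{2\imag}A^\star,
$$
compute $u^\star p(A)u$ term by term, and divide by $u^\star u$. The only ingredient needed is the standard fact that $u^\star A^\star u=\overline{u^\star A u}$, since this quantity is a scalar equal to the conjugate of its own conjugate transpose. Writing $z:=\frac{u^\star A u}{u^\star u}$, the two $A$-terms collapse to $\frac{1}{2\imag}\zav{e^{-\imag\theta}z-\overline{e^{-\imag\theta}z}}=\Im\zav{e^{-\imag\theta}z}$, so that $\frac{u^\star p(A)u}{u^\star u}=p+\Im\zav{e^{-\imag\theta}z}=p(z)=p\zav{\frac{u^\star A u}{u^\star u}}$, as required.

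The second identity then follows with no further work: by definition $W(p(A))$ is the set of all Rayleigh quotients $\frac{u^\star p(A)u}{u^\star u}$ over $u\neq 0$, and the first identity rewrites each such quotient as $p\zav{\frac{u^\star A u}{u^\star u}}$; letting $u$ range over all nonzero vectors makes $\frac{u^\star A u}{u^\star u}$ sweep out exactly $W(A)$, so the resulting image is precisely $p(W(A))=\szav{p(z);\ z\in W(A)}$. For the third identity I would take the trace of the same defining expression for $p(A)$, using $\tr I=n$ and $\tr A^\star=\overline{\tr A}$; with $t:=\tr A$ the two $A$-terms again recombine into $\Im\zav{e^{-\imag\theta}t}$, giving $\tr\zav{p(A)}=np+\Im\zav{e^{-\imag\theta}t}=n\zav{p+\tfrac{1}{n}\Im\zav{e^{-\imag\theta}t}}=n\,p\zav{\tfrac{t}{n}}$.

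There is no real obstacle here; the computation is entirely mechanical once the conjugation rules $u^\star A^\star u=\overline{u^\star A u}$ and $\tr A^\star=\overline{\tr A}$ are in hand. The only point demanding a little care is bookkeeping the factors $e^{\pm\imag\theta}/(2\imag)$ so that the two non-Hermitian pieces recombine into the single imaginary-part expression $\Im\zav{e^{-\imag\theta}\,\cdot\,}$; recognizing that recombination is exactly what makes all three formulas drop out of the same one-line scalar identity.
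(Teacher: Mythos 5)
Your proof is correct and takes essentially the same approach as the paper: the first identity is established by exactly the same Rayleigh-quotient computation, using linearity of $u^\star(\cdot)u$ and the conjugation rule $u^\star A^\star u=\overline{u^\star A u}$ to recombine the two non-Hermitian terms into $\Im\zav{e^{-\imag\theta}\,\cdot\,}$. The paper then dismisses the second and third identities as ``an easy exercise,'' and your arguments (the range of Rayleigh quotients for $W(p(A))$, and trace linearity with $\tr A^\star=\overline{\tr A}$) are precisely the intended completions.
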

\begin{proof}
The first formula is due to the fact that 
$$
\frac{u^\star p(A) u}{u^\star u}=\frac{u^\star \zav{pI-\bar\alpha A-\alpha A^\star} u}{u^\star u}=p-\bar\alpha \frac{u^\star A u}{u^\star u}-\alpha \frac{u^\star A^\star u}{u^\star u}=p\zav{\frac{u^\star A u}{u^\star u}}.
$$
Here
$$
\alpha:=\frac{e^{2\imag \theta}}{2\imag}.
$$
The rest is an easy exercise.
\end{proof}

\subsection{Greedy algorithm}
\begin{lemma}\label{greedyalglemma}
Let $f(x,y,z)$ be a complex polynomial in real variables $x$, $y$, $z$ with the following properties:
\begin{align*}
1)& & \forall t\in\mathbb{R}:\qquad f(tx,ty,tz)&=t^{m}f(x,y,z), & \text{homogeneous of degree $m$,}\\
2)& & \overline{f(x,y,z)}&=f(x,z,y), & \text{symmetric in the last variables.}
\end{align*}
Define a function $\tilde f(x,y,z)$ by the following rules:
\begin{align}
	\intertext{ If $\forall x:$ $f(x,0,1)= 0$ then} 
		\tilde f(x,y,z)&:=\frac{f(x,y,z)}{yz}.\\
	\intertext{ If $f(x,0,1)=\gamma \prod_{j=1}^m(x- \lambda_j) $, $\gamma\not=0$ then}
	\label{Lemmamcase}
	\tilde f(x,y,z)&:=\frac{f(x,y,z)-f(1,0,0)\prod\limits_{j=1}^m(x-\bar\lambda_j y-\lambda_j z)}{yz}.\\
	\intertext{ If $f(x,0,1)=\gamma\prod_{j=1}^k(x-\lambda_j)$, $\gamma\not=0$, $0\leq k<m$, then}
	\label{Lemmakcase}
	\tilde f(x,y,z)&:=\frac{f(x,y,z)-\prod\limits_{j=1}^k(x-\bar\lambda_j y-\lambda_j z)\prod\limits_{\beta^{m-k}=-\gamma}(-\bar\beta y-\beta z)}{yz}.	
	\end{align}
Then the function $\tilde f$ is a homogeneous polynomial of degree $m-2$ which is symmetric in the  last variables. In other words, it holds
\begin{align*}
 \forall t\in\mathbb{R}:\qquad \tilde f(tx,ty,tz)&=t^{m-2}\tilde f(x,y,z), &
 \overline{\tilde f(x,y,z)}&=\tilde f(x,z,y). & 
\end{align*}
\end{lemma}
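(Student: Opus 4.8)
The plan is to verify three things about $\tilde f$ in turn: that it is genuinely a polynomial (i.e.\ that $yz$ divides the numerator in each case), that it is homogeneous of degree $m-2$, and that it inherits the conjugation symmetry $\overline{\tilde f(x,y,z)}=\tilde f(x,z,y)$. Homogeneity is the cheap part: in every case the subtracted term is a product of exactly $m$ linear forms in $x,y,z$ (in case (\ref{Lemmakcase}), $k$ forms from the first product and $m-k$ from the second), hence homogeneous of degree $m$ like $f$ itself; so the numerator is homogeneous of degree $m$, and dividing by the degree-$2$ form $yz$ leaves degree $m-2$. Thus everything hinges on divisibility and symmetry.

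For divisibility, since $y$ and $z$ are distinct irreducible polynomials, $yz$ divides the numerator $N$ if and only if $N(x,0,z)\equiv 0$ and $N(x,y,0)\equiv 0$. First I would compute the restriction of $f$ to the hyperplane $y=0$. By homogeneity $f(x,0,z)=z^{m}f(x/z,0,1)$ for $z\neq 0$, so from the given factorization of $f(x,0,1)$ one reads off $f(x,0,z)=\gamma z^{m-k}\prod_{j=1}^{k}(x-\lambda_j z)$ in case (\ref{Lemmakcase}) (with $k=m$ in case (\ref{Lemmamcase}), and $f\equiv 0$ on $y=0$ in the first case). Setting $y=0$ in the subtracted term and using the Vieta identity $\prod_{\beta^{m-k}=-\gamma}(-\beta z)=\gamma z^{m-k}$ for the product over the $(m-k)$-th roots of $-\gamma$, one checks the subtracted term equals exactly this same polynomial, so $N(x,0,z)\equiv 0$. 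In case (\ref{Lemmamcase}) the analogous check uses $f(1,0,0)=\gamma$, which is forced because $f(x,0,0)=\gamma x^{m}$.

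The restriction to $z=0$ is where the conjugates in the definition do their work, and this is the step I expect to be the main obstacle --- not because it is deep, but because it requires matching the subtracted term factor-by-factor with the right complex conjugations. The key is the symmetry relation, which at $z=0$ reads $f(x,y,0)=\overline{f(x,0,y)}$; combined with the formula for $f$ on $y=0$ this gives $f(x,y,0)=\bar\gamma\, y^{m-k}\prod_{j=1}^{k}(x-\bar\lambda_j y)$. On the other side, setting $z=0$ in the subtracted term and again applying Vieta (now to $\prod_\beta(-\bar\beta y)$, using $\overline{\prod_\beta\beta}$) reproduces precisely $\bar\gamma\, y^{m-k}\prod_{j}(x-\bar\lambda_j y)$; the reason the $\bar\lambda_j$ and $\bar\beta$ appear in the definition is exactly so that this conjugated expression comes out right. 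Hence $N(x,y,0)\equiv 0$, and together with the $y=0$ case we conclude $yz\mid N$, so $\tilde f$ is a polynomial of the claimed degree.

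Finally, for the symmetry of $\tilde f$ it suffices to prove that the subtracted term $S$ satisfies $\overline{S(x,y,z)}=S(x,z,y)$, since $f$ already does and $yz$ is real and invariant under swapping $y,z$. Conjugating $S$ turns each factor $x-\bar\lambda_j y-\lambda_j z$ into $x-\lambda_j y-\bar\lambda_j z$ and each factor $-\bar\beta y-\beta z$ into $-\beta y-\bar\beta z$ (the root set $\{\beta:\beta^{m-k}=-\gamma\}$ being permuted among these factors), which is visibly $S(x,z,y)$ after reordering; in case (\ref{Lemmamcase}) one uses in addition that $\gamma$ is real, which follows from $f(x,0,0)=\gamma x^m$ being real for real $x$. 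Thus $N=f-S$ is symmetric, and dividing by the symmetric form $yz$ shows $\tilde f$ is symmetric, completing all three verifications.
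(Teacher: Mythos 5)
Your proof is correct, and it handles the crucial third case by a genuinely different route than the paper. The paper's own proof treats Case 1 and Case 2 essentially as you do (restrict to $y=0$ via homogeneity, restrict to $z=0$ via the conjugation symmetry, conclude divisibility by $yz$), but for the case $0\le k<m$ it does \emph{not} argue directly: it introduces the deformation $f_s:=sf+(1-s)x^{m-k}\prod_{j=1}^k(x-\bar\lambda_j y-\lambda_j z)$, observes that for $s<1$ the polynomial $f_s(x,0,1)$ has full degree $m$ with the extra roots $\beta_l\left(\tfrac{s}{1-s}\right)^{1/(m-k)}$, applies Case 2 to $f_s$, and obtains the formula \eqref{Lemmakcase} in the limit $s\nearrow 1$ as those roots escape to infinity. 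You instead verify \eqref{Lemmakcase} directly, using Vieta's identity $\prod_{\beta^{m-k}=-\gamma}(-\beta z)=\gamma z^{m-k}$ (and its conjugate at $z=0$) to show the subtracted term matches $f$ on both hyperplanes $y=0$ and $z=0$. Your route is more elementary and self-contained: it needs no limiting process and no argument that polynomiality, degree, homogeneity and symmetry survive the limit, and it also makes explicit (rather than "obvious") why the conjugates $\bar\lambda_j$, $\bar\beta$ in the definition produce the symmetry $\overline{\tilde f(x,y,z)}=\tilde f(x,z,y)$. What the paper's deformation buys in exchange is motivation: the roots running off to infinity along the directions $\beta_l$ is precisely the phenomenon behind the "poles at infinity" $p(z\infty)$ appearing in Theorem~\ref{maintint} and the remark following it, so the limit argument explains where formula \eqref{Lemmakcase} comes from, not merely that it works. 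One small presentational point in your write-up: the reality of $\gamma=f(1,0,0)$ (which follows from the symmetry hypothesis evaluated at $y=z=0$) is needed already for the $z=0$ divisibility check in case \eqref{Lemmamcase}, not only for the final symmetry verification where you invoke it; this is cosmetic, since you do establish it.
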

\begin{proof} 
Notice that by homogeneity $f(x,0,1)$ is a polynomial in $x$ of degree at most $m$. Thee proof is split into three cases:
\bigskip

\textbf{Case 1).} The polynomial $f(x,0,1)$ is identically zero (in other words, it has degree $-\infty$). Therefore the polynomial $f(x,y,z)$ as a function of $y$ has a zero in $y=0$ since $f(x,0,z)=z^m f\zav{x/z,0,1}=0$ (and, by continuity, $f(x,0,0)=0$). Therefore $f$ can be written as $f(x,y,z)=y f_2(x,y,z)$. Because of the symmetry, there is also the factor of $z$: $f(x,y,z)=\overline{f(x,z,y)}=z\overline{f_2(x,z,y)}$. Thus the function
$$
\tilde f(x,y,z):=\frac{f(x,y,z)}{yz},
$$  
is, indeed, a polynomial. The homogeneity and the symmetry are obvious.
\bigskip

\textbf{Case 2).} The polynomial $f(x,0,1)$ is of degree $m$. Therefore by Fundamental theorem of Algebra there exists complex numbers $\lambda_j$ (not necessarily distinct) and a complex number $\gamma\not=0$ such that $f(x,0,1)=\gamma \prod_{j=1}^m(x- \lambda_j) $. 
 
Note that we have $\gamma=f(1,0,0)$ since 
$$
f(x,y,z)=f(x,0,0)+O(y,z)=x^mf(1,0,0)+O(y,z).
$$ 
Also $\gamma$ is a real number, since $\gamma=f(1,0,0)=\overline{f(1,0,0)}=\bar \gamma$ by symmetry.
We now show that 
$$
f(x,0,z)=f(1,0,0)\prod_{j=1}^m\zav{x-\lambda_j z},\qquad
f(x,y,0)=f(1,0,0)\prod_{j=1}^m\zav{x-\bar\lambda_j y}.
$$
This can be seen purely algebraically:
\begin{align*}
f(x,0,z)&=z^m f\zav{\frac{x}{z},0,1}=z^m f(1,0,0)\prod_{j=1}^m\zav{\frac{x}{z}-\lambda_j}=f(1,0,0)\prod_{j=1}^m\zav{x-\lambda_j z}.\\
\intertext{And}
f(x,y,0)&=\overline{f(x,0,y)}=\overline{y^mf\zav{\frac{x}{y},0,1}}= \overline{y^m f(1,0,0)\prod_{j=1}^m\zav{\frac{x}{y}-\lambda_j}}\\
&=
\overline{f(1,0,0)}\prod_{j=1}^m \zav{x-\bar \lambda_j y}=f(1,0,0)\prod_{j=1}^m \zav{x-\bar \lambda_j y}.
\end{align*}
Hence the numerator of $\tilde f$ in (\ref{Lemmamcase}) has zero both in $y=0$ and $z=0$ and thus $\tilde f$ itself is, indeed, a polynomial. 

The fact that $\tilde f$ is homogeneous of degree $m-2$ as well as symmetric, is obvious.
\bigskip

\textbf{Case 3).} The polynomial $f(x,0,1)$ has degree $0\leq k<m$ therefore it is equal to $f(x,0,1)=\gamma\prod_{j=1}^k(x-\lambda_j)$ for some complex numbers $\gamma,\lambda_j$. We are going to proceed by a limit argument. We introduce the following polynomial
$$
f_s(x,y,z):=sf(x,y,z)+(1-s)x^{m-k}\prod\limits_{j=1}^k\zav{x-\bar\lambda_j y-\lambda_j z},\qquad s\in [0,1],
$$
which interpolates between $f_1(x,y,z)=f(x,y,z)$ and $f_0(x,y,z)=x^{m-k}\prod_{j=1}^k\zav{x-\bar\lambda_j y-\lambda_j z}$. Note that $f_s$ is homogeneous of degree $m$ and symmetric in the last variables. Also, crucially, 
$$
f_s(1,0,0)=1-s\not=0,\qquad  s<1, 
$$ 
and there are $m$ solutions of 
$$
f_s(x,0,1)=sf(x,0,1)+(1-s)x^{m-k}\prod_{j=1}^k(x-\lambda_j)=\prod_{j=1}^k(x-\lambda_j)\zav{s\gamma+(1-s)x^{m-k}}=0,
$$ 
specifically $\lambda_j$ for $j=1\dots k$ and 
$$
\lambda_{k+l+1}=\beta_l \zav{\frac{s}{1-s}}^{\frac{1}{m-k}},\qquad \beta_l:=(-\gamma)^{\frac{1}{m-k}}e^{\frac{2\pi\imag l}{m-k}},\qquad l=0\dots m-k-1.
$$
Therefore we can apply the previous case to obtain
\begin{align*}
\tilde f_s(x,y,z)&=\frac{f_s(x,y,z)-(1-s)\prod\limits_{j=1}^{m}(x-\bar \lambda_j y-\lambda_j z)}{yz}\\
&=\frac{f_s(x,y,z)-(1-s)\prod\limits_{j=1}^k(x-\bar\lambda_j y-\lambda_j z)\prod\limits_{l=0}^{m-k-1}\zav{x-\bar \beta_l \zav{\frac{s}{1-s}}^{\frac{1}{m-k}}y-\beta_l \zav{\frac{s}{1-s}}^{\frac{1}{m-k}}z}}{yz},
\end{align*}
is a $m-2$ degree homogeneous polynomial which is symmetric in $y,z$.
Taking the limit $s\nearrow 1$ yields (\ref{Lemmakcase}).

\end{proof}
Let us introduce a notation that simplifies things a bit for the following discussion.
\begin{definition}
Let $A\in\mathbb{C}^{n\times n}$. We will denote by $\bar A$ \textit{any} member of $\mathbb{C}^{n\times n}$ such that the following properties holds
\begin{align}
\bar AA&=A\bar A,\\
\overline{Q^{-1}AQ}&=Q^{-1}\bar A Q,\qquad \forall Q: \deter{Q}\not=0,\\
\text{If }Au&=\lambda u\qquad \text{  then  }\qquad\bar A u=\bar \lambda u.
\end{align}
\end{definition}
%
\begin{remark}  
For matrices $A$ with distinct eigenvalues the matrix $\bar A$ is unique and it corresponds to a standard matrix function $f(A)$ for $f(z):=\bar z$. In the degenerate case the matrix $\bar A$ is not unique but it does not matter for our purposes.  We are introducing this notation just for the sake of brevity, since we can write, e. g.
$$
\tr \bar A=\sum_{j=1}^n \overline{\lambda_j}=\overline{\tr A},\qquad \tr \zav{A\bar A}=\sum_{j=1}^n\abs{\lambda_j}^2, 
$$
or
$$
\tr \zav{{\rm adj }(xI-A)\bar A}=\deter{xI-A}\sum_{j=1}^n\frac{\overline{\lambda_j}}{x-\lambda_j},\qquad \forall x\not\in\sigma(A). 
$$
\end{remark}
\begin{definition}
For $A\in\mathbb{C}^{n\times n}$ let
$$
t_k(A):=\tr A^k\zav{A^\star -\bar A}=\tr A^k A^\star-\sum_{j=1}^n \lambda_j^k\bar \lambda_j,
$$
where $\lambda_j$-s are the eigenvalues of $A$.
\end{definition}
\begin{remark}
The quantity $t_1(A)\geq 0$ is just the sum of absolute values of all the off-diagonal entries of the Schur form of $A$ and it measures how much the matrix $A$ deviates from being normal, since if $A^\star A=AA^\star$ we have $t_1(A)=0$. 
\end{remark}
We are going to prove Theorem~\ref{maintint}. Let us restate it with more details.
\begin{theorem}\label{maint}
Let $A\in\mathbb{C}^{n\times n}$, $n\geq 2$. Then we have
\begin{equation}\label{covalpa}
\deter{p(A)}=\prod_{j=1}^{n}p(\lambda_j)-\frac{t_1(A)}{4}\prod_{j=1}^{n-2}p\zav{\lambda^{(2)}_j}+\sum_{k=2}^{\trunc{\frac{n}{2}}}\frac{\gamma_k}{4^k}\prod_{j=1}^{n-2k}p\zav{\lambda^{(k+1)}_{j}}.
\end{equation}
Here, $\gamma_k\in \mathbb{R}$,  $\lambda_j$ are the eigenvalues of $A$ and $\lambda^{(2)}_j$ are solutions of 
\begin{equation}\label{anticharacteristiceq}
\tr {\rm adj}\zav{x I-A}\zav{A^\star-\bar A}=0.
\end{equation}
The remaining poles might be at infinity, i.e. $\lambda_{j}^{(k)}\in \mathbb{C}^\star$ for $k>2$ in general. 
\end{theorem}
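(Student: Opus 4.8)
The plan is to realize $\det p(A)$ as the evaluation of a single homogeneous polynomial to which the greedy algorithm of Lemma~\ref{greedyalglemma} can be applied repeatedly. First I would introduce the three-variable polynomial
$$
f(x,y,z):=\det\zav{xI-yA^\star-zA},
$$
and check it meets the hypotheses of the lemma: it is homogeneous of degree $m=n$, and using $\det M=\det M^T$ together with $\overline{A^\star}=A^T$ one gets $\overline{f(x,y,z)}=\det(xI-yA^T-z\bar A)=\det(xI-yA-zA^\star)=f(x,z,y)$, the required conjugate-symmetry in the last two variables. The crucial point is the substitution
$$
x=p,\qquad y=\frac{e^{\imag\theta}}{2\imag},\qquad z=-\frac{e^{-\imag\theta}}{2\imag}=\bar y,
$$
under which $f$ becomes exactly $\det p(A)$, the product $yz$ collapses to the constant $\tfrac14$, and every linear form $x-\bar\lambda y-\lambda z$ turns into the pedal coordinate $p(\lambda)$. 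Thus each factor produced by the lemma becomes a pedal coordinate, and each division by $yz$ contributes a factor of $\tfrac14$.

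Next I would run the greedy algorithm inductively, producing a chain $f=\tilde f_{-1}\mapsto \tilde f_0\mapsto \tilde f_1\mapsto\cdots$ of homogeneous, conjugate-symmetric polynomials whose degrees drop by two at each step. Writing the Case 2 identity of the lemma as $\tilde f_{k-1}=\gamma_k\prod_{j=1}^{n-2k}(x-\bar\lambda^{(k+1)}_j y-\lambda^{(k+1)}_j z)+yz\,\tilde f_k$ with $\gamma_k:=\tilde f_{k-1}(1,0,0)$ and telescoping, the accumulated powers of $yz=\tfrac14$ give exactly the coefficients $4^{-k}$ of \eqref{covalpa}, and upon substitution every extracted product becomes $\gamma_k\prod_j p(\lambda^{(k+1)}_j)$. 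The recursion terminates once the degree reaches $0$ or $1$: a degree-$0$ symmetric homogeneous polynomial is a real constant (the $k=n/2$ term when $n$ is even), while a degree-$1$ one is automatically of the shape $\gamma(x-\bar\lambda y-\lambda z)$, i.e. a single pedal coordinate (the $k=\trunc{n/2}$ term when $n$ is odd). Reality of each $\gamma_k$ follows since $f(1,0,0)$ is fixed by the swap $y\leftrightarrow z$ and hence equals its own conjugate, a property inherited by every $\tilde f_k$.

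The heart of the argument is identifying the first two levels. At level $0$ one has $f(x,0,1)=\det(xI-A)=\prod_j(x-\lambda_j)$ with $\gamma_0=f(1,0,0)=1$, so the first extracted product is $\prod_j p(\lambda_j)$ over the eigenvalues. The decisive computation is level $1$: differentiating in $y$ at $y=0$ via Jacobi's formula,
$$
\tilde f_0(x,0,1)=\frac{\partial}{\partial y}\Big|_{y=0}\Big(f(x,y,1)-\prod_j(x-\bar\lambda_j y-\lambda_j)\Big)=-\tr\hzav{{\rm adj}(xI-A)(A^\star-\bar A)},
$$
where the product term is handled by the identity $\tr\zav{{\rm adj}(xI-A)\bar A}=\sum_j\bar\lambda_j\prod_{k\neq j}(x-\lambda_k)$ from the remark on $\bar A$. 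This is, up to sign, precisely the anticharacteristic polynomial \eqref{anticharacteristiceq}, so its roots are the secondary values $\lambda^{(2)}_j$. Its $x^{n-1}$ coefficient is $-\tr(A^\star-\bar A)=0$, so there are at most $n-2$ of them, and its $x^{n-2}$ coefficient reduces to $-\tr\zav{(A-(\tr A)I)(A^\star-\bar A)}=-t_1(A)$, giving $\gamma_1=-t_1(A)$ and hence the term $-\tfrac{t_1(A)}{4}\prod_j p(\lambda^{(2)}_j)$.

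I expect the main obstacle to be this level-$1$ computation and its consequences: extracting the correct leading coefficients through Jacobi's formula and the $\bar A$-trace identities, checking the vanishing of the $x^{n-1}$ term, and recognizing $t_1(A)$. A secondary technical point is the bookkeeping of the degenerate branches (Cases 1 and 3 of the lemma) at levels $k>2$: when the leading coefficient of $\tilde f_{k-1}(x,0,1)$ drops, the lemma supplies factors of the form $-\bar\beta y-\beta z$, which under the substitution become $p(\beta\infty)$, accounting for the poles at infinity asserted for $k>2$. Since the characteristic polynomial is always monic of full degree, and the anticharacteristic polynomial has degree exactly $n-2$ whenever $A$ is non-normal (and vanishes identically, killing the whole tail, when $A$ is normal so that $A^\star=\bar A$), the poles $\lambda^{(1)}_j$ and $\lambda^{(2)}_j$ remain finite, as claimed.
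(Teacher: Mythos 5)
Your proposal is correct and takes essentially the same approach as the paper: the same polynomial $f(x,y,z)=\det\zav{xI-yA^\star-zA}$, the same iterative use of Lemma~\ref{greedyalglemma}, the same substitution $x=p$, $y=\frac{e^{\imag\theta}}{2\imag}$, $z=-\frac{e^{-\imag\theta}}{2\imag}$ (so that $yz=\tfrac14$ and each linear form becomes a pedal coordinate), the same Jacobi-formula identification $\tilde f(x,0,1)=-\tr{\rm adj}(xI-A)(A^\star-\bar A)$, and the same normality dichotomy ($t_1(A)=0$ iff $A$ is normal) ruling out infinite poles at levels one and two. The only organizational difference is that you verify $\gamma_1=-t_1(A)$ inline by extracting the $x^{n-2}$ coefficient of the adjugate, whereas the paper defers this to its later explicit formula (\ref{anticharexplicit}).
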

\begin{remark}
In the equation (\ref{covalpa}) we are adhering to the standard custom that ``empty'' sums and products equals to their respective neutral elements:
$$
\forall f:\qquad \sum_{k=2}^{1}f(k):=0,\qquad \prod_{j=1}^0f(j):=1.
$$
\end{remark}
\begin{proof}
Let 
$$
f(x,y,z):=\deter{xI-yA^\star -z A}.
$$
The function $f$ obviously satisfy hypothesis of Lemma~\ref{greedyalglemma}, that is $f$ is a homogeneous polynomial of degree $n$, and symmetric in the last variables. The leading coefficient is $f(1,0,0)=1$ and the roots of
$$
f(x,0,1)=\deter{xI-A},
$$
are the eigenvalues of $A$ denoted $\lambda_j$. Therefore we have by Lemma~\ref{greedyalglemma}
$$
\tilde f(x,y,z)=\frac{f(x,y,z)-\prod\limits_{j=1}^n (x-\bar \lambda y -\lambda z)}{yz}=\frac{\deter{xI-yA^\star -z A}-\deter{xI-y\bar A -z A}}{yz}.
$$
Or, in another words,
$$
\deter{xI-yA^\star -z A}=\deter{xI-y\bar A -z A}+yz \tilde f(x,y,z).
$$
We are going to repeat the process on $\tilde f$. Using the well known formula
$$
\frac{\partial\deter{X}}{\partial y}=\tr \zav{{\rm adj}(X) \frac{\partial X}{\partial y}}.
$$
we can see that 
$$
\tilde f(x,0,1)=-\tr {\rm adj}(xI-A)(A^\star-\bar A),
$$
The $n-2$ roots of this polynomial are denoted $\lambda_j^{(2)}$.
Later we will see an effective method how to verify that the leading coefficient is indeed
$$
\tilde f(1,0,0)=-t_1(A).
$$ 
This has an interesting consequence in the sense that when the quantity $t_1(A)$ is zero, the matrix is normal, thus $\bar A=A^\star$ and $\tilde f(x,y,z)\equiv 0$ identically. Therefore we either have all $n-2$ roots $\lambda^{(2)}_j$ or the process terminates after first step. Nothing in between. The third case of Lemma~\ref{greedyalglemma} hence does not apply and we can write generally
$$
\tilde{\tilde f}(x,y,z)=\frac{\tilde f(x,y,z)+t_1(A)\prod\limits_{j=1}^{n-2}(x-\overline{\lambda^{(2)}_j}y-\lambda_j^{(2)}z)}{yz} 
$$ 
or
\begin{equation}\label{detergreedymain}
\deter{xI-yA^\star -z A}=\deter{xI-y\bar A -z A}-yz t_1(A) \prod\limits_{j=1}^{n-2}(x-\overline{\lambda^{(2)}_j}y-\lambda_j^{(2)}z)+(yz)^2 \tilde{\tilde f}(x,y,z).
\end{equation}
Following iterations are much less clear, but successive application of Lemma~\ref{greedyalglemma} guaranties that there exists expansion of the form:
\begin{equation}\label{detergreedy}
(yz)^2\tilde{\tilde f}(x,y,z)=\sum_{k=2}^{\trunc{\frac{n}{2}}}\gamma_k (yz)^k\prod_{j=1}^{l_k}\zav{x-y\overline{\lambda^{(k+1)}_{j}}-z\lambda^{(k+1)}_{j}}\hspace{-5mm}\prod_{\beta^{n-2k-l_k}=\delta_k}\hspace{-5mm}\zav{-\bar \beta y -\beta z}.
\end{equation}
for some complex numbers $\lambda^{(k+1)}_j, \delta_k$.

The proof now follows simply substituting $x=p$, $ y=\frac{e^{\imag \theta}}{2\imag}$, $z=-\frac{e^{-\imag \theta}}{2\imag}$ into (\ref{detergreedymain},\ref{detergreedy}).

\end{proof}
\begin{remark}
The proof actually tells us more about the potential poles at infinity.
Say for some matrix $A$ we have that
$
\lambda_1^{(k)},\dots\lambda_m^{(k)} 
$
are at infinity for a fixed $k$,i.e.
$$
\lambda_j^{(k)}=\omega_j\infty,\qquad j=1\dots m.
$$
Then it follows that there exist a complex number $q\in\mathbb{C}$ such that
$$
\omega_j^m=q,\qquad \forall j=1,\dots,m.
$$
\end{remark}
\begin{example}
For a normal matrix $A$ we have 
$$
\deter{p(A)}=\prod_{j=1}^{n}p(\lambda_j),
$$
since the Schur form of $A$ is diagonal. 
Any curve that solves 
\begin{equation}\label{numrangenormal}
\prod_{j=1}^{n}p(\lambda_j)=0
\end{equation}
must satisfy 
$$
p(\lambda_k)=0,
$$
for some $k$ solutions of which is either a line passing through $\lambda_k$ or the point $\lambda_k$ itself.
The general solutions of (\ref{numrangenormal}) is thus any curve that is piece-wise a segment of a line passing through some $\lambda_k$ or these points where it can turn sharply.

The boundary of the convex hull of the eigenvalues is obviously one such curve. Furthermore, it is the only solution that encloses a convex set containing all the eigenvalues. 

This therefore reproduces the well known result that the numerical range of a normal matrix is the convex hull of its eigenvalues.
\end{example}

It is possible to get an explicit form for the equation (\ref{anticharacteristiceq}) in terms of $t_j(A)$ and eigenvalues of $A$.
\begin{lemma}
Let $A\in\mathbb{C}^{n\times n}$ with eigenvalues $\lambda_1,\dots, \lambda_n$. It holds
\begin{equation}\label{anticharexplicit}
\tr {\rm adj}(xI-A)(A^\star -\bar A)=\sum_{j=1}^{n-1}\sum_{k=0}^{n-1-j}t_j(A)x^{n-1-j-k}(-1)^k e_k(A),
\end{equation}
where $e_k(A)$ are elementary symmetric polynomials
$$
e_k(A):=\sum_{1\leq j_1\leq j_2\leq\dots \leq j_k\leq n }\lambda_{j_1}\cdots \lambda_{j_n}.
$$
\end{lemma}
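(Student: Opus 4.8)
The plan is to expand $\mathrm{adj}(xI-A)$ as a matrix polynomial in $x$ and then pair it with $A^\star-\bar A$ under the trace. First I would record the characteristic polynomial in the convention $\deter{xI-A}=\sum_{k=0}^{n}(-1)^k e_k(A)\,x^{n-k}$, with $e_0(A)=1$, and use the defining identity $(xI-A)\,\mathrm{adj}(xI-A)=\deter{xI-A}\,I$. Writing $\mathrm{adj}(xI-A)=\sum_{m=0}^{n-1}x^{\,n-1-m}M_m$ and matching coefficients of equal powers of $x$ yields the recursion $M_0=I$, $M_m=AM_{m-1}+(-1)^m e_m(A)I$, whose closed form (an immediate induction) is
\[
M_m=\sum_{i=0}^{m}(-1)^i e_i(A)\,A^{m-i}.
\]
Equivalently, one obtains the same expansion from the Neumann series $(xI-A)^{-1}=\sum_{\ell\ge 0}A^{\ell}x^{-\ell-1}$, valid for $|x|>\|A\|$, together with $\mathrm{adj}(xI-A)=\deter{xI-A}(xI-A)^{-1}$.

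Next I would multiply by $A^\star-\bar A$ and take the trace, giving
\[
\tr\,\mathrm{adj}(xI-A)(A^\star-\bar A)=\sum_{m=0}^{n-1}x^{\,n-1-m}\sum_{i=0}^{m}(-1)^i e_i(A)\,\tr A^{m-i}(A^\star-\bar A).
\]
By the very definition of $t_\ell(A)$ the inner trace equals $t_{m-i}(A)$. The decisive point is that the diagonal term $i=m$ disappears, because
\[
t_0(A)=\tr(A^\star-\bar A)=\overline{\tr A}-\overline{\tr A}=0,
\]
using $\tr A^\star=\overline{\tr A}$ and $\tr\bar A=\sum_j\bar\lambda_j=\overline{\tr A}$. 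Hence only the indices with $m-i\ge 1$ contribute, and no spurious term carrying $A^0$ survives.

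Finally, setting $j:=m-i\ge 1$ and $k:=i\ge 0$ turns the exponent of $x$ into $n-1-j-k$ and the coefficient into $(-1)^k e_k(A)\,t_j(A)$; the original ranges $0\le i\le m-1$ and $0\le m\le n-1$ translate precisely into $1\le j\le n-1$ and $0\le k\le n-1-j$, which is exactly (\ref{anticharexplicit}).

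I expect the only genuine care to lie in the bookkeeping rather than in any deep idea: pinning down the sign convention relating $\deter{xI-A}$ to the $e_k(A)$, verifying the closed form for $M_m$, and checking that the reindexing reproduces the stated summation limits, together with the essential vanishing $t_0(A)=0$. If one prefers the generating-function route, the single conceptual remark is that $\deter{xI-A}\sum_{j\ge 1}t_j(A)x^{-j-1}$ is identically a polynomial in $x$ (being a trace of entries of the adjugate), so its negative-power terms cancel automatically and one may simply read off the nonnegative powers to recover the same double sum.
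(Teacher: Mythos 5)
Your proposal is correct and follows essentially the same route as the paper: both establish the expansion $\mathrm{adj}(xI-A)=\sum_{m=0}^{n-1}x^{n-1-m}\sum_{i=0}^{m}(-1)^i e_i(A)A^{m-i}$ from the defining identity $(xI-A)\,\mathrm{adj}(xI-A)=\deter{xI-A}I$ together with Cayley--Hamilton (the paper verifies the closed double sum directly, you derive it by coefficient-matching and induction, which is the same computation organized as a recursion), and then take the trace against $A^\star-\bar A$ to convert $A^j$ into $t_j(A)$. Your explicit observation that the $j=0$ term drops out because $t_0(A)=\tr(A^\star-\bar A)=0$ is a welcome detail that the paper leaves implicit when its sum starts at $j=1$.
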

\begin{remark}
In particular we have
\begin{align}
n&=2 & \tr {\rm adj}(x I-A)(A^\star-\bar A)&=t_1(A).\\
n&=3 & \tr {\rm adj}(x I-A)(A^\star-\bar A)&=t_1(A)(x-e_1(A))+t_2(A).\\
n&=4 & \tr {\rm adj}(x I-A)(A^\star-\bar A)&=t_1(A)(x^2-e_1(A)x+e_2(A))+t_2(A)(x-e_1(A))+t_3(A).
\end{align}

\end{remark}
\begin{proof}
Remember, the generating function for elementary symmetric polynomials $e_k(A)$ is 
$$
\deter{xI-A}=\sum_{k=0}^n x^{n-k}(-1)^ke_k(A).
$$ 
since any matrix is a solution of its characteristic equation (Cayley–Hamilton theorem) we have
$$
\sum_{k=0}^n A^{n-k}(-1)^ke_k(A)=0.
$$
We claim that
\begin{equation}\label{adjformula}
{\rm adj}(xI-A)=\sum_{j=0}^{n-1}\sum_{k=0}^{n-1-j} A^j x^{n-1-j-k}(-1)^k e_k(A).
\end{equation}
To prove this, observe
\begin{align*}
x{\rm adj}(xI-A)&=\sum_{j=0}^{n-1}\sum_{k=0}^{n-1-j} A^j x^{n-j-k}(-1)^k e_k(A),\\
\intertext{shifting the summation index $j\to j+1$ we get}
&=\sum_{j=-1}^{n-2}\sum_{k=0}^{n-2-j} A^{j+1} x^{n-1-j-k}(-1)^k e_k(A),\\
&=\sum_{j=-1}^{n-2}\sum_{k=0}^{n-1-j} A^{j+1} x^{n-1-j-k}(-1)^k e_k(A)-\sum_{j=-1}^{n-2}A^{j+1}(-1)^{n-1-j}e_{n-1-j}(A).
\end{align*}
\begin{align*}
\intertext{In the second term let change the summation index as follows $j=n-1-k$:}
\sum_{j=-1}^{n-2}A^{j+1}(-1)^{n-1-j}e_{n-1-j}(A)&=\sum_{k=1}^{n}A^{n-k}(-1)^{k}e_{k}(A)=-A^n,\\
\intertext{due to Caley-Hamilton theorem. In the first term we split the summation into three parts: }
\sum_{j=-1}^{n-2}\sum_{k=0}^{n-1-j} A^{j+1} x^{n-1-j-k}(-1)^k e_k(A)&=
\zav{\sum_{j=0}^{n-1}\sum_{k=0}^{n-1-j}+\delta_{j,-1}\sum_{k=0}^{n}-\delta_{j,n-1}\delta_{k,0}}A^{j+1} x^{n-1-j-k}(-1)^k e_k(A)\\
&=A{\rm adj}(xI-A)+\sum_{k=0}^n x^{n-k}(-1)^ke_k(A)I-A^n\\
&=A{\rm adj}(xI-A)+\deter{x-A}I-A^n.
\end{align*}
Substituting both these result into the above we obtain
$$
x{\rm adj}(xI-A)=A{\rm adj}(xI-A)+\deter{xI-A}I,
$$
or
$$
(xI-A){\rm adj}(xI-A)=\deter{xI-A}I,
$$
the defining property of the matrix ${\rm adj}(xI-A)$.
The formula (\ref{anticharexplicit}) now follows by summing over $j$ in (\ref{adjformula}).
\end{proof}
\subsection{Numerical range of a 3 by 3 matrix}
\begin{corollary}\label{3by3cor}
Let $A\in\mathbb{C}^{3\times 3}$ with $\lambda_1,\lambda_2,\lambda_3$ as its eigenvalues. Then the curve $\deter{p(A)}=0$ satisfies
\begin{equation}\label{covalpa3}
p(\lambda_1)p(\lambda_2)p(\lambda_3)=\frac{t_1(A)}{4}p(\chi),
\end{equation}
where the number $\chi$ is given by 
\begin{equation}\label{lambda2formula}
\chi:=\tr A-\frac{t_2(A)}{t_1(A)},\qquad t_1(A)\not=0.
\end{equation}
Also
\begin{equation}\label{chiinWA}
\chi\in W(A).
\end{equation}
\end{corollary}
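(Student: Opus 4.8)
The plan is to obtain the coval equation \eqref{covalpa3} by specializing the main theorem and then to spend the real effort on the membership \eqref{chiinWA}. For $n=3$ the upper limit in Theorem~\ref{maint} is $\trunc{3/2}=1$, so the sum $\sum_{k=2}^{1}$ in \eqref{covalpa} is empty and the determinant collapses to
$$
\deter{p(A)}=p(\lambda_1)p(\lambda_2)p(\lambda_3)-\frac{t_1(A)}{4}\,p\zav{\lambda^{(2)}_1}.
$$
Imposing $\deter{p(A)}=0$, which by Proposition~\ref{numrangecopolar} describes $\partial W(A)$, and setting $\chi:=\lambda^{(2)}_1$ gives \eqref{covalpa3} at once.

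To identify $\chi$ I would use that, by Theorem~\ref{maint}, the single secondary value $\lambda^{(2)}_1$ is the unique root of the anticharacteristic equation \eqref{anticharacteristiceq}. Its explicit form for $n=3$ is given by \eqref{anticharexplicit}, namely $\tr{\rm adj}(xI-A)(A^\star-\bar A)=t_1(A)\zav{x-e_1(A)}+t_2(A)$. Since $e_1(A)=\tr A$, solving this linear equation—which is legitimate exactly when $t_1(A)\neq 0$—yields $\chi=\tr A-t_2(A)/t_1(A)$, establishing \eqref{lambda2formula}.

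The substantive part is \eqref{chiinWA}. As $W$ is invariant under unitary similarity, by Schur's theorem I may take $A$ upper triangular,
$$
A=\begin{pmatrix}\lambda_1 & a & b\\ 0 & \lambda_2 & c\\ 0 & 0 & \lambda_3\end{pmatrix}.
$$
A direct trace computation (using $t_k(A)=\tr A^kA^\star-\sum_j\lambda_j^k\bar\lambda_j$, so that $\bar A$ never appears explicitly) gives $t_1(A)=|a|^2+|b|^2+|c|^2$ and $t_2(A)=|a|^2(\lambda_1+\lambda_2)+|b|^2(\lambda_1+\lambda_3)+|c|^2(\lambda_2+\lambda_3)+ac\bar b$, whence
$$
\chi=\frac{|c|^2\lambda_1+|b|^2\lambda_2+|a|^2\lambda_3-ac\bar b}{|a|^2+|b|^2+|c|^2}.
$$
The decisive step is to realize $\chi$ as a Rayleigh quotient: I claim the column vector $u$ with entries $u_1=\bar c$, $u_2=-\bar b$, $u_3=\bar a$ works. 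Indeed $u^\star u=|a|^2+|b|^2+|c|^2=t_1(A)\neq 0$, so $u\neq 0$, and in
$$
u^\star Au=\lambda_1|c|^2+\lambda_2|b|^2+\lambda_3|a|^2+a\bar u_1u_2+b\bar u_1u_3+c\bar u_2u_3
$$
the two terms $b\bar u_1u_3=bc\bar a$ and $c\bar u_2u_3=-bc\bar a$ cancel, leaving $a\bar u_1u_2=-ac\bar b$; hence $u^\star Au/u^\star u=\chi$ and $\chi\in W(A)$.

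The only genuinely creative obstacle is guessing $u$. Everything else is dictated by the earlier results and short computations; the form of $\chi$ (eigenvalues $\lambda_1,\lambda_2,\lambda_3$ weighted by $|c|^2,|b|^2,|a|^2$) forces $|u_1|^2=|c|^2$, $|u_2|^2=|b|^2$, $|u_3|^2=|a|^2$ up to phases, and matching the single surviving cross term $-ac\bar b$ then pins down those phases.
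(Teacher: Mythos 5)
Your proposal is correct and follows essentially the same route as the paper: the coval equation and the formula for $\chi$ are read off from Theorem~\ref{maint} and (\ref{anticharexplicit}), and the membership $\chi\in W(A)$ is proved via the Schur form and the same Rayleigh quotient — your column vector $(\bar c,-\bar b,\bar a)^T$ is exactly the conjugate transpose of the paper's row vector $u=(c_1,-b_2,b_1)$, giving the identical quotient. The only cosmetic difference is that you bypass computing $\bar A$ explicitly by using $t_k(A)=\tr A^kA^\star-\sum_j\lambda_j^k\bar\lambda_j$, which the paper instead obtains from the displayed form of $A^\star-\bar A$.
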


\begin{proof}
The form of (\ref{covalpa3}) is the direct consequence of Theorem~\ref{maint}. The formula (\ref{lambda2formula}) follows from (\ref{anticharexplicit}).

For the second fact (\ref{chiinWA}), suppose we have a Schur form of $A$:
$$
A:=\zav{\begin{array}{ccc}
\lambda_1 & b_1 & b_2 \\
0 & \lambda_2 & c_1 \\
0 & 0 & \lambda_3 
\end{array}
},
$$
Then
$$
A^\star=\zav{\begin{array}{ccc}
\bar \lambda_1 & 0 & 0 \\
\bar b_1 & \bar \lambda_2 & 0 \\
\bar b_2 & \bar c_1 & \bar \lambda_3 
\end{array}
},
\qquad
\bar A=\zav{\begin{array}{ccc}
\bar \lambda_1 & * & * \\
0 & \bar \lambda_2 & * \\
0 & 0 & \bar \lambda_3 
\end{array}
},
$$
where stars $*$ represents some numbers that are irrelevant for the subsequent discussion.
Thus
$$
A^\star-\bar A =\zav{\begin{array}{ccc}
0 & * & * \\
\bar b_1 & 0 & * \\
\bar b_2 & \bar c_1 & 0 
\end{array}
},
$$
and
$$
t_1(A)=\tr A(A^\star-\bar A)=\tr \zav{\begin{array}{ccc}
\lambda_1 & b_1 & b_2 \\
0 & \lambda_2 & c_1 \\
0 & 0 & \lambda_3 
\end{array}
}\zav{\begin{array}{ccc}
0 & * & * \\
\bar b_1 & 0 & * \\
\bar b_2 & \bar c_1 & 0 
\end{array}
}=|b_1|^2+|b_2|^2+|c_1|^2.
$$
Also
\begin{align*}
t_2(A)&=\tr A^2(A^\star-\bar A)=\tr \zav{\begin{array}{ccc}
\lambda_1^2 & b_1(\lambda_1+\lambda_2) & b_2(\lambda_1+\lambda_3)+b_1c_1 \\
0 & \lambda_2^2 & c_1(\lambda_2+\lambda_3) \\
0 & 0 & \lambda_3^2 
\end{array}
}\zav{\begin{array}{ccc}
0 & * & * \\
\bar b_1 & 0 & * \\
\bar b_2 & \bar c_1 & 0 
\end{array}
}\\
&=|b_1|^2(\lambda_1+\lambda_2)+|b_2|^2(\lambda_1+\lambda_3)+|c_1|^2(\lambda_2+\lambda_3)+\bar b_2 b_1 c_1.
\end{align*}
Substituting these values for $t_1(A), t_2(A)$ into (\ref{lambda2formula}) yields
\begin{equation}\label{chiexplicit}
\chi=\frac{u A u^\star}{u u^\star},\qquad u:=(c_1,-b_2,b_1).
\end{equation}
\end{proof}
\begin{example}
Perhaps the best feature of Theorem~\ref{maint} is that it allows us to find matrices with prescribed Kippenhahn curve -- \textit{if} a coval representation of the Kippenhahn curve is known.

Take, for instance, the cardioid (\ref{covalcardioid}):
$$
4p(a)^3=27a^2p,\qquad a>0.
$$
and say we want to find matrix $A$ such that
$$
\deter{p(A)}=p(a)^3-\frac{27}{4}a^2 p.
$$
It is evident from Corollary~\ref{3by3cor} that $A$ has to have an eigenvalue $a$ (with multiplicity 3), and that
$$
t_1(A)=27a^2,\qquad \chi=0.
$$
Therefore the Schur form of $A$ must be
$$
A=\zav{\begin{array}{ccc}
a & b_1 & b_2 \\
0 & a & c_1 \\
0 & 0 & a 
\end{array}
},
$$
and the numbers $b_1,b_2,c_1$ must satisfy
$$
|b_1|^2+|b_2|^2+|c_1|^2=27a^2,\qquad \bar b_2b_1c_1=27a^3.
$$
 These are 3 equation (the third is the conjugation of the second equation) on 6 real unknown. Or -- if we restrict ourselves to real matrices, 2 equations for three unknowns.

One particularly nice solution is 
$$
b_1=b_2=c_1=3a,
$$
and the resulting matrix is in the form
$$
A=a\zav{\begin{array}{ccc}
1 & 3 & 3 \\
0 & 1 & 3 \\
0 & 0 & 1 
\end{array}
}.
$$
In a later publication we are going to see that the same line of reasoning can be applied to many more curves, particularly to the family of so-called \textit{sinusoidal spirals}, which the cardioid is a member of.
\end{example}

Two facts are immediately obvious from the formula (\ref{covalpa3}).
\begin{corollary}
If the number $\chi$ coincides with some eigenvalue, say $\chi=\lambda_1$
the curve $\deter{p(A)}=0$ reduces to an ellipse an a point. Specifically
$$
\deter{p(A)}=p(\lambda_1)\zav{p(\lambda_2)p(\lambda_3)-\frac{t_1(A)}{4}}.
$$
\end{corollary}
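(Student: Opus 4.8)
The plan is to read the result directly off the coval representation already established in Corollary~\ref{3by3cor}. Rewriting (\ref{covalpa3}) in the equivalent form
$$
\deter{p(A)}=p(\lambda_1)p(\lambda_2)p(\lambda_3)-\frac{t_1(A)}{4}p(\chi),
$$
I would simply impose the hypothesis $\chi=\lambda_1$. Since the pedal coordinate $p(\cdot)$ depends only on its pole argument, the equality $\chi=\lambda_1$ forces $p(\chi)=p(\lambda_1)$, so the two terms on the right share the common factor $p(\lambda_1)$. Factoring it out yields
$$
\deter{p(A)}=p(\lambda_1)\zav{p(\lambda_2)p(\lambda_3)-\frac{t_1(A)}{4}},
$$
which is exactly the claimed identity. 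No genuine computation is required beyond this one substitution and factorization.

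To justify the geometric phrase \emph{``an ellipse and a point''}, I would interpret each factor as its own coval. The vanishing of the first factor, $p(\lambda_1)=0$, has as its singular solution the single point $\lambda_1$ — this is precisely the worked computation carried out for the coval $p(a)=0$ earlier in Section~\ref{covalsec}. The vanishing of the second factor, $p(\lambda_2)p(\lambda_3)=\frac{t_1(A)}{4}$, is the coval equation of an ellipse with foci $\lambda_2,\lambda_3$ and semi-minor axis $\frac12\sqrt{t_1(A)}$, by Proposition~\ref{ellipsecovalP}. The curve $\deter{p(A)}=0$ is therefore the union of these two loci.

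There is essentially no obstacle here: the statement is an immediate consequence of the factored form of $\deter{p(A)}$ together with the two already-proven facts about the covals $p(a)=0$ and $p(a)p(b)=L^2$. The only point deserving a word of care is that $p(\lambda_1)=0$ must be read through its singular solution — the point $\lambda_1$, viewed as a degenerate circle of zero radius — rather than as the full family of lines through $\lambda_1$; but this convention has already been fixed in the text.
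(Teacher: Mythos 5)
Your proposal is correct and matches the paper's own treatment: the paper presents this corollary as ``immediately obvious'' from the coval representation (\ref{covalpa3}) (equivalently, from the $n=3$ case of Theorem~\ref{maint}, which gives the identity $\deter{p(A)}=p(\lambda_1)p(\lambda_2)p(\lambda_3)-\frac{t_1(A)}{4}p(\chi)$), and your substitution $\chi=\lambda_1$ followed by factoring out $p(\lambda_1)$ is precisely that argument. Your additional justification of the ``ellipse and a point'' reading via the singular solutions of $p(\lambda_1)=0$ and $p(\lambda_2)p(\lambda_3)=\frac{t_1(A)}{4}$ is consistent with the examples and Proposition~\ref{ellipsecovalP} already established in the paper.
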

\begin{remark}
This result is not new -- see \cite[Theorem 2.2]{RodmanSpitkovsky}. But from the coval representation (\ref{covalpa3}) it is obvious.
\end{remark}
\begin{remark}
The corresponding numerical range boundary $\partial W(A)$ is therefore, in this case, either ellipse, i.e. the singular solution of
$$
p(\lambda_2)p(\lambda_3)-\frac{t_1(A)}{4},
$$
if $\chi$ lies inside it or a comet of the same ellipse with its tail consisting of two tangent line segments connecting the point $\chi$ and the ellipse. 
\end{remark}

\begin{remark}
The point $\chi$ thus effectively behaves like sort of  ``anti-eigenvalue'' in the sense that when it coincides with one of the eigenvalues the resulting numerical range behaves as if the eigenvalue was never there (ignoring the possible tail segments). In other words, $\chi$ ``annihilates'' eigenvalues.

(Since the term ``anti-eigenvalue'' is already taken, see \cite{Gustafson1}, we will refer to the number $\chi$ by other name -- ``the secondary value of $A$''. )

We will see that this behavior of $\chi$ is (partially) maintained even for all dimensions of the matrix $A$.  
\end{remark}
The second observation we can made is the following.
\begin{corollary}\label{innerlines3}
Any line connecting $\chi$ with some eigenvalue $\lambda_j$, $j=1,2,3$ is a tangent of $\deter{p(A)}=0$. Moreover, these three lines are the only tangents of $\deter{p(A)}=0$ that passes through $\chi$.
\end{corollary}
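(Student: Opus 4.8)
The plan is to read both claims directly off the two-term coval equation (\ref{covalpa3}) of Corollary~\ref{3by3cor}, using two elementary facts: a line with pedal data $(p,\theta)$ is a tangent of the curve $\deter{p(A)}=0$ precisely when its coordinates satisfy that equation, and the pedal coordinate $p(w)$ of a pole $w\in\mathbb{C}$ vanishes on a given line exactly when the line passes through $w$.

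First I would record the single observation that does all the work. For the line $\ell_j$ joining $\chi$ with $\lambda_j$ (which is well defined precisely because we are in the regime $t_1(A)\neq 0$, where $\chi$ exists, and may assume $\chi\neq\lambda_j$), both $p(\chi)=0$ and $p(\lambda_j)=0$ hold on $\ell_j$. Substituting into (\ref{covalpa3}) makes the left-hand side vanish through the factor $p(\lambda_j)$ and the right-hand side vanish through the factor $p(\chi)$, so $\ell_j$ solves the coval equation and is therefore a tangent. This settles the first assertion for each $j=1,2,3$.

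For the converse I would show any tangent through $\chi$ must be one of the $\ell_j$. Such a tangent has $p(\chi)=0$, so (\ref{covalpa3}) forces $p(\lambda_1)p(\lambda_2)p(\lambda_3)=\frac{t_1(A)}{4}p(\chi)=0$; since $t_1(A)\neq 0$, at least one factor $p(\lambda_j)$ vanishes, meaning the line also passes through $\lambda_j$. A line through the two distinct points $\chi$ and $\lambda_j$ is unique, so the tangent coincides with $\ell_j$. Letting $j$ range over $1,2,3$ exhausts all tangents through $\chi$.

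I do not expect a genuine obstacle: the content is essentially the substitution $p(\chi)=p(\lambda_j)=0$ into the two-term coval (\ref{covalpa3}). The only points needing a word of care are the standing nondegeneracy assumptions that make the statement meaningful, namely $t_1(A)\neq 0$ (so that $\chi$ is defined and the right-hand side of (\ref{covalpa3}) is genuinely present) and $\chi\neq\lambda_j$ (so that the joining line $\ell_j$ exists). When instead $\chi=\lambda_j$ one falls into the degenerate situation of the preceding corollary, where $p(\lambda_j)$ factors out of $\deter{p(A)}$ and every line through that point becomes a tangent. I would also remark in passing that the three lines $\ell_1,\ell_2,\ell_3$ are distinct unless $\chi$ happens to be collinear with two of the eigenvalues.
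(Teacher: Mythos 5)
Your proposal is correct and follows essentially the same route as the paper: substituting $p(\chi)=0$ into the two-term coval equation (\ref{covalpa3}) to force $p(\lambda_j)=0$ for some $j$, and conversely observing that a line through both $\chi$ and $\lambda_j$ annihilates both sides of the equation. Your additional care about the nondegeneracy assumptions ($t_1(A)\neq 0$, $\chi\neq\lambda_j$) and the uniqueness of the joining line only makes explicit what the paper leaves implicit.
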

\begin{proof}
Consider a general point $a\in\mathbb{C}$. Any line that passes through $a$ satisfies $p(a)=0$. So for a line going through $\chi$ we have $p(\chi)=0$ and substituting this into (\ref{covalpa3}) we get
$$
p(\lambda_1)p(\lambda_2)p(\lambda_3)=\frac{t_1(A)}{4}p(\chi)=0.
$$
Thus a necessary condition for this line to be a solution of (\ref{covalpa3}) (and thus a tangent) is $p(\lambda_j)=0$ for some $j$. See Figure~\ref{threelines2component}.
\end{proof}
\begin{figure}[ht] 
\centering 
\includegraphics[scale=0.40,trim=2cm 9cm 2cm 2cm,clip]{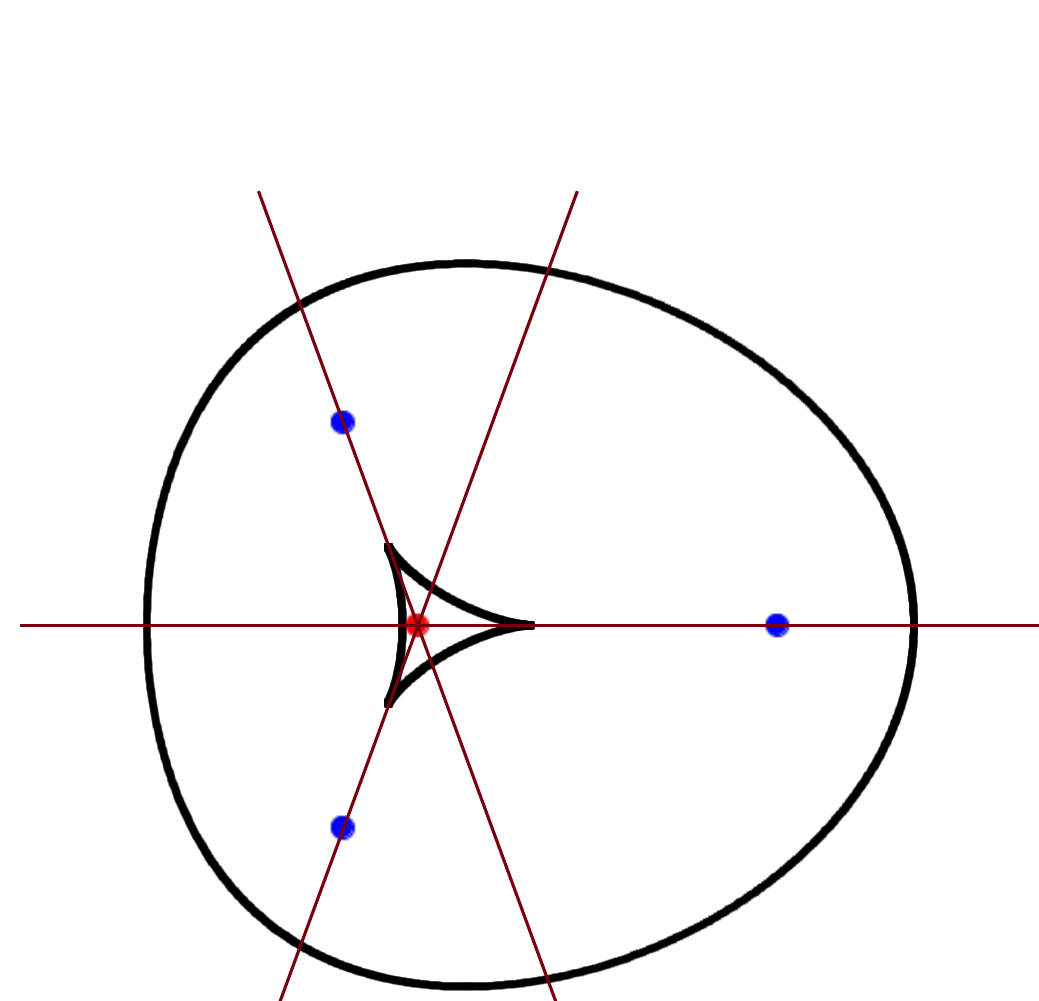}
\includegraphics[scale=0.40,trim=2cm 9cm 2cm 2cm,clip]{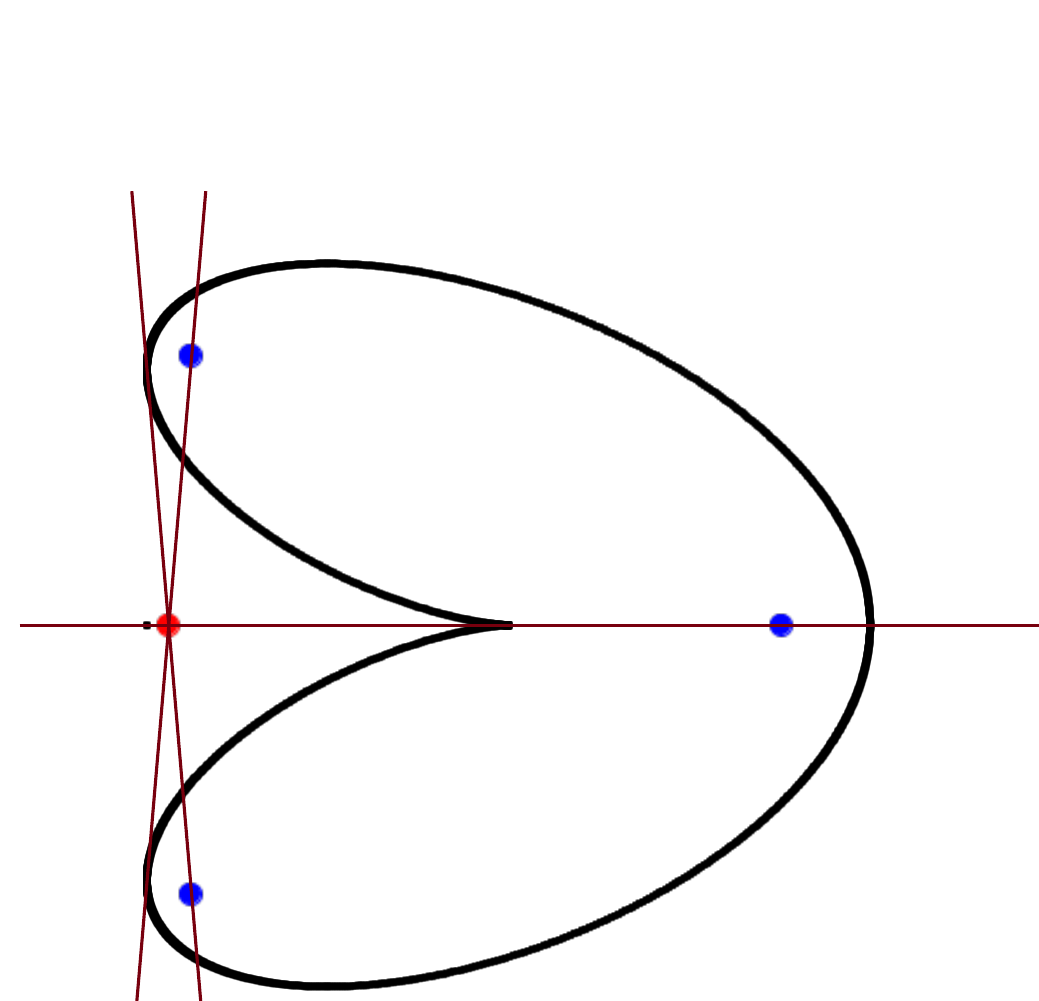}
\caption{Kippenhahn curves of a 3 by 3 matrix and lines connecting the eigenvalues with the ``anti eigenvalue''. On the left two component case, on the right a single component.}
\label{threelines2component}
\end{figure}

\subsection{Numerical range of a 4 by 4 matrix}
\begin{corollary}\label{4by4case}
Let $A\in\mathbb{C}^{4\times 4}$ with $\lambda_1,\lambda_2,\lambda_3, \lambda_4$ as its eigenvalues. Then the curve $\deter{p(A)}=0$ satisfies
\begin{equation}\label{covalpa4}
p(\lambda_1)p(\lambda_2)p(\lambda_3)p(\lambda_4)=\frac{t_1(A)}{4}p(\chi_+)p(\chi_-)-\frac{\gamma_2}{16},
\end{equation}
where $\chi_{\pm}$ are solutions of 
\begin{equation}\label{lambda2jformula}
t_1(A) x^2- (t_1(A)\tr(A)-t_2(A))x-\deter{A}t_{-1}(A)=0,
\end{equation}
and 
\begin{equation}
\gamma_2:=\deter{A}\zav{e_2\zav{A^{-1}A^\star}-e_2\zav{A^{-1}\bar A }}+t_1(A)\zav{\chi_+\overline{\chi_-}+\overline{\chi_+}\chi_-}.
\end{equation}
Also
\begin{equation}\label{centroid4by4}
\chi:=\frac{\chi_++\chi_-}{2}\in W(A).
\end{equation}
\end{corollary}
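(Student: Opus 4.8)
The plan is to derive all four assertions from Theorem~\ref{maint} and the explicit anticharacteristic formula~(\ref{anticharexplicit}), specialized to $n=4$. First, since $\trunc{4/2}=2$, Theorem~\ref{maint} collapses to $\deter{p(A)}=\prod_{j=1}^4 p(\lambda_j)-\frac{t_1(A)}4 p\zav{\lambda_1^{(2)}}p\zav{\lambda_2^{(2)}}+\frac{\gamma_2}{16}$, because the $k=2$ factor $\prod_{j=1}^{n-2k}$ is an empty product. Writing $\chi_\pm:=\lambda^{(2)}_{1},\lambda^{(2)}_2$ and setting $\deter{p(A)}=0$ yields~(\ref{covalpa4}) immediately, so the only content of the first claim is that the two secondary poles are finite — which is guaranteed by Theorem~\ref{maint}, where $\lambda^{(2)}_j\in\C$.

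Next I would read off~(\ref{lambda2jformula}). By~(\ref{anticharacteristiceq}) the secondary values solve $\tr\,{\rm adj}(xI-A)(A^\star-\bar A)=0$, and by~(\ref{anticharexplicit}) for $n=4$ this polynomial is $t_1(A)(x^2-e_1 x+e_2)+t_2(A)(x-e_1)+t_3(A)$, whose leading and middle coefficients are already $t_1(A)$ and $-(t_1(A)\tr A-t_2(A))$. It remains to identify the constant term $t_1e_2-t_2e_1+t_3$ with $-\deter{A}t_{-1}(A)$. This is the one genuine computation here: multiply the Cayley--Hamilton relation by $A^{-1}$ to get $\deter{A}A^{-1}=-A^3+e_1A^2-e_2A+e_3 I$, apply $\tr\bigl((\,\cdot\,)(A^\star-\bar A)\bigr)$, and use $\tr\bigl(A^k(A^\star-\bar A)\bigr)=t_k(A)$ together with $t_0(A)=\tr(A^\star-\bar A)=0$. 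This gives $\deter{A}t_{-1}(A)=-t_3+e_1 t_2-e_2 t_1$, exactly the required constant term; Vieta's formulas then produce both~(\ref{lambda2jformula}) and $\chi_++\chi_-=\tr A-t_2(A)/t_1(A)$.

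For $\gamma_2$ I would return to the greedy expansion~(\ref{detergreedymain}). For $n=4$ the remainder $\tilde{\tilde f}$ has degree $n-4=0$, so by~(\ref{detergreedy}) it is the constant $\gamma_2$, and~(\ref{detergreedymain}) reads $\deter{xI-yA^\star-zA}=\deter{xI-y\bar A-zA}-yz\,t_1(A)(x-\overline{\chi_+}y-\chi_+z)(x-\overline{\chi_-}y-\chi_-z)+(yz)^2\gamma_2$. Comparing the coefficient of $y^2z^2$ isolates $\gamma_2$: the cross term contributes $-t_1(A)(\overline{\chi_+}\chi_-+\chi_+\overline{\chi_-})$, and each determinant contributes its own $y^2z^2$ coefficient. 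Setting $x=0$ and using $\deter{yM+zA}=\deter{A}\sum_{k}y^kz^{4-k}e_k(A^{-1}M)$ (valid for invertible $A$ and extending to all $A$ by density, since both sides are polynomial in the entries of $A$), that coefficient equals $\deter{A}\,e_2(A^{-1}M)$ for $M=A^\star$ and $M=\bar A$ respectively. Assembling these three pieces reproduces exactly $\gamma_2=\deter{A}\zav{e_2(A^{-1}A^\star)-e_2(A^{-1}\bar A)}+t_1(A)\zav{\chi_+\overline{\chi_-}+\overline{\chi_+}\chi_-}$.

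Finally, for~(\ref{centroid4by4}) I would work in a Schur form $A=D+N$ with $D$ diagonal and $N$ strictly upper triangular, so that $A^\star-\bar A$ has zero diagonal, and compute $t_1(A)=\sum_{i<j}\abs{N_{ij}}^2$ and $t_2(A)=\sum_{i<j}(\lambda_i+\lambda_j)\abs{N_{ij}}^2+\sum_{i<k<j}N_{ik}N_{kj}\overline{N_{ij}}$ by direct trace evaluation, exactly as in Corollary~\ref{3by3cor}; Vieta then gives $\chi=\frac12\zav{\tr A-t_2(A)/t_1(A)}$. The remaining task, and the main obstacle, is to place this number inside $W(A)$. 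Unlike the $3\times3$ case, where $\chi$ was a single Rayleigh quotient, the factor $\frac1{n-2}=\frac12$ forces an averaging: I would exhibit two nonzero vectors $u,v\in\C^4$ built from the entries $N_{ij}$ with $\frac12\zav{\frac{u^\star Au}{u^\star u}+\frac{v^\star Av}{v^\star v}}=\chi$, whence $\chi\in W(A)$ by convexity of $W(A)$; equivalently, one finds a two-dimensional subspace whose compression of $A$ has trace $\tr A-t_2(A)/t_1(A)$, placing $\chi$ at the centre of that compression's elliptical numerical range. The real difficulty is matching the cubic term $\sum_{i<k<j}N_{ik}N_{kj}\overline{N_{ij}}$ against the off-diagonal contributions $N_{ij}\bar u_i u_j$, which is where essentially all of the bookkeeping resides.
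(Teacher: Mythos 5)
Your treatment of the first three claims is correct and in fact more explicit than the paper's own proof, which dismisses them with ``the only nontrivial fact here is (\ref{centroid4by4})'': the specialization of Theorem~\ref{maint} to $n=4$ (with $\trunc{4/2}=2$ and the empty product for $k=2$), the Cayley--Hamilton identification $\deter{A}t_{-1}(A)=-t_3(A)+e_1(A)t_2(A)-e_2(A)t_1(A)$ of the constant term using $t_0(A)=0$, and the extraction of $\gamma_2$ by comparing the $y^2z^2$ coefficient in (\ref{detergreedymain}) are all sound and match the stated formulas.

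The genuine gap is in (\ref{centroid4by4}), which is exactly the part the paper regards as the content of the corollary. You reduce it to exhibiting two vectors $u,v\in\C^4$ with $\frac12\zav{\frac{u^\star Au}{u^\star u}+\frac{v^\star Av}{v^\star v}}=\chi$ and then concede that the matching of the cubic term $\sum_{i<k<j}N_{ik}N_{kj}\overline{N_{ij}}$ -- ``where essentially all of the bookkeeping resides'' -- is left undone. That bookkeeping \emph{is} the proof, and your equal-weight two-vector ansatz is both unverified and more rigid than necessary: nothing guarantees that $\chi$ splits as an unweighted average of two Rayleigh quotients built from the entries of $N$. The paper instead writes $\chi$ as a convex combination of \emph{four} Rayleigh quotients with \emph{unequal} weights: for each of the $\binom{4}{3}=4$ triples $j<k<l$ it forms the vector supported on coordinates $j,k,l$ with entries $\zav{a_{k,l},\,-a_{j,l},\,a_{j,k}}$ (in the paper's notation $u_1=(0,d_1,-c_2,c_1)^T$, $u_2=(d_1,0,-b_3,b_2)^T$, $u_3=(c_2,-b_3,0,b_1)^T$, $u_4=(c_1,-b_2,b_1,0)^T$). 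The alternating sign pattern makes each $u_j^\star A u_j$ produce the diagonal contributions $\lambda_j|a_{k,l}|^2+\lambda_k|a_{j,l}|^2+\lambda_l|a_{j,k}|^2$ plus exactly one cross term $-a_{j,k}a_{k,l}\overline{a_{j,l}}$, so that summing over the four triples gives $\sum_j u_j^\star u_j=2t_1(A)$ and $\sum_j u_j^\star A u_j=\tr(A)t_1(A)-t_2(A)$; hence $\chi=\sum_j \gamma_j\frac{u_j^\star Au_j}{u_j^\star u_j}$ with $\gamma_j=\frac{u_j^\star u_j}{2t_1(A)}$, $\sum_j\gamma_j=1$, and convexity of $W(A)$ finishes the argument. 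Until you either produce your two vectors or adopt a construction of this kind, the last claim remains unproved.
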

\begin{proof}
The only nontrivial fact here is (\ref{centroid4by4}). From (\ref{lambda2jformula}) we have
$$
\chi=\frac12\zav{\tr(A)-\frac{t_2(A)}{t_1(A)}}.
$$
Without loss of generality we may assume that $A$ is upper triangular
$$
A=\zav{\begin{array}{cccc}
\lambda_1 & b_1 & b_2 & b_3 \\
0 & \lambda_2 & c_1 & c_2 \\
0 & 0 & \lambda_3 & d_1 \\
0 & 0 & 0 & \lambda_4
\end{array}}.
$$ 
Define the following column vectors $u_1, u_2, u_3, u_4$
$$
u_1:=\zav{\begin{array}{c} 0 \\ d_1 \\ -c_2 \\ c_1 \end{array}},\qquad
u_2:=\zav{\begin{array}{c} d_1 \\ 0 \\ -b_3 \\ b_2 \end{array}},\qquad
u_3:=\zav{\begin{array}{c} c_2 \\ -b_3 \\ 0 \\ b_1 \end{array}},\qquad
u_4:=\zav{\begin{array}{c} c_1 \\ -b_2 \\ b_1 \\ 0 \end{array}}.
$$
%
Simple calculation shows that 
$$
\sum_{j=1}^4 u_j^\star u_j=2t_1(A)
$$
and
$$
\sum_{j=1}^4 u_j^\star A u_j= \tr(A)t_1(A)-t_2(A)=2t_1(A)\chi.
$$
Thus 
$$
\chi= \sum_{j=1}^4 \frac{u_j^\star A u_j}{u_j^\star u_j}\gamma_j,\qquad \gamma_j:=\frac{u_j^\star u_j}{2 t_1(A)}
$$
is a convex combination of elements of $W(A)$ and therefore $\chi\in W(A)$ itself. 
\end{proof}

\begin{example}
It is no longer true that when some $\chi_\pm$ coincides with an eigenvalue the numerical range is essentially reduced into a lower dimensional one. 
Consider
$$
A_4:=\zav{\begin{array}{cccc}
1-\imag & 0 & -2-\imag & 2+\imag \\
0 & -1-\imag & 1 &-\imag\\
0 & 0 & 1 & 2-2\imag \\
0 & 0 & 0 & -1 
\end{array}}.
$$
Clearly the eigenvalues of $A$ are $\pm 1$ and $\pm 1-\imag$. And little bit of calculation shows that $\chi_\pm=\pm 1-\imag$ since the equation (\ref{antipoleeq}) reads
$$
20(x^2+2\imag x-2)=0.
$$
Yet, the curve $\partial W(A)$ is not an ellipse with foci at $\pm 1$. To see that, observe
$$
W(\Im(A))=[-2.122191853\dots, 2.222569862\dots].
$$ 
Therefore $W(A)$ is not even symmetric with respect to $x$-axis. See Figure~\ref{4by4counterexample}.
\begin{figure}[ht] 
\centering 
\includegraphics[scale=0.5,trim=2cm 7cm 2cm 4cm,clip]{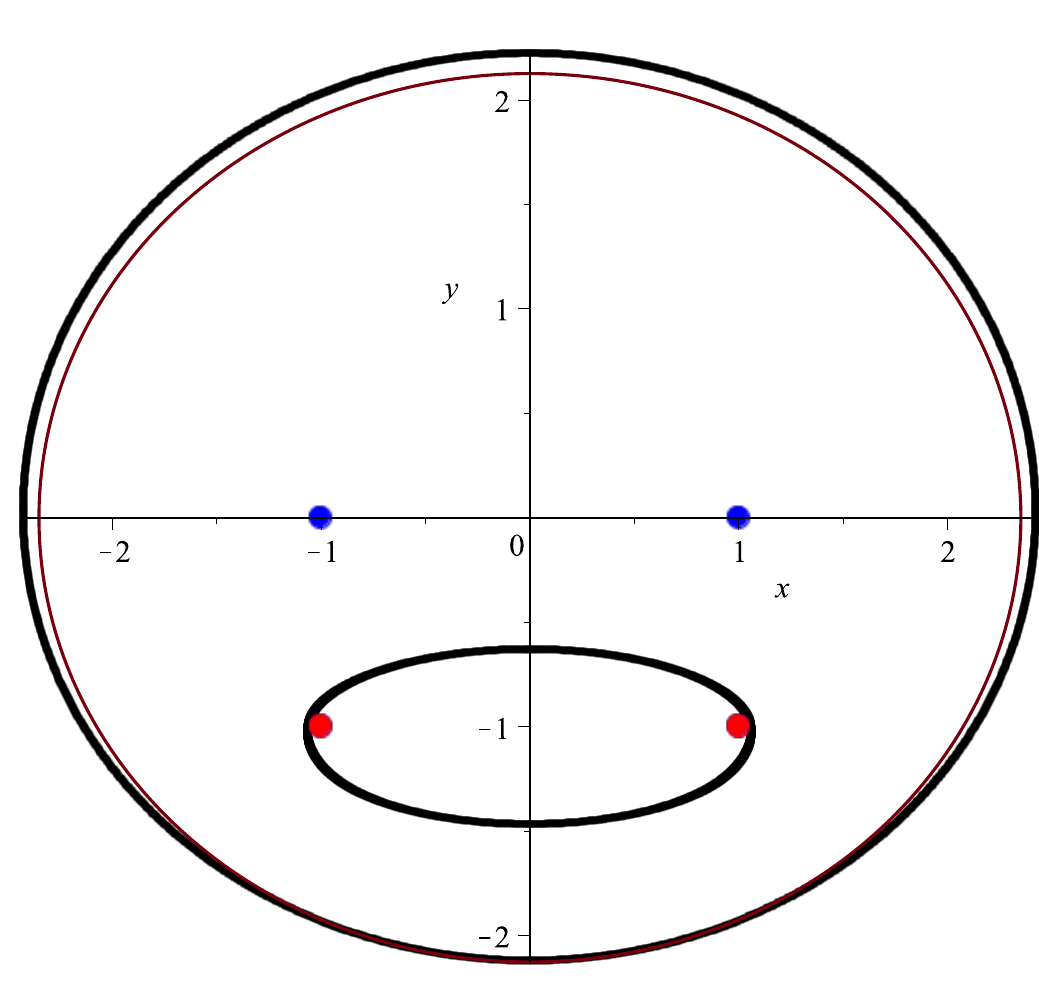}
\caption{The Kippenhahn curve of the matrix $A_4$ with eigenvalues at $\pm1$ whose remaining eigenvalues $-\imag\pm 1$ coincides with the numbers $\chi_\pm$. Trial ellipse with foci at $\pm 1$ shows that the numerical range does not reduce to an ellipse in this case.}
\label{4by4counterexample}
\end{figure}
\end{example}

\begin{definition}
Let $A\in\mathbb{C}^{4\times 4}$. In the notation of Corollary~\ref{4by4case} the \textit{inner conic} of $A$ is the singular solution of
\begin{equation}
t_1(A)p(\chi_+)p(\chi_-)=\frac{\gamma_2}{4}.
\end{equation}
\end{definition}
The analog of Corollary~\ref{innerlines3} is the following.
\begin{corollary}\label{innerlines4}
Let $A\in\mathbb{C}^{4\times 4}$. 
Any tangent of the inner conic of $A$ that passes through one of the eigenvalues is also tangent to the Kippenhahn curve.
\end{corollary}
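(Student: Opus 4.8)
The plan is to follow the template of Corollary~\ref{innerlines3} almost verbatim, exploiting the fact that a line is a tangent of a coval precisely when it is a line-solution of the underlying coval equation. First I would fix a line $\ell$ that is tangent to the inner conic of $A$ and passes through one of the eigenvalues, say $\lambda_1$. Since $\ell$ runs through $\lambda_1$, its pedal coordinate satisfies $p(\lambda_1)=0$, so the entire left-hand side $p(\lambda_1)p(\lambda_2)p(\lambda_3)p(\lambda_4)$ of the $4\times 4$ coval equation (\ref{covalpa4}) vanishes along $\ell$.

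Next I would invoke the definition of the inner conic as the singular solution of $t_1(A)p(\chi_+)p(\chi_-)=\frac{\gamma_2}{4}$. By the duality between a conic and its pencil of tangents (the envelope of the line-solutions is the singular solution, and conversely every tangent of the envelope is itself a line-solution), the line $\ell$, being tangent to the inner conic, must satisfy that same coval relation; that is, $t_1(A)p(\chi_+)p(\chi_-)=\frac{\gamma_2}{4}$ holds with the pedal coordinates evaluated along $\ell$. Substituting this into the right-hand side of (\ref{covalpa4}) yields
$$
\frac{t_1(A)}{4}p(\chi_+)p(\chi_-)-\frac{\gamma_2}{16}=\frac14\cdot\frac{\gamma_2}{4}-\frac{\gamma_2}{16}=0,
$$
so both sides of (\ref{covalpa4}) vanish along $\ell$ and hence $\ell$ is a solution of $\deter{p(A)}=0$. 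Finally, by Proposition~\ref{numrangecopolar} (equivalently, by the Kippenhahn duality noted thereafter) a line whose pedal data satisfy $\deter{p(A)}=0$ is a tangent of the Kippenhahn curve, which is exactly the assertion.

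The algebraic cancellation above is routine; the step that needs care is the two-way passage between ``tangent of a coval's singular solution'' and ``line-solution of the coval equation'', which is precisely the phenomenon flagged in Remark~\ref{remark10} and must be used once for the inner conic (tangent $\Rightarrow$ solution) and once for the Kippenhahn curve (solution $\Rightarrow$ tangent). I expect the only genuine obstacle to be the degenerate configurations: if $t_1(A)=0$ (normal $A$), or if the inner conic degenerates (e.g. $\gamma_2=0$, $\chi_\pm$ colinear with an eigenvalue, or a pole $\chi_\pm$ at infinity), the envelope may cease to be an honest conic and the tangent/solution correspondence would have to be re-examined. In the generic nondegenerate case the substitution settles the statement immediately, and the degenerate cases can be recovered by the same continuity/limit argument already employed in the proof of Theorem~\ref{maint}.
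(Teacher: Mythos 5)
Your argument is correct and is precisely the computation the paper leaves implicit (its printed proof of Corollary~\ref{innerlines4} reads simply ``Obvious''): passing through an eigenvalue forces $p(\lambda_j)=0$ and kills the left side of (\ref{covalpa4}), tangency to the inner conic gives $t_1(A)p(\chi_+)p(\chi_-)=\frac{\gamma_2}{4}$ so the right side becomes $\frac{\gamma_2}{16}-\frac{\gamma_2}{16}=0$, and a line solution of $\deter{p(A)}=0$ is a tangent of the Kippenhahn curve by the coval/envelope duality of Remark~\ref{remark10}. Your additional caution about degenerate configurations goes beyond what the paper records, but the approach is the same.
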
 
\begin{proof}
Obvious.
\end{proof}
\begin{example}
Consider
$$
A_3:=\left( \begin {array}{cccc} -{\frac {33}{10}}-{\frac {21\imag}{10}}&{
\frac {79}{30}}-{\frac {14\imag}{5}}&\frac95+{\frac {77\imag}{30}}&-{\frac {11
}{6}}+{\frac {19\imag}{6}}\\ \noalign{\medskip}0&{\frac {17}{15}}-{
\frac {89\imag}{30}}&{\frac {11}{6}}+{\frac {23\imag}{10}}&\frac75+{\frac {19
\imag}{30}}\\ \noalign{\medskip}0&0&{\frac {12}{5}}-\frac{\imag}{10}&{\frac {23}{10}
}-\frac{5\imag}{2}\\ \noalign{\medskip}0&0&0&{\frac {16}{5}}+{\frac {13\imag}{5}}
\end {array} \right).
$$
The implications of Corollary~\ref{innerlines4} are depicted in Figure~\ref{innerconicf}.
\begin{figure}[ht] 
\centering 
\includegraphics[scale=0.4,trim=2cm 3cm 2cm 7cm,clip]{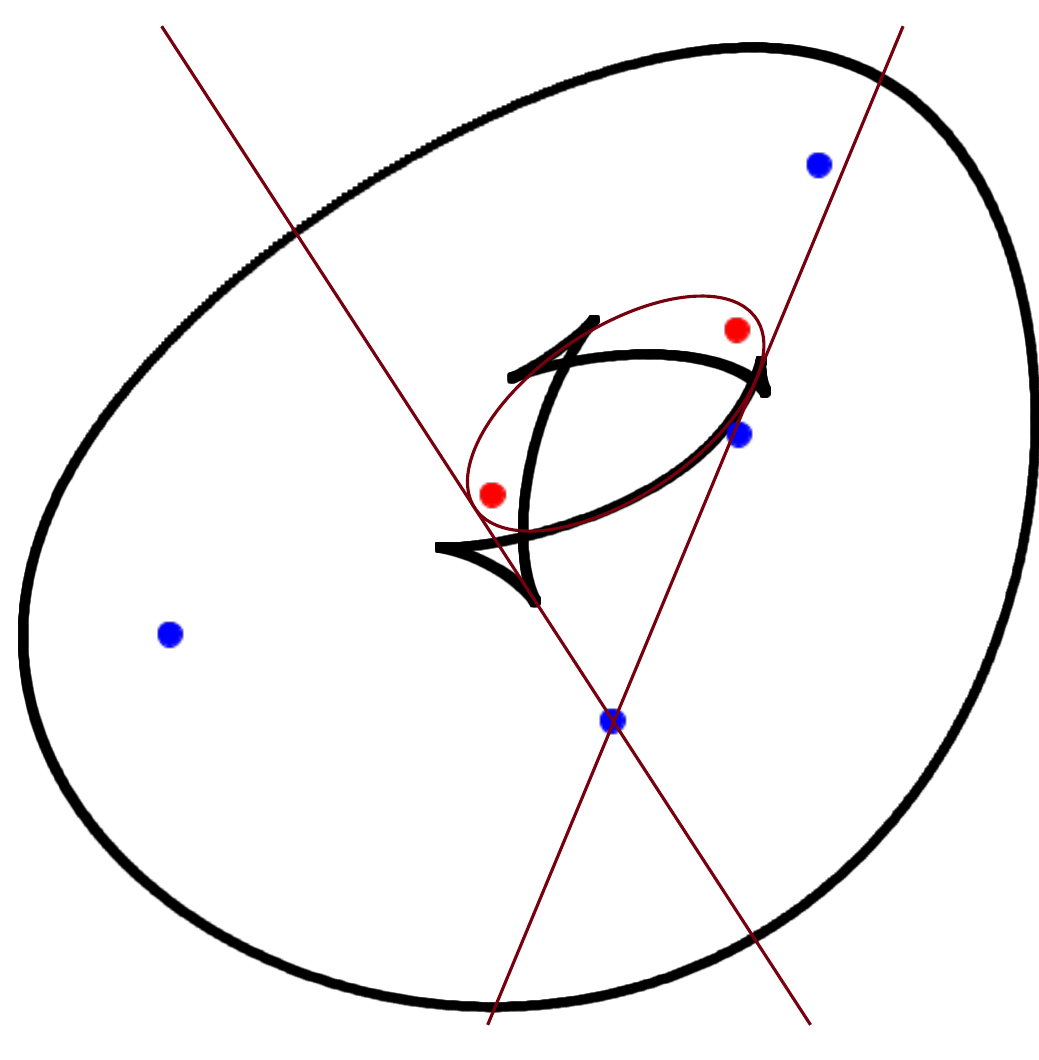}
\caption{The Kippenhahn curve of the matrix $A_3$ and its inner conic. The tangent lines of the inner conic passing through one eigenvalue are also tangents of the Kippenhahn curve.}
\label{innerconicf}
\end{figure}
\end{example}
\begin{example}
A geometrical construction of the inner conic can be obtained in some special cases. Consider
$$
A:=\zav{\begin{array}{cccc} 
\lambda_1 & a & 0 & 0 \\
0 &\lambda_2 & 0 & 0  \\
0& 0& \lambda_3 & b \\
0&0& 0& \lambda_4
\end{array}},\qquad a,b>0.
$$
It is obvious that the numerical range $W(A)$ is just the convex hull of two ellipses. Particularly
$$
\deter{p(A)}=\zav{p(\lambda_1)p(\lambda_2)-a^2}\zav{p(\lambda_3)p(\lambda_4)-b^2}
$$
From Corollary~\ref{4by4case} we also have
$$
\deter{p(A)}=p(\lambda_1)p(\lambda_2)p(\lambda_3)p(\lambda_4)-\frac{t_1(A)}{4}p(\chi_+)p(\chi_-)+\frac{\gamma_2}{16}.
$$
Assuming that the two ellipses are sufficiently distant form one another we can draw 8 lines:
\begin{align*}
p(\lambda_1)&=0, & p(\lambda_3)p(\lambda_4)&=b^2, &\text{two tangents of the second ellipse through $\lambda_1$}\\
p(\lambda_2)&=0, & p(\lambda_3)p(\lambda_4)&=b^2, &\text{two tangents of the second ellipse through $\lambda_2$}\\
p(\lambda_3)&=0, & p(\lambda_1)p(\lambda_2)&=a^2, &\text{two tangents of the first ellipse through $\lambda_3$}\\
p(\lambda_4)&=0, & p(\lambda_1)p(\lambda_2)&=a^2, &\text{two tangents of the first ellipse through $\lambda_4$}.
\end{align*}
All these lines are tangent to the Kippenhahn curve $\deter{p(A)}=0$ and to the inner conic:
$$
t_1(A)p(\chi_+)p(\chi_-)=\frac{\gamma_2}{4}.
$$
Since any conic section is determined uniquely by 5 tangents, this is more than enough to construct the inner conic. See Figure~\ref{twoellipsenumrange}. This example also show that the se numbers $\chi_\pm$ need not belong to the numerical range $\chi_\pm\not\in W(A)$.
\begin{figure}[ht] 
\centering 
\includegraphics[scale=0.75,trim=1cm 1cm 1cm 1cm,clip]{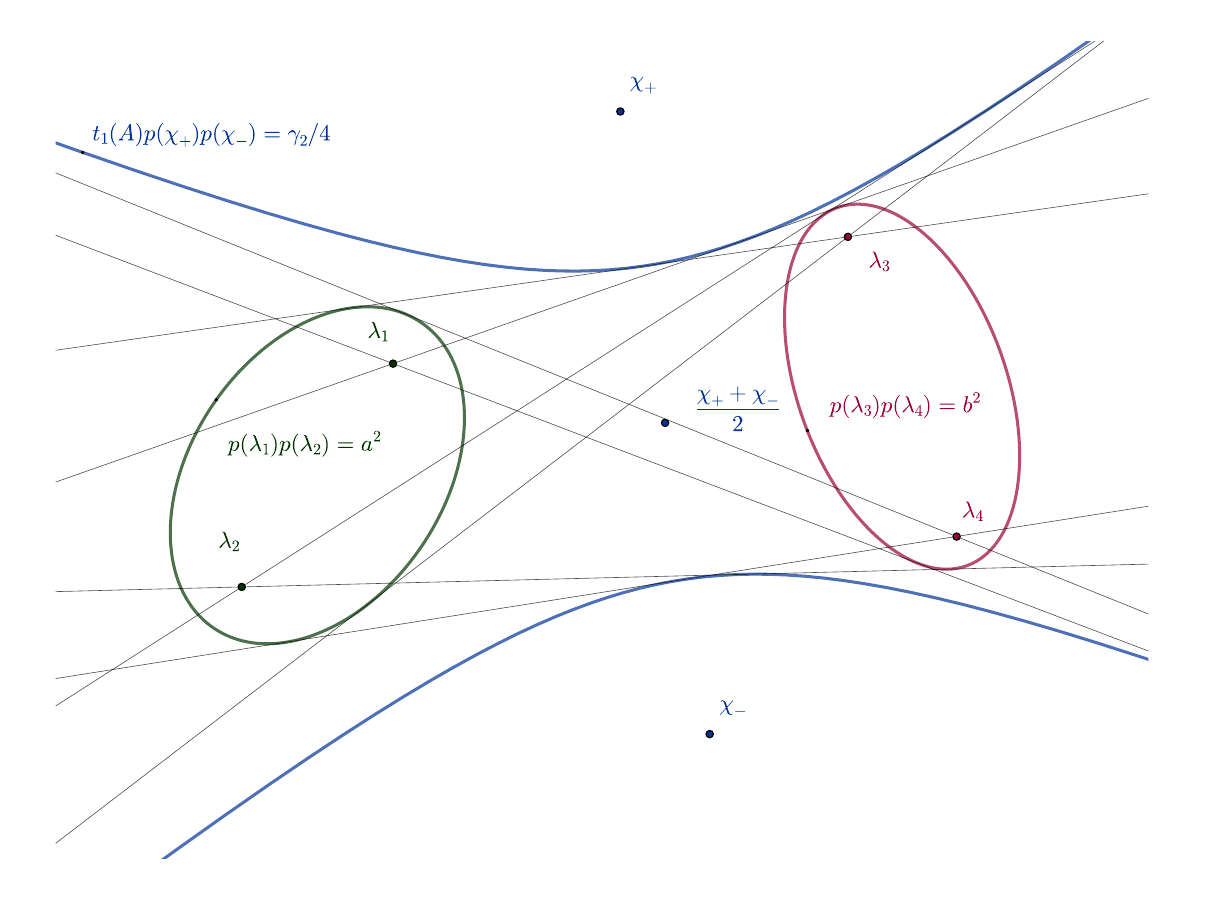}
\caption{Eight tangents of two ellipses determines uniquely the inner conic.}
\label{twoellipsenumrange}
\end{figure}

\end{example} 
\begin{example}
If the two ellipses in the previous example are not sufficiently apart we can still construct the inner ellipse using the following trick: We scale down the semi-minor axes $a,b$ by a factor of $\alpha>0$ and draw lines to these ellipses. Specifically
 \begin{align*}
p(\lambda_1)&=0, & p(\lambda_3)p(\lambda_4)&=\alpha b^2,\\
p(\lambda_2)&=0, & p(\lambda_3)p(\lambda_4)&=\alpha b^2,\\
p(\lambda_3)&=0, & p(\lambda_1)p(\lambda_2)&=\alpha a^2,\\
p(\lambda_4)&=0, & p(\lambda_1)p(\lambda_2)&=\alpha a^2.
\end{align*}
For sufficiently small $\alpha$ we have again 8 lines (unless $\lambda_1$ is not located in the complex segment $[\lambda_3,\lambda_4]$ and similarly for other eigenvalues). This time the lines are not tangent of the inner conic but uniquely determine a different conic
$$
t_1(A)p(\chi_+)p(\chi_-)=4a^2b^2(\alpha-1)+\frac{\gamma_2}{4},
$$
with the \textit{same} foci $\chi_\pm$. Once the numbers $\chi_\pm$ are obtained we can construct the inner ellipse computing its semi-minor axes $\gamma_2/(4t_1(A))$. See Figure~\ref{twoellipsenumrangea}.
\begin{figure}[ht] 
\centering 
\includegraphics[scale=0.75,trim=1cm 1cm 1cm 1cm,clip]{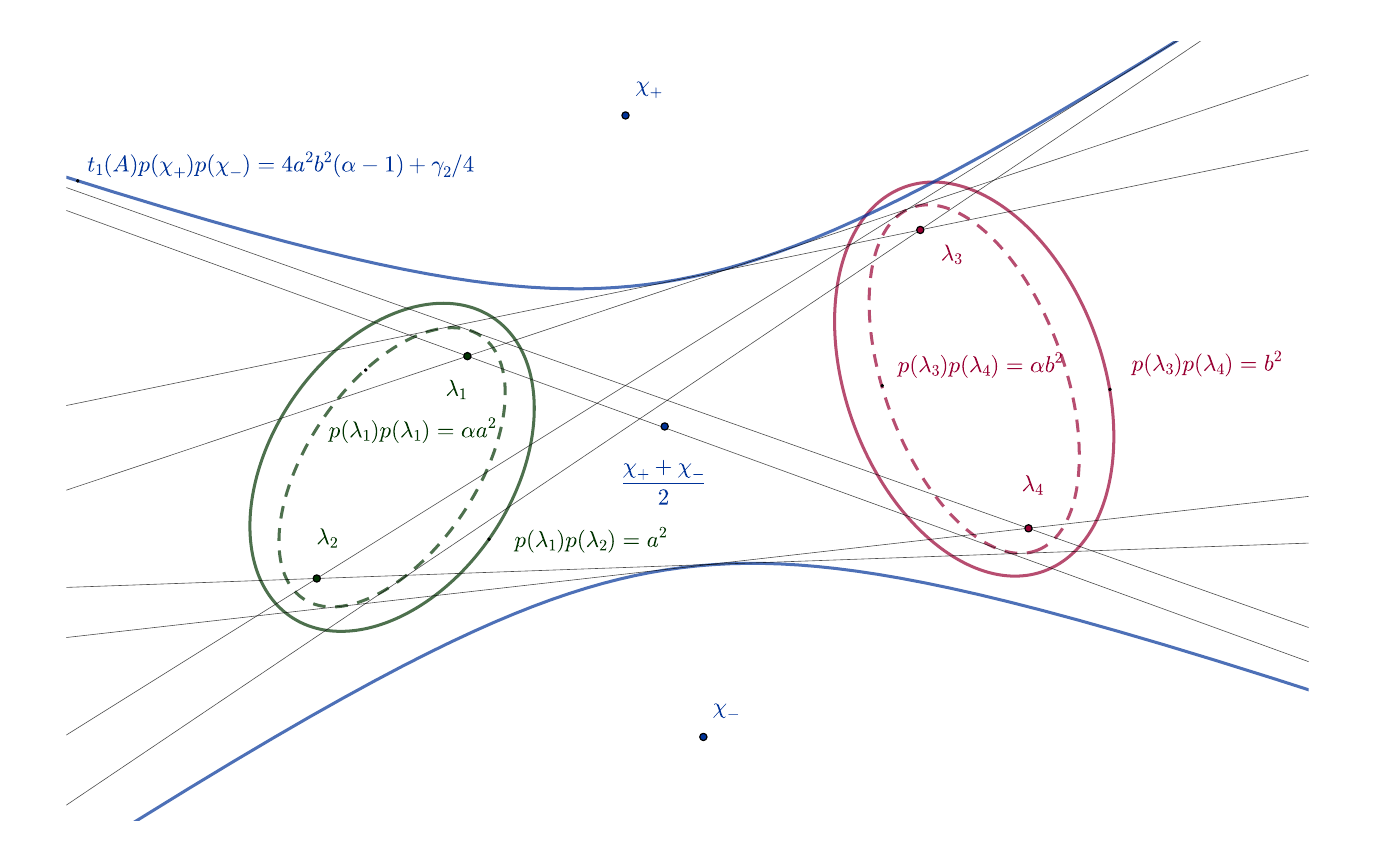}
\caption{Eight tangents of two scaled down ellipses can be also used.}
\label{twoellipsenumrangea}
\end{figure}
\end{example}

\subsection{Secondary values}
\begin{definition}
Let $A\in\mathbb{C}^n$, where $n\geq 3$. A \textit{secondary value of $A$} $x\in\mathbb{C}$ is a solution of 
\begin{equation}\label{antipoleeq}
\tr \zav{{\rm adj}(xI-A)(A^\star -\bar A)}=0.
\end{equation}
\end{definition}
\begin{theorem}\label{T2}Let $A\in\mathbb{C}^{n\times n}$ has the following block form
$$
A=\zav{\nadsebou{\lambda}{0}\nadsebou{b^\star}{D}},
$$
where $\lambda\in\mathbb{C}$ is an eigenvalue of $A$, $b\in \mathbb{C}^{(n-1)\times 1}$, $D\in\mathbb{C}^{(n-1)\times(n-1)}$.
\begin{enumerate}
\item If $b=0$ then $\lambda$ is also a secondary value of $A$.
\item If $\lambda\not\in W(D)$ and it is a secondary value, then $b=0$.
\end{enumerate}
\end{theorem}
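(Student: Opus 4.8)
The plan is to reduce both statements to one algebraic identity for the secondary-value function
\[
g(x):=\tr\bigl({\rm adj}(xI-A)(A^\star-\bar A)\bigr),
\]
evaluated at $x=\lambda$, exploiting the block-triangular shape of $A$. First I would determine the relevant blocks of $\bar A$. The first column of $A$ is $(\lambda,0,\dots,0)^\top$, so $Ae_1=\lambda e_1$, and the defining property of $\bar A$ then forces $\bar A e_1=\bar\lambda e_1$; hence the first column of $\bar A$ is $\bar\lambda e_1$. Combined with $A^\star=\left(\begin{smallmatrix}\bar\lambda & 0\\ b & D^\star\end{smallmatrix}\right)$ this yields
\[
A^\star-\bar A=\begin{pmatrix} 0 & c^\star\\ b & D^\star-\bar D\end{pmatrix},
\]
where $c^\star$ is the (possibly non-unique, in the degenerate case) upper-right block of $\bar A$.

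For the adjugate I would use the standard formula for the inverse of a block-triangular matrix together with $\det(xI-A)=(x-\lambda)\det(xI-D)$ to get
\[
{\rm adj}(xI-A)=\begin{pmatrix}\det(xI-D) & b^\star\,{\rm adj}(xI-D)\\ 0 & (x-\lambda)\,{\rm adj}(xI-D)\end{pmatrix}.
\]
Block-multiplying these two matrices and taking the trace, the undetermined block $c^\star$ lands only in an off-diagonal block of the product and therefore never contributes to the trace; this is exactly why (\ref{antipoleeq}) is well defined despite the ambiguity of $\bar A$. I would thus obtain the key identity
\[
g(x)=b^\star\,{\rm adj}(xI-D)\,b+(x-\lambda)\,\tr\bigl({\rm adj}(xI-D)(D^\star-\bar D)\bigr),
\]
and in particular $g(\lambda)=b^\star\,{\rm adj}(\lambda I-D)\,b$.

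Statement (1) is then immediate: if $b=0$ then $g(\lambda)=0$, so $\lambda$ solves (\ref{antipoleeq}) and is a secondary value. For statement (2) I argue as follows. Since $\lambda\notin W(D)\supseteq\sigma(D)$, the matrix $M:=\lambda I-D$ is invertible and ${\rm adj}(M)=\det(M)M^{-1}$ with $\det(M)\neq0$; hence the hypothesis $g(\lambda)=0$ forces $b^\star M^{-1}b=0$. Now $\lambda\notin W(D)$ means $0\notin W(M)=\lambda-W(D)$, so $v^\star Mv\neq0$ for every $v\neq0$. Setting $v:=M^{-1}b$ gives
\[
b^\star M^{-1}b=(Mv)^\star v=v^\star M^\star v=\overline{v^\star M v},
\]
which vanishes only when $v=0$, i.e. when $b=Mv=0$. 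This is the desired conclusion.

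The step that needs the most care is the bookkeeping of $\bar A$ in the degenerate case: one must verify that the defining properties genuinely pin the first column of $\bar A$ to $\bar\lambda e_1$ — this is what places $b$ (and nothing else) in the lower-left block of $A^\star-\bar A$ — while checking that the residual freedom in $c^\star$ is annihilated by the trace, so that the identity for $g$ holds for every admissible choice of $\bar A$. Once that is settled, the block-adjugate computation and the final inversion trick $b^\star M^{-1}b=\overline{v^\star Mv}$ are routine, and the whole argument rests on the single fact $0\notin W(\lambda I-D)$.
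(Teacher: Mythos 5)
Your proposal is correct and follows essentially the same route as the paper: both rest on the identity $\tr\bigl({\rm adj}(xI-A)(A^\star-\bar A)\bigr)=b^\star{\rm adj}(xI-D)b+(x-\lambda)\tr\bigl({\rm adj}(xI-D)(D^\star-\bar D)\bigr)$ evaluated at $x=\lambda$, followed for part (2) by the substitution $v=(\lambda I-D)^{-1}b$, the conjugation trick $b^\star(\lambda I-D)^{-1}b=\overline{v^\star(\lambda I-D)v}$, and the fact that $0\notin W(\lambda I-D)$. The only difference is cosmetic: the paper asserts the block identity without derivation and phrases part (2) as a contradiction, whereas you derive the identity explicitly (correctly noting that the ambiguous block of $\bar A$ is killed by the trace, and in fact drops out entirely at $x=\lambda$) and argue directly.
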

\begin{remark}
In other words, the matrix $A$ decomposes into a direct sum of $\lambda$ and $D$ (i.e. $b=0$) only if $\lambda$ coincides with some secondary value. This condition is necessary but not sufficient. For that we need $\lambda$ being outside the numerical range of $D$. 
\end{remark}
\begin{proof}
The block form of $A$ implies
$$
\tr{\rm adj}\zav{x I-A}(A^\star-\bar A)=b^\star {\rm adj}\zav{x I- D} b+(x-\lambda) \tr{\rm adj}\zav{x I-D}(D^\star-\bar D).
$$
Therefore
$$
\tr{\rm adj}\zav{\lambda I-A}(A^\star-\bar A)=b^\star {\rm adj}\zav{\lambda I- D} b.
$$
For $b=0$ we thus have
$$
\tr{\rm adj}\zav{\lambda I-A}(A^\star-\bar A)=0,
$$
proving the first claim.

For the remaining claim, suppose $b\not=0$. Since $\lambda \not \in W(D)$ the eigenvalue $\lambda$ is simple and $\lambda I-D$ is an invertible matrix.

Denote $c:=(\lambda I-D)^{-1} b\not=0$ so that  $b=(\lambda I-D) c$ and $b^\star=c^\star \zav{\bar \lambda I-D^\star}$. We have
$$
0=\tr {\rm adj}\zav{\lambda I-A}(A^\star-\bar A)=b^\star {\rm adj}\zav{\lambda I- D} b
=\abs{\lambda I-D}c^\star \zav{\bar \lambda I-D^\star} c.
$$  
Discarding the non-zero determinant and taking the complex conjugate of both sides of the equation we obtain
$$
0=c^\star (\lambda I-D) c,
$$
rearranging
$$
\lambda=\frac{c^\star D c}{c^\star c}.
$$
Therefore $\lambda\in W(D)$. A contradiction. 
\end{proof}

\begin{proposition}The centroid of the secondary values is inside the numerical range. More precisely,
let $\chi_1,\dots, \chi_{n-2}$ be the secondary values of $A\in\mathbb{C}^n$, i.e. solutions of (\ref{antipoleeq}). Then
\begin{equation}
\chi:=\frac{\chi_1+\dots+\chi_{n-2}}{n-2}\in W(A).
\end{equation}
\end{proposition}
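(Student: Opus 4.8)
The plan is to reduce the statement to a convex-combination-of-Rayleigh-quotients argument, generalizing verbatim the constructions already used for $n=3$ in (\ref{chiexplicit}) and for $n=4$ in (\ref{centroid4by4}). First I would compute the centroid in closed form. By the explicit expansion (\ref{anticharexplicit}), the secondary-value polynomial $P(x):=\tr{\rm adj}(xI-A)(A^\star-\bar A)$ has degree $n-2$, with leading coefficient $t_1(A)$ (the term $j=1,k=0$) and next coefficient $t_2(A)-t_1(A)\tr A$ (the terms $j=1,k=1$ and $j=2,k=0$). By Vieta's formulas the sum of its roots $\chi_1,\dots,\chi_{n-2}$ equals $\tr A-t_2(A)/t_1(A)$, so that
$$
(n-2)\,t_1(A)\,\chi=t_1(A)\tr A-t_2(A),
$$
which for $n=3,4$ reproduces (\ref{lambda2formula}) and (\ref{lambda2jformula}). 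This of course presupposes $t_1(A)\neq 0$, which is automatic whenever $n-2$ secondary values actually exist.

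Next, since every quantity involved is unitarily invariant, I would place $A$ in upper-triangular Schur form with eigenvalues $\lambda_i$ on the diagonal and entries $a_{ij}$ for $i<j$ above it, so that $t_1(A)=\sum_{i<j}|a_{ij}|^2$ while a direct computation (exactly as in the $3\times 3$ case) gives $t_2(A)=\sum_{i<k}(\lambda_i+\lambda_k)|a_{ik}|^2+\sum_{i<j<k}a_{ij}a_{jk}\overline{a_{ik}}$. The crucial step, generalizing the vectors $(c_1,-b_2,b_1)$ and $u_1,\dots,u_4$, is to attach to each triple $\{i<j<k\}$ the vector $w_{ijk}$ supported on coordinates $i,j,k$ with entries $a_{jk},\,-a_{ik},\,a_{ij}$. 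A short calculation then yields
$$
w_{ijk}^\star w_{ijk}=|a_{ij}|^2+|a_{ik}|^2+|a_{jk}|^2,\qquad
w_{ijk}^\star A\, w_{ijk}=\lambda_i|a_{jk}|^2+\lambda_j|a_{ik}|^2+\lambda_k|a_{ij}|^2-a_{ij}a_{jk}\overline{a_{ik}},
$$
where the degree-three off-diagonal contributions telescope to a single term.

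Finally I would sum over all triples. Each pair $\{p,q\}$ lies in exactly $n-2$ triples, hence $\sum_{i<j<k}w_{ijk}^\star w_{ijk}=(n-2)t_1(A)$; and reorganizing the diagonal contributions by (pair, external index) together with the telescoped cubic terms produces precisely $\sum_{i<j<k}w_{ijk}^\star A\, w_{ijk}=t_1(A)\tr A-t_2(A)=(n-2)t_1(A)\chi$. Therefore
$$
\chi=\frac{\sum_{i<j<k}w_{ijk}^\star A\, w_{ijk}}{\sum_{i<j<k}w_{ijk}^\star w_{ijk}},
$$
which exhibits $\chi$ as a convex combination, with nonnegative weights $w_{ijk}^\star w_{ijk}/\big((n-2)t_1(A)\big)$ summing to one, of the Rayleigh quotients $w_{ijk}^\star A\, w_{ijk}/w_{ijk}^\star w_{ijk}\in W(A)$ (discarding the vanishing vectors, of which there is at least one survivor since $t_1(A)\neq 0$). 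Convexity of $W(A)$ from the Hausdorff--Toeplitz theorem then gives $\chi\in W(A)$.

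The main obstacle I expect is guessing the correct vectors $w_{ijk}$ and checking that the cubic off-diagonal part of $w_{ijk}^\star A\, w_{ijk}$ collapses to the single term $a_{ij}a_{jk}\overline{a_{ik}}$ matching $t_2(A)$; once the vectors are in hand, the combinatorial bookkeeping of summing over all triples is entirely routine.
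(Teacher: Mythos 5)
Your proposal is correct and follows essentially the same route as the paper's own proof: the same reduction of $\chi$ to $\bigl(t_1(A)\tr A-t_2(A)\bigr)/\bigl((n-2)t_1(A)\bigr)$ via (\ref{anticharexplicit}), the same Schur-form vectors supported on triples $i<j<k$ with entries $a_{jk},-a_{ik},a_{ij}$, the same counting that each pair lies in $n-2$ triples, and the same conclusion by convexity of $W(A)$. Your explicit handling of the vanishing vectors and of the hypothesis $t_1(A)\neq 0$ is a small tidiness improvement over the paper's write-up, but the argument is the same.
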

\begin{proof}
Note that by (\ref{anticharexplicit}) we have
$$
\chi=\frac{1}{n-2}\zav{\tr(A)-\frac{t_2(A)}{t_1(A)}}.
$$
We are going to show that $\chi$ is a convex combination of elements of $W(A)$. To that end assume that $A$ is upper triangular, i.e. 
$A_{j,k}=0$ for $j>k$. Denote its values $A_{j,k}=:a_{j,k}$ for $j<k$ and the eigenvalues by $\lambda_j:=A_{j,j}$.

For every triple of distinct natural numbers $j<k<l$ we define the following column vector $u_{jkl}\in \mathbb{C}^{n}$:
$$
\zav{u_{jkl}}_r=\left\{ \begin{array}{cc} 
a_{k,l} & r=j\\ 
-a_{j,l} & r=k\\ 
a_{j,k} & r=j\\
0 & \text{else} 
 \end{array} \right.
$$
There are exactly $\zav{\nadsebou{n}{3}}$ of such vectors each containing $3$ off-diagonal elements of $A$ out of $\zav{\nadsebou{n}{2}}$ possible.
Obviously
$$
u_{ijk}^\star u_{ijk}=|a_{jk}|^2+|a_{jl}|^2+|a_{kl}|^2.
$$
In the sum
$$
\sum_{j<k<l}u_{jkl}^\star u_{jkl}
$$
we have $3\zav{\nadsebou{n}{3}}$ summands and each off-diagonal element $|a_{j,k}|^2$ appears exactly $3\zav{\nadsebou{n}{3}}/\zav{\nadsebou{n}{2}}=(n-2)$ times. Therefore it holds
$$
\sum_{j<k<l}u_{jkl}^\star u_{jkl}=(n-2)t_1(A).
$$
Also
$$
u_{jkl}^\star A u_{jkl}=\lambda_j |a_{k,l}|^2+\lambda_k |a_{j,l}|^2+\lambda_{l} |a_{j,k}|^2-a_{j,k}a_{k,l}\overline{a_{j,l}}.
$$
Let us compute
\begin{align*}
\sum_{j<k<l}u_{jkl}^\star A u_{jkl}&=\sum_{j<k<l}\zav{\lambda_j |a_{k,l}|^2+\lambda_k |a_{j,l}|^2+\lambda_{l} |a_{j,k}|^2}-\sum_{j<k<l}a_{j,k}a_{k,l}\overline{a_{j,l}}\\
&=\zav{\sum_{j<k<l}+\sum_{k<j<l}+\sum_{k<l<j}}\lambda_j |a_{k,l}|^2-\sum_{j<k<l}a_{j,k}a_{k,l}\overline{a_{j,l}}\\
&=\zav{\sum_{j}\sum_{k<l}-\sum_{j=k<l}-\sum_{k<j=l}}\lambda_j |a_{k,l}|^2-\sum_{j<k<l}a_{j,k}a_{k,l}\overline{a_{j,l}}\\
&=\tr(A)t_1(A)-\sum_{k<l}\lambda_k |a_{k,l}|^2-\sum_{k<l}\lambda_l |a_{k,l}|^2-\sum_{j<k<l}a_{j,k}a_{k,l}\overline{a_{j,l}}\\
&=\tr(A)t_1(A)-\sum_{j\leq k\leq l}a_{j,k}a_{k,l}\overline{a_{j,l}}+\sum_{j=k= l}a_{j,k}a_{k,l}\overline{a_{j,l}}\\
&=\tr(A)t_1(A)-\tr\zav{A^2A^\star}+\tr \zav{A^2\bar A}\\
&=\tr(A)t_1(A)-t_2(A).
\end{align*}
Dividing bots sides by $(n-2)t_1(A)$ we obtain
$$
\chi=\sum_{j<k<l}\frac{u_{jkl}^\star A u_{jkl}}{u_{jkl}^\star u_{jkl}}\gamma_j,\qquad \gamma_j:=\frac{u_{jkl}^\star u_{jkl}}{(n-2)t_1(A)}.
$$
Since $\sum_j\gamma_j=1$ the number $\chi$ is a convex combination of elements of $W(A)$ and therefore $\chi\in W(A)$. 
\end{proof}

\section{Open problems}
\subsection*{Matrices with poles at infinity}
It seems plausible that there are matrices with greedy expansion of the form (\ref{covalpa}) that require one or more higher order poles $\lambda_j^{(k)}$, $k>2$ to be at infinity. But the author cannot find a specific example of this. The necessary condition is $\gamma_k=0$ for some $k\geq 2$ (in the notation of Theorem~\ref{maint}). 

The problem is that even $\gamma_2$ is, in general, an \textit{algebraic function } of the matrix entries (not merely polynomial as is the case for $\gamma_0$, $\gamma_1$) -- due to the interference of the secondary values -- and thus difficult to analyze.
 
\subsection*{Missing case of Theorem~\ref{T2}}
When an eigenvalue $\lambda$ is also a secondary value and furthermore it is outside $W(D)$ in the notation of Theorem~\ref{T2}, we know everything. What about the case $\lambda\in W(D)$? 

This easy to satisfy. Suppose
$$
\lambda=\frac{c^\star D c}{c^\star c},
$$
for some $c$, i.e. $\lambda\in W(D)$ and let $b=(\lambda I-D)c$. Denote
$$
A=\zav{\nadsebou{\lambda }{0}\nadsebou{b^\star}{D}}.
$$
Then $\lambda$ is both eigenvalue and secondary value of $A$, that is
$$
b^\star (\lambda I-D)^{-1}b=c^\star (\bar \lambda I-D^\star )c=\bar \lambda c^\star c-\overline{c^\star D c}=0,
$$
from the definition of $\lambda$. Yet
$$
\deter{p(A)}\not=p(\lambda)\deter{p(D)}.
$$
In fact
$$
\deter{p(A)}=p(\lambda)\deter{p(D)}-\frac14 b^\star{\rm adj}(p(D))b,
$$
and since the matrix $p(D)$ is positive definite (at least for $\partial W(A)$) -- and hence ${\rm adj}(p(d))$ is also positive definite -- the term $b^\star{\rm adj}(p(D))b>0$ cannot vanish.

Can we still draw some conclusions about numerical range of $A$ in this case?

\subsection*{Higher rank numerical ranges} The rank $k$ numerical range $\Lambda_k(A)$ is defined
$$
\Lambda_k(A):=\szav{\alpha\in\mathbb{C}: U^\star A U=\zav{\nadsebou{\alpha I_k}{*}\nadsebou{*}{*}}, \text{ for some unitary }U}.
$$
See e.g. \cite{rankkpaper1,rankkpaper2,rankkpaper3}. 
It is easy to see that $\Lambda_1(A)=W(A)$. The set $\Lambda_k(A)$ is also convex \cite{rankkpaper4}.
Equivalently \cite{rankkpaper5},
$$
\Lambda_k(A)=\szav{\alpha\in\mathbb{C}: e^{\imag \theta}\alpha+ e^{-\imag \theta}\bar \alpha\leq \lambda_k\zav{e^{\imag \theta} M+e^{-\imag \theta} M^\star}},
$$ 
where for a Hermitian matrix $H$ the symbol $\lambda_k(H)$ coresspond to the $k$-th largest eigenvalue of $H$.

What is a coval representation of $\partial \Lambda_k(A)$?

\subsection*{Properties of a ''2-normal'' matrix}
For a normal matrix $A$ we have 
$$
\deter{p(A)}=\prod_{j=1}^n p(\lambda_j),
$$
i.e. in our greedy expansion formula (\ref{covalpa}) only single term survives.

What are the properties of a matrix $A$ for which \textit{two} terms survive? 
\begin{definition}
We will call a matrix $A\in\mathbb{C}^{n\times n}$ \textit{2-normal} if
\begin{equation}\label{2normaldef}
\deter{p(A)}=\prod_{j=1}^n p(\lambda_j)-\frac{t_1(A)}{4}\prod_{j=1}^{n-2}p(\chi_j).
\end{equation}
\end{definition}
Two-normal matrices are very nice, geometrically, since their numerical range can be described using eigenvalues and the secondary values only.
Furthermore, the secondary values $\chi_j$, in this case, are truly behaving like an ``antiparticles'' to eigenvalues exactly the same way as in dimension 3 -- when one coincides with the other they ``annihilate'' and the numerical range is reduced to a lower dimensional one (plus a possible point). No further condition needed.

There is a simple counting argument from which we can get a rough idea how many 2-normal matrices there are. An upper triangular $n\times n$ complex matrix with fixed eigenvalues has precisely $n(n-1)/2$ free complex parameters, or $n^2-n$ free real parameters.

From (\ref{2normaldef}) we know that any line connecting an eigenvalue $\lambda$ with some secondary value $\chi$ is a solution to $\deter{p(A)}=0$. Therefore computing the tangential angle $\theta$ and the pedal coordinate $p$ of such line, we have
$$
\deter{p I-\frac{e^{\imag\theta}}{2\imag} A^\star+\frac{e^{-\imag \theta}}{2\imag} A}=0.
$$
A single (non-linear) equation in parameters of $A$.
There are $n(n-2) $ such lines, therefore we have $n(n-2)$ equations. Therefore -- roughly -- the space of 2-normal matrices should have at most $n$ real parameters.
\bigskip

In the literature, there exists so-called \textit{binormal matrices} -- see \cite{Ikramov}. A matrix $A$ is binormal if $A^\star A$, $AA^\star$  commute. 
The anonymous referee has suggested the following example of a 2-normal matrix which is not binormal. Consider
$$
A(b):=\zav{\begin{array}{cccc}
0 & 1 & b & 1 \\
0 & 0 & 1 & b \\
0 & 0 & 0 & 1 \\
0& 0 & 0 & 0
\end{array}}.
$$
Easy calculation shows that 
$$
\det{p(A(b))}=p^4-\frac{2|b|^2+4}{4}p(\chi_+)p(\chi_-)-\frac{\gamma_2}{16},
$$
where
$$
\chi_\pm:=\frac{-(b+\bar b)\pm \sqrt{b^2+{\bar b}^2-4}}{2|b|^2+4},
$$
and, crucially,
$$
\gamma_2=4-(b+\bar b)^2+|b|^4=b_2^4+2b_1^2b_2^2+(b_1^2-2)^2,\qquad b=b_1+\imag b_2.
$$
We can see that $\gamma_2\geq 0$ and $\gamma_2=0$ if and only if $b=\pm\sqrt{2}$. Interestingly, these values of $b$ are precisely those for which the secondary values coincides, i.e. $\chi_+=\chi_{-}$. This might be a coincidence, though.

\subsection*{False conjecture}
A natural conjecture one might have is that the following holds:

For all $A\in \mathbb{C}^{n\times n}$ there exists $B\in \mathbb{C}^{(n-2)\times (n-2)}$ such that
\begin{align}\label{falsecon}
\deter{p(A)}\stackrel{?}{=}\prod_{j=1}^n p(\lambda_j)-\frac{t_1(A)}{4}\deter{p(B)},
\end{align}
where $\lambda_j$ are eigenvalues of $A$.

Unfortunately, this is not the case in general. Consider the following $7\times 7$ matrix
$$
A:= \left( \begin {array}{ccccccc} -{\frac {27}{10}}+{\frac {46\,i}{15}}&
{\frac {14}{15}}+{\frac {89\,i}{30}}&-{\frac {67}{30}}+{\frac {27\,i}{
10}}&{\frac {23}{30}}+{\frac {17\,i}{30}}&{\frac {13}{5}}+5/6\,i&-5/6+
{\frac {19\,i}{30}}&-{\frac {17}{30}}-{\frac {17\,i}{6}}
\\ \noalign{\medskip}0&-9/5-{\frac {17\,i}{6}}&7/6-{\frac {43\,i}{30}}
&-{\frac {23}{15}}+{\frac {29\,i}{15}}&2/3+{\frac {34\,i}{15}}&{\frac 
{17}{10}}-3/2\,i&-2+{\frac {83\,i}{30}}\\ \noalign{\medskip}0&0&8/5+{
\frac {19\,i}{10}}&{\frac {16}{15}}-{\frac {19\,i}{30}}&{\frac {13}{30
}}-{\frac {17\,i}{6}}&-{\frac {37}{15}}+5/2\,i&{\frac {13}{15}}-{
\frac {79\,i}{30}}\\ \noalign{\medskip}0&0&0&-7/6-{\frac {12\,i}{5}}&{
\frac {13}{10}}-9/5\,i&2/3+{\frac {47\,i}{30}}&8/3-i/6
\\ \noalign{\medskip}0&0&0&0&7/6-i/3&{\frac {27}{10}}+3/2\,i&3/10+3/10
\,i\\ \noalign{\medskip}0&0&0&0&0&-{\frac {29}{30}}-{\frac {79\,i}{30}
}&-{\frac {23}{15}}-{\frac {41\,i}{30}}\\ \noalign{\medskip}0&0&0&0&0&0
&-1/15+3\,i\end {array} \right),
$$
and let
\begin{align*}
f(\theta)&:=|p(A)|-p(\lambda_1)\cdots p(\lambda_5)=\left|pI-\frac{e^{\imag \theta}}{2\imag}A^\star+\frac{e^{-\imag \theta}}{2\imag}A\right|-\left|pI-\frac{e^{\imag \theta}}{2\imag}\bar A+\frac{e^{-\imag \theta}}{2\imag}A\right|.
\end{align*}
We want to find $5\times 5$ matrix $B$ such that
$$
f(\theta)=\frac{t_1(A)}{4}\left|pI-\frac{e^{\imag \theta}}{2\imag}B^\star+\frac{e^{-\imag \theta}}{2\imag}B\right|,
$$ 
i.e. (up to a factor) $f(\theta)$ is a determinant of a Hermitian symmetric matrix whose eigenvalues are therefore all real. Thus all 5 roots of $f(\theta)$ (as a polynomial in $p$) must be real for all $\theta$.
But
\begin{align*}
f(-\pi/2)&=-{\frac {1629\,{p}^{5}}{50}}+{\frac {525707\,{p}^{4}}{6000}}+{\frac {
154689277\,{p}^{3}}{540000}}-{\frac {136887958013\,{p}^{2}}{194400000}
}-{\frac {29907897828361\,p}{46656000000}}+{\frac {393834343927013}{
279936000000}},\\
\intertext{has only three real roots.} 
\end{align*}
Still, the equality (\ref{falsecon}) might be true for \textit{some} matrices. Promising candidate are cyclic weighted shift matrices \cite{cyclicpaper1}. Or so-called "Unitary bordering matrices" \cite{UBM1,UBM2,UBM3}.

The question of existence of such a matrix $B$ is also equivalent to the condition: the Kippenhahn
polynomial $\deter{tI+\Re(A)v+\Im(A)w}$ is hyperbolic with respect to the vector $(t,u,w)=(1, 0, 0)$. See \cite{hyperbolicpaper} for the details and \cite[page 130]{hyperbolicpaper2} for a useful fact that a factor of a hyperbolic form is also hyperbolic.

\section{Acknowledgement}
The author would like to thank Ilya Spitkovsky and Piotr Pikul for valuable comments. The author is also indebted to the anonymous reviewer for many useful insights and observations.

Research supported by GA\v{C}R grant no. 25-18042S and RVO funding for I\v{C}O 67985840

\end{document}